\documentclass[12pt]{amsart}

\usepackage{fullpage}
\usepackage{xypic,hyperref,todonotes,comment}
\usepackage{graphics,amssymb,mathtools}
\usepackage{amsmath,amsthm,amssymb,mathrsfs}
\usepackage[matrix,arrow,curve]{xy}
\usepackage{verbatim}
\usepackage{url}
\usepackage{hyperref}
\hypersetup{ 
colorlinks,
citecolor=black,
filecolor=blue,
linkcolor=black,
urlcolor=magenta
}
\usepackage{wasysym}
\usepackage[T1]{fontenc}

\vfuzz2pt 
\hfuzz7pt 

 \textwidth=16.5cm
  \textheight=23cm
 \topmargin=-0.7cm
  \hoffset=-0.5cm
%

\numberwithin{equation}{subsection}

\newtheorem{Lemma}[equation]{Lemma}
\newtheorem{lemma}[equation]{Lemma}
\newtheorem{prop}[equation]{Proposition}
\newtheorem{theorem}[equation]{Theorem}
\newtheorem{coro}[equation]{Corollary}

\theoremstyle{definition}
\newtheorem{Def}[equation]{Definition}
\newtheorem{example}[equation]{Example}
\newtheorem{rema}[equation]{Remark}

\newcommand{\msc}[1]{\mathscr{{#1}}}

\newcommand{\mrm}[1]{\mathrm{#1}}
\newcommand{\mfk}[1]{\mathfrak{#1}}

\emergencystretch=100 pt
\newcommand{\Hom}{{\rm Hom}}

\newcommand{\End}{{\rm End}}

\newcommand{\Inn}{{\rm Inn}}
\newcommand{\Der}{{\rm Der}}
\newcommand{\DerC}{{{\rm Der}}_{{\mathscr{Q}}}(\Omega C)}
\newcommand{\DerP}{{{\rm Der}}_{{\mathscr{Q}}}(\Omega P)}
\newcommand{\Coder}{{\rm Coder}}

\newcommand{\Ext}{{\rm Ext}}
\newcommand{\HH}{{\rm HH}}

\newcommand{\bZ}{\mathbb{Z}}

\renewcommand{\le}{\leqslant}
\renewcommand{\ge}{\geqslant}

\newcommand{\ot}{\otimes}
\newcommand{\ox}{\otimes}
\newcommand{\kk}{\mathbf{k}}

\begin{document}
\title{$A_{\infty}$-coderivations and the Gerstenhaber bracket \\ on  Hochschild cohomology}
\author{C.~Negron}
\address{Department of Mathematics\\ University of North Carolina\\ Chapel Hill, NC 27599, USA}
\email{cnegron@email.unc.edu}

\author{Y.~Volkov}
\address{Department of Mathematics and Mechanics, Saint Petersburg State University, Saint Petersburg, Russia}
\email{wolf86\_666@list.ru}

\author{S.~Witherspoon}
\address{Department of Mathematics\\Texas A\&M University\\College Station, TX 77843,
USA}
\email{sjw@math.tamu.edu}

\date{7 September 2019}

\maketitle

\begin{abstract}
We show that 
Hochschild cohomology of an algebra over a field is a space
of infinity coderivations on an arbitrary projective bimodule
resolution of the algebra.
The Gerstenhaber bracket is the graded commutator of infinity coderivations.
We thus generalize, to an arbitrary resolution,
Stasheff's realization of the Gerstenhaber bracket on Hochschild
cohomology as the graded commutator of coderivations on the tensor
coalgebra of the algebra. 
\end{abstract}

\section{Introduction}

Stasheff~\cite{Stasheff} identified the Hochschild cohomology of an algebra $A$ with a space of coderivations on the tensor coalgebra of $A$ (cf.~\cite[Part II]{Quillen}). 
In the same article the author also gave an elegant interpretation of the
Gerstenhaber bracket as the graded commutator of these coderivations,
thus providing a theoretical framework for the explicit formula
of the bracket as given by Gerstenhaber~\cite{Gerstenhaber}
on the bar resolution of $A$. 

In this paper we generalize Stasheff's result to an arbitrary 
bimodule resolution $P$ of $A$ (see Theorem~\ref{mainthm} and Section~\ref{subsec:Stasheff}). 
In order to do this, we must consider higher operations on the resolution,
starting with a
diagonal map for $P$, that is, a chain map from $P$ to $P\ot_A P$. 
If $P$ is the bar resolution, the standard diagonal map
is coassociative.
In general a diagonal map is only coassociative up to homotopy,
and this is where higher operations begin to appear.
We show that $P$ may be given the structure of an $A_{\infty}$-coalgebra.
We show that Hochschild cohomology of $A$ may 
then be identified with the graded Lie algebra of outer $A_\infty$-coderivations on $P$.  
That is, it may be identified with the quotient of the 
space of $A_{\infty}$-coderivations on $P$
by the space of inner $A_{\infty}$-coderivations, and 
the Gerstenhaber bracket is induced by 
the graded commutator of $A_{\infty}$-coderivations.  
Thus the present paper contributes to a growing collection of literature concerning alternate interpretations of the Lie structure on Hochschild 
cohomology~\cite{hermann16,Keller-derived,Negron-Witherspoon,schwede98,Stasheff,suarezalvarez17,Volkov}, 
and subsequent applications (e.g.~\cite{Grimley,GNW,hermann16II,Keller-derived,meineletal18,negronwitherspoon17}).  For related constructions one may see also recent works of Herscovich and Tamaroff~\cite{Herscovich,Tamaroff,TamaroffII}.

We note that even when the higher structure on $P$ vanishes, i.e.\ when the $A_\infty$-coalgebra structure on $P$ simply reduces to a dg coalgebra structure, the space of $A_\infty$-coderivations on $P$ is strictly larger than the space of ordinary coderivations.  This occurs, for example, when $A$ is Koszul and $P$ is the Koszul resolution.  In that case there is a canonical dg coalgebra structure on $P$ induced by the algebra structure on the Koszul dual $A^!$, and yet it is strictly necessary to consider $A_\infty$-coderivations of $P$ in order to obtain the identification with Hochschild cohomology.

Many of the proofs in the present work are constructive, and the constructions of 
$A_{\infty}$-coderivations begin with the homotopy liftings
introduced by the second author~\cite{Volkov} to generalize
work of the first and third authors~\cite{Negron-Witherspoon}:
Given a Hochschild cocycle $f$ on $P$, we show how to extend it
to an $A_{\infty}$-coderivation $\{f_n\}$ for which $f_0=f$ and $f_1$
is a homotopy lifting of $f$ as defined in~\cite{Volkov}.
If one is interested only in expressing the Gerstenhaber bracket
of two cocycles on $P$ as another cocycle on $P$, one needs only
consider homotopy liftings, and not the full 
structure of infinity coderivations.
We include some small examples as illustrations;
other examples are given in~\cite{Volkov}.

The order of the main definitions and results is as follows.
In Section~\ref{sec:prelim}, we give general definitions of
$A_{\infty}$-coalgebras, $A_{\infty}$-coderivations, and a cup product. 
In Section~\ref{sec:proj-res}, we show that a projective bimodule
resolution $P$ of an algebra $A$ has an $A_{\infty}$-coalgebra structure
(Theorem~\ref{Acostruc}).
In Section~\ref{sec:main} we prove our main theorems:
For any Hochschild cocycle $f$ on $P$, there exists an
$A_{\infty}$-coderivation on $P$ whose 0-component is $f$ (Theorem~\ref{lift_exist}) and whose 1-component is a homotopy lifting for $f$.
As a consequence, 
Hochschild cohomology of $A$ is isomorphic to the Gerstenhaber algebra
of $A_{\infty}$-coderivations on $P$ modulo inner $A_{\infty}$-coderivations
(Theorem~\ref{mainthm}).
In Section~\ref{sec:conn-gr} we consider connected graded algebras
and in particular look closely at Koszul algebras as
one large class of examples.
Their Koszul resolutions have rich arrays of $A_{\infty}$-coderivations
for which we give explicit formulas.

For all of our definitions and results, we choose grading and
sign conventions to simplify computations.
In particular, we adopt the Koszul sign convention: 
If $f:M_1\rightarrow N_1$ and $g:M_2\rightarrow N_2$ are homogeneous morphisms of $\mathbb{Z}$-graded $A^e$-modules, then $f\ot_Ag:M_1\ot_A M_2\rightarrow N_1\ot_A N_2$ denotes the morphism defined by the equality 
\[
(f\ot_Ag)(x\ot_Ay)=(-1)^{|g||x|}f(x)\ot_Ag(y)
\]
for all homogeneous $x\in M_1, y\in M_2$.
As a consequence, $(1\ot_A g)(f\ot_A 1) = (-1)^{|f||g|} (f\ot_A g)$,
where $1$ always denotes an appropriate identity map.  For complexes of bimodules $X$ and $Y$, we let $\partial$ denote the differential on the Hom complex $\Hom_{A^e}(X,Y)$, $\partial(f)=d_Yf-(-1)^{|f|}fd_X$.

Throughout, $\kk$ will be a field and $A$ will be a unital $\kk$-algebra. 
We write $\ot=\ot_{\kk}$.
As usual,  $A^e=A\ot A^{op}$ denotes the enveloping algebra of $A$. 
We identify $A$-bimodules with (left) $A^e$-modules. 

\subsection*{Acknowledgements}
The first author was supported by NSF Postdoctoral Fellowship DMS-1503147. 
The second author was supported by RFBR according to the research project 17-01-00258, and by the President’s Program ``Support of Young Russian Scientists” (project number MK-2262.2019.1).
The third author was partially supported by NSF grants DMS-1401016 and DMS-1665286.  We thank Pedro Tamaroff for relevant references, and the referee for their thoughtful comments.

\section{$A_{\infty}$-coalgebras and $A_{\infty}$-coderivations}\label{sec:prelim}

In this section, we give many definitions that we will need in the paper.
In particular, we 
define $A_{\infty}$-coalgebras, $A_{\infty}$-coderivations,
and a cup product and a graded Lie bracket on $A_{\infty}$-coderivations.

\subsection{$A_{\infty}$-coalgebras and the cobar construction}\label{subsec:ccc}

Let $\mathscr{Q}$ be the category of dg $A$-bimodules, that is complexes of $A$-bimodules with morphisms the hom complex $\Hom_\mathscr{Q}(X,Y)$ 
of homogeneous maps between $X$ and $Y$ of arbitrary degrees.  
Then $\mathscr{Q}$ is a monoidal category under the product $\ox_A$ and unit $A$. 

For an object $C$ in $\mathscr{Q}$, we let $T(C)$ denote the free augmented dg algebra in $\mathscr{Q}$, which is explicitly the sum of the tensor product complexes
\[
T(C)=\bigoplus_{n\geq 0} C^{\ox_A n}
\]
with the standard product given by concatenation and differential $d_{T(C)}=\sum_{n=1}^{\infty}d_{C^{\ot_A n}}$.  (Recall that $d_{X\ox_A Y}=d_X\ox 1+1\ox d_Y$, interpreted via the Koszul sign
convention when we apply the differential to elements.)  The dg algebra $T(C)$ is $A$-augmented with augmentation $\epsilon_T : T(C)\rightarrow A$ given by projection onto $T^0(C) = A$.

We consider the $\mfk{m}$-adic completion $\widehat{T}(C)$ at the augmentation ideal $\mfk{m}=\ker(\epsilon_T)$,
\[
\widehat{T}(C)=\varprojlim_n T(C)/(C^{\ot_A n})=\prod_{n\geq 0}C^{\ot_A n} ,
\]
a linear topological, augmented, dg algebra in $\msc{Q}$.

\begin{Def}\label{def:OG}
An {\em $A_\infty$-coalgebra} in $\mathscr{Q}$ is an object $C$ in $\mathscr{Q}$ with the additional information of a homogenous degree $1$, $A^e$-linear, continuous, algebra derivation $\delta:\widehat{T}(C)\to \widehat{T}(C)$ such that:
\begin{enumerate}
\item[(i)] $\delta$ preserves the augmentation, that is the composite 
$\epsilon_T\delta:\widehat{T}(C)\to \widehat{T}(C)\to A$ is~$0$;
\item[(ii)] $\delta$ solves the Maurer-Cartan equation in the endomorphism complex $\mathrm{End}_\mathscr{Q}(\widehat{T}(C))$: 
\[
d_{\mrm{End}}(\delta)+\delta\circ\delta=0.
\]
\end{enumerate}
\end{Def}

There is a notion of a ``weakly counital" $A_\infty$-coalgebra, and we will suppose that all of our $A_\infty$-coalgebras are weakly counital.  This definition is easier to give with a componentwise description of an $A_\infty$-structure on a complex $C$, and is given in Section~\ref{sect:components}.

The above definition of an $A_{\infty}$-coalgebra is constructed following the standard principles of $A_\infty$-(co)algebras outlined in, say, Lefevre-Hasegawa's thesis~\cite{Lefevre}.  (See also~\cite{Keller,KellerII,LPWZ}.)  Note that the continuous derivation $\delta$ is determined by a sequence of degree $1$ maps $\delta_n:C\to C^{\ox_A n}$ (see Section~\ref{sect:components} below for explicit conditions on $\delta_n$).  Compatibility with the augmentation implies that the term $\delta_{0}$ vanishes.  We make this (standard) assumption to avoid dealing with ``curved" structures.  Note also that if $\big((C,d_C),\delta\big)$ is an $A_\infty$-coalgebra, then $\big((C,0),\delta+d_C\big)$ and $\big((C,d_C+\delta_1),\delta-\delta_1\big)$ are $A_\infty$-coalgebras too.
\par

The familiar reader will recognize that an $A_\infty$-coalgebra can be described equivalently as a graded object in the category of $A$-bimodules with a sequence of maps $\Delta_n:C[1]\to C[1]^{\ox_A n}$ of degree $2-n$ satisfying certain compatibilities.  This definition is more natural from a classical perspective, where dg coalgebras are central.  Indeed, in the case that $\Delta_{n}$ vanishes for all $n>2$, $C=(C,\Delta_1,\Delta_2)$ will simply be a dg coalgebra in $\msc{Q}$. 
\par

Let us recall a standard construction, which in some sense appeared already in the work of Adams in the 1950's~\cite{Adams}.

\begin{Def}\label{def:cobar} 
For an $A_\infty$-coalgebra $C=(C,\delta)$ in $\mathscr{Q}$ we let $\Omega C$ denote the dg algebra in $\mathscr{Q}$ which is $\widehat{T}(C)$ as an augmented, topological, algebra, with differential $d_{\Omega C}=d_{\widehat{T}(C)}+\delta$.  We call $\Omega C$ the {\em cobar construction} of $C$.
\end{Def}

As before, $d_{\widehat{T}(C)}$ denotes the differential on the (completed) free dg algebra generated by $C$ in the above definition.  We work with the completed algebra $\widehat{T}(C)$ in order to allow the $\delta_n$ to be nonvanishing for arbitrarily large $n$.  Similarly, for morphisms, we would like to allow arbitrary $f_n$ to be nonvanishing.  
One otherwise might define the cobar construction via the tensor algebra $T(C)$ directly.
See Section~\ref{sec:conn-gr} on connected graded algebras where this is sometimes
sufficient for our connection between Hochschild cohomology and $A_{\infty}$-coderivations, that is our $A_{\infty}$-coalgebras there will be locally finite
in the following sense: 

\begin{Def}
An $A_\infty$-coalgebra $C$ in $\msc{Q}$ is called {\em locally finite} if for each $c\in C$ there exists an integer $N_c$ such that $\delta_N(c)=0$ for all $N>N_c$.
\end{Def}

In the case of a locally finite $A_\infty$-coalgebra $C$ the derivation $d_\Omega:\Omega C\to \Omega C$ is such that the restriction to the topological generators $C$ has image in the dense algebra $T(C)\subset \Omega C$.  Consequently, $d_\Omega$ restricts to a derivation on $T(C)$, and the cobar construction $\Omega C$ is the $\mfk{m}$-adic completion of the dg algebra $(T(C),\delta_\Omega|T(C))=(T(C),d_{T(C)}+\delta|_{T(C)})$.

\begin{Def}\label{def:discrete-cobar}
For a locally finite $A_\infty$-coalgebra $C$ in $\msc{Q}$ we define the {\em discrete cobar construction} as the dg algebra $\Omega^\mrm{disc}C=(T(C),d_{T(C)}+\delta|_{T(C)})$.
\end{Def}

Although in general it is appropriate to employ the completed algebra $\Omega C$, even when the $A_\infty$-structure on $C$ is locally finite, we will see in Section~\ref{sec:conn-gr} that under certain finiteness assumptions one can effectively employ the discrete algebra $\Omega^\mrm{disc}C$.

\subsection{DG Lie algebras of $A_{\infty}$-coderivations}\label{sect:coders}
Note that for any topological dg algebra $\Omega$ in $\mathscr{Q}$, the collection $\mathrm{Der}_\mathscr{Q}(\Omega)$ of continuous, graded, derivations of $\Omega$ in $\msc{Q}$ forms a subcomplex in the endomorphism complex $\End_\mathscr{Q}(\Omega)$.  Furthermore, since the (graded) commutator of derivations is another derivation, the commutator operation
\[
[f,g]=f\circ g-(-1)^{|f||g|}g\circ f
\]
of graded derivations $f,g:\Omega\to\Omega$ makes $\mathrm{Der}_\mathscr{Q}(\Omega)$ naturally a dg Lie algebra.  To be clear, the differential on $\mathrm{Der}_\mathscr{Q}(\Omega)$ is the adjoint operator
\[
d_{\mathrm{Der}}=[d_\Omega,-].
\]
This operator $d_{\mathrm{Der}}$ is a graded Lie algebra derivation as a consequence of the Jacobi identity. 
It satisfies the equation $d_{\mathrm{Der}}\circ d_{\mathrm{Der}}=0$ as a consequence of the identity $[f,[f,g]]=[f\circ f, g]$, which is valid for $f\in \End_\mathscr{Q}(\Omega)$ of odd degree and any $g\in \End_\mathscr{Q}(\Omega)$.  We apply these observations in particular to the case $\Omega=\Omega C$ for an $A_\infty$-coalgebra $C$ in $\mathscr{Q}$ to produce a dg Lie algebra
\[
\mathrm{Der}_\mathscr{Q}(\Omega C).
\]
By our previous discussions, $\mrm{Der}_\msc{Q}(\Omega C)$ is identified with $\Hom_{\mathscr{Q}}(C,\Omega C)$ as a vector space.
\par

We view our dg Lie algebra $\mrm{Der}_\msc{Q}(\Omega C)$ as analogous to the complex $\Coder(T(A))$ in~\cite{Stasheff}.  Continuing this analogy, we are interested in the cohomology of this complex.  The space of cocycles
$\mrm{Coder}^\infty_\mathscr{Q}(C)$
is referred to as the {\em $A_{\infty}$-coderivations}, and the
space of coboundaries $\mathrm{Inn}^\infty_\mathscr{Q}(C)$
is the space of {\em inner $A_\infty$-coderivations}.

As with any dg Lie algebra, the space of cocycles in $\mrm{Der}_\msc{Q}(\Omega C)$ forms a Lie subalgebra, the space of coboundaries forms a Lie ideal in the Lie subalgebra of cocycles, and the cohomology
\[
H^\bullet(\mrm{Der}_\msc{Q}(\Omega C))=\mrm{Coder}^\infty_\msc{Q}(C)/\mrm{Inn}^\infty_\msc{Q}(C)
\]
is a graded Lie algebra.
In Theorem~\ref{mainthm}, we will make a connection to Hochschild cohomology
of an algebra $A$ when $C$ is a bimodule resolution of $A$.  

\subsection{Componentwise definitions}
\label{sect:components}

In this section we give componentwise definitions of $A_{\infty}$-coalgebras
and $A_{\infty}$-coderivations that will be needed in calculations later.

Let $C=(C,\delta)$ be an $A_{\infty}$-coalgebra. 
The $A_\infty$-structure is specified by a collection of maps
\[
\delta_n:C\to C^{\ox_A n},\ \ \delta_n=p_n d_{\Omega C}|_{C},
\]
where $p_n$ is the linear projection $\Omega C\to C^{\ox_A n}$.
The equation $d_{\Omega C}^2=0$ can now be written in the form
\begin{equation}\label{eq:comp}
0=\sum_{r+s+t=N} (1^{\ox_A r}\ox_A \delta_s\ox_A 1^{\ox_A t})\delta_{N-s+1}
\end{equation}
for each $N\geq 0$.  We can also write this equation as
\begin{equation}\label{eqn:d-delta}
\partial(\delta_N)=-\sum_{\stackrel{1<s<N}{r+s+t=N}} (1^{\ox_A r}\ox_A \delta_s\ox_A 1^{\ox_A t})\delta_{N-s+1},
\end{equation}
where $\partial$ denotes the differential on the Hom complex $\Hom_\mathscr{Q}(C,C^{\ox_A N})$ for each $N$; the differential on the complex $C$ is $\delta_1$.

Thus, an $A_\infty$-coalgebra in $\mathscr{Q}$ is a graded $A$-bimodule $C$ equipped with $A^e$-linear operators $\delta_n:C\to C^{\ox_A n}$ of degree $1$, for $n\geq 1$, that solve the equation~\eqref{eq:comp} for all nonnegative $N$.

\begin{Def}
An $A_\infty$-coalgebra $C$ is called {\em weakly counital} if it comes equipped with a degree $1$ bimodule map $\mu:C\to A$ such that $(\mu\ot_A \mu)\delta_2=\mu$ and $(\mu^{\ot_An})\delta_n=0$ for all $n>2$.
\end{Def}

An $A_\infty$-coalgebra which is strictly counital (see e.g.~\cite{LPWZ}) is weakly counital.  All $A_\infty$-coalgebras here will be assumed to be weakly counital for convenience, although the assumption can be avoided.  We next give a componentwise definition of $A_{\infty}$-coderivation. 

\begin{Def}\label{def:coder_comp}
An {\em $A_\infty$-coderivation} $f$ of degree $l$ on $C$ is a 
sequence of maps 
\[
f_n:C\to C^{\ox_A n}
\]
of degree $l$,  for $n\geq 0$, that satisfy
\begin{equation}\label{eq:Ccomp}
\sum_{r+s+t=N} (1^{\ox_A r}\ox_A \delta_s\ox_A 1^{\ox_A t})f_{N-s+1}
=(-1)^l\sum_{r+s+t=N} (1^{\ox_A r}\ox_A f_{s}\ox_A 1^{\ox_A t})\delta_{N-s+1}
\end{equation}
for all $N$.
\end{Def}

We note the possible appearance of the scalar term $f_0:C\to A$ in the above 
notion of an $A_\infty$-coderivation.

We consider an $A_{\infty}$-coderivation to be an element of 
$\Hom_{\mathscr{Q}} (C, \Omega C)$, and identify it with the corresponding
function  in $\Der_{\mathscr{Q}}(\Omega C)$ as described earlier.
In this way we may compose two $A_{\infty}$-coderivations. 
The space of such maps is not closed under composition, but 
it is closed under a bracket, as we define next. 

For $f$ and $g$ as in Definition~\ref{def:coder_comp}, of respective degrees $l$ and $p$, we deduce a Lie bracket given componentwise as $[f,g]=([f,g]_N)_{N\geq 0}$ from our earlier description.  
This is the $A_\infty$-coderivation defined by 
\[
[f,g]_N= \sum_{r+s+t=N} (1^{\ox_A r}\ox_A f_{s}\ox_A 1^{\ox_A t})g_{N-s+1} -(-1)^{lp}\sum_{r+s+t=N}(1^{\ox_A r}\ox_A g_{s}\ox_A 1^{\ox_A t})f_{N-s+1}.
\]
The above operation $[ \ , \ ]$ extends immediately to an operation on the collection of tuples $(b_n)_{n\geq 0}$ of maps $b_n$ in $\Hom_{\mathscr{Q}}(C,C^{\ox_A n})$ such that all the maps $b_n$ have the same degree, and we understand from our earlier description that for any such $b$ the tuple $[\delta,b]$ is an $A_\infty$-coderivation. We call such $A_\infty$-coderivations {\em inner}, as before.

\subsection{The cup product on $A_\infty$-coderivations}\label{subsec:cupprod}

Our main Theorem~\ref{mainthm} below gives an isomorphism of Gerstenhaber algebras.
Towards this end, we define a cup product on $A_{\infty}$-coderivations. 
We first reiterate the definition of function composition in
$\Der_{\mathscr{Q}}(\Omega C)$, expressed componentwise: 

\begin{Def}\label{circ}
Let $(C,\delta)$ be an $A_{\infty}$-coalgebra. For $f\in \DerC_n$ and 
$g\in \DerC_m$, we define $f\circ g\in \Hom_{\mathscr{Q}}(C,\Omega C)_{n+m}$ by the equality
$$
(f\circ g)_i=\sum\limits_{r+s+t=i}
(1^{\ot_Ar}\ot_Af_s\ot_A1^{\ot_At})g_{r+t+1}.
$$
In terms of the elements of $\DerC$, this definition can be reformulated in the following way. If $f$ and $g$ are tuples of maps in $\Hom_{\mathscr{Q}}(C, \Omega C)$ that correspond to $\overline{f},\overline{g}\in \DerC$ respectively, then $f\circ g$ corresponds to the element $\overline{f\circ g}\in \DerC$ such that $(\overline{f\circ g}-\overline{f}\circ\overline{g})|_{C}=0$.
\end{Def}

\begin{Lemma}\label{preLie} For $f\in \DerC_n$, $g\in \DerC_m$ and $h\in \DerC_p$ one has
$$
(f\circ g)\circ h-f\circ (g \circ h)=(-1)^{mn}\big((g\circ f)\circ h-g\circ (f\circ h)\big).
$$
Thus, $(\DerC,\circ)$ is a graded pre-Lie algebra.
\end{Lemma}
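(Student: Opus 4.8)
The plan is to verify the pre-Lie identity by expanding both sides componentwise using the formula for $(-\circ-)_i$ from Definition~\ref{circ}, and then arguing that the difference of the two triple-composition associators is symmetric (up to the Koszul sign $(-1)^{mn}$) in $f$ and $g$. Concretely, I would write out
\[
\big((f\circ g)\circ h\big)_i=\sum_{r+s+t=i}\big(1^{\ot_A r}\ot_A(f\circ g)_s\ot_A 1^{\ot_A t}\big)h_{r+t+1},
\]
and then substitute the definition of $(f\circ g)_s=\sum_{a+b+c=s}(1^{\ot_A a}\ot_A f_b\ot_A 1^{\ot_A c})g_{a+c+1}$. This produces a double sum of terms of the shape $(1\ot f_b\ot 1)(1\ot g_\bullet\ot 1)h_\bullet$ indexed by the relative positions of the $f$-insertion and the $g$-insertion. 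The same expansion applied to $f\circ(g\circ h)$ gives the terms in which $f$ is inserted \emph{inside} a tensor factor coming from $g_\bullet$ applied to $h$.

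The key combinatorial observation — exactly as in the classical pre-Lie identity for the Gerstenhaber composition product — is that in the difference $(f\circ g)\circ h-f\circ(g\circ h)$ the terms in which the $f$-insertion lands in a tensor slot already occupied (i.e.\ overlapping) with the $g$-insertion survive, while the ``disjoint'' terms — where the images of $f$ and $g$ sit in different tensor factors of $h_{\bullet}$ — cancel between the two summands. The surviving ``overlapping'' terms are manifestly symmetric under swapping the roles of $f$ and $g$, except that reordering the compositions $f_b$ and $g_\bullet$ past each other in the tensor product introduces a Koszul sign $(-1)^{|f||g|}=(-1)^{mn}$, since $f$ has degree $n$ and $g$ has degree $m$. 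Thus $(f\circ g)\circ h-f\circ(g\circ h)=(-1)^{mn}\big((g\circ f)\circ h-g\circ(f\circ h)\big)$, which is the claimed identity; the final sentence that $(\DerC,\circ)$ is graded pre-Lie is then immediate, since the right pre-Lie axiom is precisely the statement that the associator is graded-symmetric in its first two arguments.

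I expect the main obstacle to be bookkeeping of the Koszul signs rather than the combinatorics of the cancellation. The cancellation of disjoint terms is formal once one sets up the index sets carefully: each disjoint term appears once in $(f\circ g)\circ h$ with a given index decomposition $r+s+t=i$ and reappears — with the same sign, since no transposition of $f$ past $g$ occurs — in $f\circ(g\circ h)$, so they cancel in the difference. The signs for the overlapping terms require care: one must track the sign incurred when $g_{\bullet}$ is applied to a tensor factor that is itself in the image of $h$, and compare it against the sign when $f_b$ and $g_\bullet$ appear in swapped order. The cleanest way to organize this is probably to work in $\DerC$ directly, using the reformulation at the end of Definition~\ref{circ} that $f\circ g$ is the unique $A_\infty$-coderivation agreeing with the honest function composition $\overline f\circ\overline g$ on $C$; then $(f\circ g)\circ h$ and $f\circ(g\circ h)$ differ, on $C$, only by the associativity of function composition in $\End_\mathscr{Q}(\Omega C)$ versus the projection back to $C$, and the desired identity reduces to a short computation with the Koszul convention recalled in the introduction, namely $(1\ot_A g)(f\ot_A 1)=(-1)^{|f||g|}(f\ot_A g)$.
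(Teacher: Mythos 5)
Your overall strategy (show the associator is a sum of terms symmetric in $f$ and $g$ up to the Koszul sign) is the right one, but the combinatorial analysis at its core is inverted, and as written the argument does not go through. Expanding $\big((f\circ g)\circ h\big)_i$ as you indicate, one substitutes $(f\circ g)_s=\sum_{a+b+c=s}(1^{\ot_A a}\ot_A f_b\ot_A 1^{\ot_A c})g_{a+c+1}$ into a \emph{single} tensor slot of the output of $h$; by functoriality of $\ot_A$ this equals $\sum(1^{\ot_A(r+a)}\ot_A f_b\ot_A 1^{\ot_A(c+t)})(1^{\ot_A r}\ot_A g_{a+c+1}\ot_A 1^{\ot_A t})h_{r+t+1}$, so every term of $(f\circ g)\circ h$ is ``nested'': the $f$-insertion necessarily lands inside the block of factors produced by $g$. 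It is $f\circ(g\circ h)$ that contains both these nested terms and the ``disjoint'' ones in which $f$ hits a factor of the output of $h$ untouched by $g$ --- the opposite of what you wrote. Consequently, in the difference $(f\circ g)\circ h - f\circ(g\circ h)$ it is the nested terms that cancel and (minus) the disjoint terms that survive. This matters, because your symmetry claim is then applied to the wrong class: the nested terms are \emph{not} symmetric under exchanging $f$ and $g$ (swapping them turns ``$f$ inside $g$'s output'' into ``$g$ inside $f$'s output'', a genuinely different term), whereas the disjoint terms are, up to the Koszul sign $(-1)^{mn}$ coming from $(1\ot_A g)(f\ot_A 1)=(-1)^{mn}(f\ot_A g)$. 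With the roles corrected your argument does yield the identity, but as stated the cancellation and the symmetry are both asserted of the wrong terms, and the justification that ``each disjoint term appears once in $(f\circ g)\circ h$'' is false --- no disjoint term occurs there at all.

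For comparison, the paper's proof avoids this bookkeeping entirely, along the lines of your closing remark but with one extra ingredient you do not supply: since the graded commutator of two derivations of $\Omega C$ is again a derivation, the tuple $f\circ g-(-1)^{mn}g\circ f$ corresponds to the honest derivation $\overline f\circ\overline g-(-1)^{mn}\overline g\circ\overline f$ on all of $\Omega C$ (not merely after restriction to $C$), so $(f\circ g-(-1)^{mn}g\circ f)\circ h$ restricted to $C$ equals $(\overline f\circ\overline g\circ\overline h-(-1)^{mn}\overline g\circ\overline f\circ\overline h)|_C$, which is also the restriction of $f\circ(g\circ h)-(-1)^{mn}g\circ(f\circ h)$; two derivations agreeing on the generators $C$ coincide. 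If you want to pursue your second route, this use of the commutator is the step you still need to make explicit.
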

\begin{proof} Let as before $f$, $g$ and $h$ correspond to the elements $\overline{f}$, $\overline{g}$, and $\overline{h}\in \DerC$ respectively. Since $\overline{f}\circ \overline{g}-(-1)^{mn}\overline{g}\circ \overline{f}\in \DerC$, one sees that $(f\circ g-(-1)^{mn}g\circ f)\circ h$ corresponds to the element $H\in \DerC$ such that $H|_C=(\overline{f}\circ \overline{g}\circ \overline{h}-(-1)^{mn}\overline{g}\circ \overline{f}\circ \overline{h})|_C$. On the other hand, $f\circ (g \circ h)$ and $g\circ (f\circ h)$ correspond to the elements $F,G\in \DerC$ such that $(F-\overline{f}\circ \overline{g}\circ \overline{h})|_C=(G-\overline{g}\circ \overline{f}\circ \overline{h})|_C=0$. Thus, $H=F-(-1)^{mn}G$, and the required equality follows.
\end{proof}

We next define an $A_{\infty}$-algebra structure on $\DerC$ that is induced
by convolution of the $A_{\infty}$-coalgebra
structure and the algebra structure on $\Omega C$:

\begin{Def}\label{defn-cup}
Let $(C,\delta)$ be an $A_{\infty}$-coalgebra. 
Let $f_i\in \DerC_{p_i}$ for $1\le i\le l$, $l\ge 2$. Then we define $m_l(f_1,\dots,f_l)\in \DerC_{\sum\limits_{i=1}^lp_i+1}$ by the equality
$$
m_l(f_1,\dots,f_l)_n=(-1)^{\sum\limits_{i=1}^l(i-1)p_i}\sum\limits_{\sum\limits_{k=0}^{l}i_k+\sum\limits_{k=1}^{l}j_k=n}(1^{\ot_Ai_0}\ot_A(f_1)_{j_1}\ot_A\dots\ot_A(f_l)_{j_l}\ot_A1^{\ot_Ai_{l}})\delta_{\sum\limits_{k=0}^{l}i_k+l}.
$$
\end{Def}

Since we will be mainly interested in the case $l=2$, let us write down that, for $f\in \DerC_n$ and $g\in \DerC_p$, the element $f\smile g=m_2(f,g)\in \DerC_{n+m+1}$ is defined by the equality
$$
(f\smile g)_i=(-1)^p\sum\limits_{r+s+t+u+v=i}
(1^{\ot_Ar}\ot_Af_s\ot_A1^{\ot_At}\ot_Ag_u\ot_A1^{\ot_Av})\delta_{r+t+v+2}.
$$

\begin{Lemma}\label{derivate} For any $l$-tuple $(f_1,\dots,f_l)\in \DerC_{p_1}\times\dots\times \DerC_{p_l}$, one has
\begin{multline*}
[\delta,m_l(f_1,\dots,f_l)]=\sum\limits_{i=1}^l(-1)^{\sum\limits_{k=1}^{i-1}(p_k+1)}m_l(f_1,\dots,f_{i-1},[\delta,f_i],f_{i+1},\dots,f_l)\\
-(-1)^{\sum\limits_{k=1}^{l}p_k}\sum\limits_{j=2}^{l-1}\sum\limits_{i=0}^{l-j}(-1)^{i+j\sum\limits_{k=i+j+1}^{l}p_k}m_{l-j+1}\big(f_1,\dots,f_i,m_j(f_{i+1},\dots,f_{i+j}),f_{i+j+1},\dots,f_l\big).
\end{multline*}
\end{Lemma}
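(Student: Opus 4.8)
The plan is to prove Lemma~\ref{derivate} by reducing it to a statement about the uncompleted operations $f\circ g$ on $\DerC$ via the identification $\mrm{Der}_\msc{Q}(\Omega C)\cong\Hom_\msc{Q}(C,\Omega C)$, exactly as in the proof of Lemma~\ref{preLie}. The key observation is that the cup products $m_l$ are built by convolution: for $f_i\in\DerC_{p_i}$, the element $m_l(f_1,\dots,f_l)$ is (up to the global Koszul sign $(-1)^{\sum(i-1)p_i}$) the composite of $\delta$ followed by concatenation $\mu_{\Omega C}\colon(\Omega C)^{\ot_A l}\to\Omega C$ applied after $f_1\ot_A\cdots\ot_A f_l$, all restricted to $C$ and then extended to a derivation. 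Writing things this way, both sides of the claimed identity become expressions in the associative algebra $\Omega C$ with multiplication $\mu=\mu_{\Omega C}$ and the differential $d_\Omega=d_{\widehat T(C)}+\delta$, and the identity should follow from the Leibniz rule for $d_\Omega$ together with associativity of $\mu$.

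Concretely, I would proceed as follows. First, fix notation: for a derivation $f$ on $\Omega C$, write $\bar f\in\Hom_\msc{Q}(C,\Omega C)$ for its restriction to $C$, and recall from Definition~\ref{circ} that $\overline{f\circ g}|_C=(\bar f\circ\bar g)|_C$ and that any tuple of maps $C\to C^{\ot_A n}$ of a fixed degree determines a unique derivation. Then I would establish the single most important computational fact: that $m_l(f_1,\dots,f_l)$, as a derivation, has restriction to $C$ equal to $(-1)^{\sum(i-1)p_i}\,\mu^{(l)}\circ(\bar f_1\ot_A\cdots\ot_A\bar f_l)\circ\delta|_C$, where $\mu^{(l)}$ is the $l$-fold concatenation map and $\delta|_C=\sum_n\delta_n$. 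This is a direct unwinding of Definition~\ref{defn-cup}: the sum over $\sum i_k+\sum j_k=n$ records precisely the components of $\delta$ landing in various tensor powers, with the $f_i$ inserted into chosen slots and identities elsewhere, and the concatenation of the resulting tensors in $\Omega C$ collapses the bracketing. The Koszul sign in the definition is exactly what is needed to make this compatible with the sign convention $(f\ot_A g)(x\ot_A y)=(-1)^{|g||x|}f(x)\ot_A g(y)$ when we push $f_1\ot_A\cdots\ot_A f_l$ past the relevant identities.

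With that translation in hand, the left-hand side $[\delta,m_l(f_1,\dots,f_l)]=[d_\Omega,m_l(\dots)]-[d_{\widehat T},m_l(\dots)]$ is computed by restricting to $C$ and using that $d_\Omega$ is a derivation of $\Omega C$: $d_\Omega\circ\mu^{(l)}=\mu^{(l)}\circ\sum_i(1^{\ot_A(i-1)}\ot_A d_\Omega\ot_A 1^{\ot_A(l-i)})$ up to Koszul signs. Applying $d_\Omega$ to $\mu^{(l)}(\bar f_1\ot_A\cdots\ot_A\bar f_l)\delta$ and subtracting $(-1)^{|m_l(\dots)|}\mu^{(l)}(\bar f_1\ot_A\cdots\ot_A\bar f_l)\delta\circ d_\Omega|_C$, one gets a sum of terms where $d_\Omega$ hits each $\bar f_i$ (producing the $m_l(f_1,\dots,[\delta,f_i],\dots,f_l)$ terms, since $[\delta,f_i]$ restricted to $C$ is $d_\Omega\circ\bar f_i-(-1)^{p_i}\bar f_i\circ d_\Omega|_C$ up to sorting out the $d_{\widehat T}$ contributions), plus a term where $d_\Omega$ hits $\delta|_C$; the latter, via the Maurer--Cartan/coassociativity equation $\eqref{eq:comp}$ rewritten as $d_\Omega$ applied to $\delta|_C$ equals $-\sum(1\ot_A\delta\ot_A 1)\delta$ inside $\Omega C$, reorganizes into the composite cup-product terms $m_{l-j+1}(f_1,\dots,m_j(f_{i+1},\dots,f_{i+j}),\dots,f_l)$ after reinserting an internal concatenation. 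The whole computation then reduces to bookkeeping of Koszul signs, matching them against the stated exponents $\sum_{k=1}^{i-1}(p_k+1)$ and $i+j\sum_{k=i+j+1}^l p_k$.

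The main obstacle, as usual in $A_\infty$-sign arguments, is precisely this sign bookkeeping: carefully tracking the signs incurred when (a) moving $d_\Omega$ across the tensor factors $\bar f_1,\dots,\bar f_{i-1}$ to land on $\bar f_i$, (b) the internal $\delta$ produced by Maurer--Cartan interacts with the $f_j$'s it must pass, and (c) reconciling the global prefactor $(-1)^{\sum(i-1)p_i}$ of $m_l$ with the analogous prefactor of $m_{l-j+1}$ after one inner $m_j$ has shifted all indices. I would organize this by introducing the shifted degree $|f_i|[1]=p_i+1$ and checking the identity one term-type at a time, but I expect no conceptual difficulty beyond the signs — the identity is the standard statement that $(\DerC,\{m_l\})$ is an $A_\infty$-algebra with the differential $[\delta,-]$, and the proof is forced once the convolution description above is in place.
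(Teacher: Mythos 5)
Your strategy---expand $[\delta,m_l(f_1,\dots,f_l)]$ directly, let the terms where the differential lands on an $f_i$ assemble into $m_l(\dots,[\delta,f_i],\dots)$, and let the Maurer--Cartan relation $\delta\circ\delta=0$ generate the composite terms $m_{l-j+1}(\dots,m_j(\dots),\dots)$---is essentially the paper's. The paper organizes the Maurer--Cartan input by introducing the auxiliary element $H$, defined by the same formula as $m_l(f_1,\dots,f_l)$ but with $(\delta\circ\delta)_{\sum i_k+l}$ in place of $\delta_{\sum i_k+l}$; since $\delta\circ\delta=0$ one has $H=0$, and expanding $H$ a second way produces exactly the terms to be subtracted from the direct expansion of $\delta\circ\mathbf{f}$. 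Your convolution/Leibniz packaging is a reformulation of that same computation, not a different route.

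There is, however, a genuine gap in your inventory of terms. When you push $d_\Omega$ through the concatenation by the Leibniz rule you list only contributions where $d_\Omega$ hits some $\bar f_i$ plus ``a term where $d_\Omega$ hits $\delta|_C$.'' But the defining formula for $m_l$ also contains the spectator slots $1^{\ot_A i_k}$, and the derivation $d_\Omega$ acts nontrivially on those tensor factors; these contributions are the terms $m_{l+1}(f_1,\dots,f_i,\delta,f_{i+1},\dots,f_l)$ (the paper's $\mathbf{f}^{\delta}_i$). Likewise, the degenerate cases $j=1$ and $j=l$ of the Maurer--Cartan reorganization do not yield composite cup products (there is no $m_1$ in Definition~\ref{defn-cup}) but rather the terms $m_l(\dots,f_i\circ\delta,\dots)$ and $\mathbf{f}\circ\delta$. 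None of these families vanishes on its own: the $\mathbf{f}^{\delta}_i$ and $\mathbf{f}^{\circ\delta}_i$ terms cancel pairwise between the two expansions, $\mathbf{f}\circ\delta$ is absorbed into the bracket itself, and exhibiting these cancellations is the actual content of the proof; your sketch never produces the terms it would need to cancel. A smaller point: the splitting $[\delta,m_l(\dots)]=[d_\Omega,m_l(\dots)]-[d_{\widehat{T}},m_l(\dots)]$ misreads the conventions, since in the componentwise bracket the symbols $\delta_n$ are the components $p_nd_{\Omega C}|_C$ of the full differential (in particular $\delta_1$ is the differential of $C$), so $[\delta,-]$ in the statement already means $[d_{\Omega C},-]$ and no such splitting should be introduced. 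Both issues are repairable, but as written the argument omits precisely the cross-terms whose cancellation the lemma turns on.
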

\begin{proof}Let us introduce 
\begin{multline*}
{\bf f}=m_l(f_1,\dots,f_l),\,\,{\bf f}^{\delta}_i=m_{l+1}(f_1,\dots,f_i,\delta,f_{i+1},\dots,f_l),\\{\bf f}^{\delta\circ}_i=m_l(f_1,\dots,f_{i-1},\delta\circ f_i,f_{i+1},\dots,f_l)\mbox{, and }{\bf f}^{\circ\delta}_i=m_l(f_1,\dots,f_{i-1},f_i\circ\delta,f_{i+1},\dots,f_l).
\end{multline*}

Let us look at the left hand side $[\delta,{\bf f}]=\delta\circ {\bf f}+(-1)^{\sum\limits_{k=1}^{l}p_k}{\bf f}\circ\delta$ of the required equality. Direct calculations show that
\begin{multline*}
\delta\circ {\bf f}=\sum\limits_{i=0}^l(-1)^{\sum\limits_{k=1}^{l}p_k+i}{\bf f}^{\delta}_i+\sum\limits_{i=1}^l(-1)^{\sum\limits_{k=1}^{i-1}(p_k+1)}{\bf f}^{\delta\circ}_i=\sum\limits_{i=0}^l(-1)^{\sum\limits_{k=1}^{l}p_k+i}{\bf f}^{\delta}_i\\
-\sum\limits_{i=1}^l(-1)^{\sum\limits_{k=1}^{i}(p_k+1)}{\bf f}^{\circ\delta}_i+\sum\limits_{i=1}^l(-1)^{\sum\limits_{k=1}^{i-1}(p_k+1)}m_l(f_1,\dots,f_{i-1},[\delta,f_i],f_{i+1},\dots,f_l).
\end{multline*}
Let us introduce the element $H\in\Der_\mathscr{Q}(\Omega C)_{\sum\limits_{i=1}^lp_i+2}$ by the equality
$$
H_n=(-1)^{\sum\limits_{k=1}^{l}kp_k}\sum\limits_{\sum\limits_{k=0}^{l}i_k+\sum\limits_{k=1}^{l}j_k=n}(1^{\ot_Ai_0}\ot_A(f_1)_{j_1}\ot_A\dots\ot_A(f_l)_{j_l}\ot_A1^{\ot_Ai_{l}})(\delta\circ\delta)_{\sum\limits_{k=0}^{l}i_k+l}.
$$
It follows from $\delta\circ \delta=0$ that $H=0$. On the other hand, it is not difficult to see that
\begin{multline*}
H=\sum\limits_{i=0}^l(-1)^{\sum\limits_{k=1}^{l}p_k+i}{\bf f}^{\delta}_i-\sum\limits_{i=1}^l(-1)^{\sum\limits_{k=1}^{i}(p_k+1)}{\bf f}^{\circ\delta}_i+(-1)^{\sum\limits_{k=1}^{l}p_k}{\bf f}\circ\delta\\
+(-1)^{\sum\limits_{k=1}^{l}p_k}\sum\limits_{j=2}^{l-1}\sum\limits_{i=0}^{l-j}(-1)^{i+j\sum\limits_{k=i+j+1}^{l}p_k}m_{l-j+1}\big(f_1,\dots,f_i,m_j(f_{i+1},\dots,f_{i+j}),f_{i+j+1},\dots,f_l\big).
\end{multline*}
Subtracting the obtained expression from the formula for $\delta\circ {\bf f}$ obtained above, we get the required equality.
\end{proof}

Lemma~\ref{derivate} says that $\mrm{Der}_\msc{Q}(\Omega C)$ is an $A_\infty$-algebra under the convolution operations $\{m_n\}_n$.  As with $A_\infty$-algebras more generally, the subspace of cocycles is not an $A_\infty$-subalgebra immediately, but the cocycles are closed under the (possibly non-associative) multiplication $m_2$.

\begin{coro}\label{ass} The operation $\smile \ = m_2$ induces a degree preserving operation on $\Coder^{\infty}_\mathscr{Q}(C)[1]$. Moreover, $\Inn^{\infty}_\mathscr{Q}(C)[1]$ is an ideal in $\Coder^{\infty}_\mathscr{Q}(C)[1]$ with respect to the operation $\smile$, and the algebra $\Big(\big(\Coder^{\infty}_\mathscr{Q}(C)/\Inn^{\infty}_\mathscr{Q}(C)\big)[1],\smile\Big)$ is associative.
\end{coro}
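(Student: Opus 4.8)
The plan is to read off everything we need from Lemma~\ref{derivate} by specializing it to small arities and to cocycles, exactly as one does for the classical statement that a minimal $A_\infty$-algebra has an associative $m_2$ on homology. Recall that an element $\overline f\in\Der_\msc{Q}(\Omega C)$ is a cocycle precisely when $[\delta,\overline f]=0$, and is a coboundary precisely when $\overline f=[\delta,\overline b]$ for some tuple $b$ (here one must be slightly careful: in Lemma~\ref{derivate} the inputs $f_i$ are genuine $A_\infty$-coderivations, while the ``inner'' elements $[\delta,b]$ are obtained from tuples $b$ that need not themselves satisfy~\eqref{eq:Ccomp}; I will address this point below).

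First I would prove that $\smile=m_2$ descends to $\Coder^\infty_\msc{Q}(C)[1]$. Take $f\in\Coder^\infty_\msc{Q}(C)_n$ and $g\in\Coder^\infty_\msc{Q}(C)_m$, so $[\delta,f]=[\delta,g]=0$. Apply Lemma~\ref{derivate} with $l=2$: the double sum on the right is empty (there is no $j$ with $2\le j\le l-1=1$), so the identity reads
\[
[\delta,m_2(f,g)]=m_2([\delta,f],g)\pm m_2(f,[\delta,g])=0 .
\]
Hence $f\smile g$ is again a cocycle, and $m_2$ restricts to $\Coder^\infty_\msc{Q}(C)[1]$; the degree bookkeeping ($f\in\Der_{n}$, $g\in\Der_{m}$ gives $f\smile g\in\Der_{n+m+1}$, i.e. degree-preserving after the shift by $[1]$) is exactly the statement recorded before Lemma~\ref{derivate}.

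Next I would show $\Inn^\infty_\msc{Q}(C)[1]$ is a two-sided ideal for $\smile$. Let $f$ be a cocycle and let $b$ be a tuple of maps of common degree, so that $[\delta,b]$ is an inner $A_\infty$-coderivation. I want $f\smile[\delta,b]$ and $[\delta,b]\smile f$ to be inner. For this it is cleanest to observe that $m_2(f,-)$ and $m_2(-,f)$ make sense on \emph{all} such tuples $b$ (Definition~\ref{defn-cup}'s formula only uses the components, not the coderivation equation), and to apply the $l=2$ case of Lemma~\ref{derivate} with one slot a cocycle $f$ and the other slot $b$: this yields (up to sign)
\[
[\delta,m_2(f,b)] = \pm\, m_2(f,[\delta,b]) ,
\qquad
[\delta,m_2(b,f)] = \pm\, m_2([\delta,b],f) ,
\]
using $[\delta,f]=0$ to kill the other term. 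Thus $f\smile[\delta,b]=\pm[\delta,m_2(f,b)]$ and $[\delta,b]\smile f=\pm[\delta,m_2(b,f)]$ are coboundaries, hence inner. (One should double-check that Lemma~\ref{derivate}, whose proof only invokes $\delta\circ\delta=0$ and the bilinearity of the $m_j$ in their arguments, indeed applies when one argument is an arbitrary tuple $b$ rather than a coderivation; the proof given does not use the coderivation equation on the inputs, so this is fine.)

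Finally I would establish associativity on the quotient. Apply Lemma~\ref{derivate} with $l=3$ and three cocycles $f_1,f_2,f_3$: the first sum on the right vanishes since each $[\delta,f_i]=0$, and the double sum has only the terms $j=2$, $i=0$ and $j=2$, $i=1$, giving, up to an overall sign,
\[
[\delta,m_3(f_1,f_2,f_3)] = \pm\Big( m_2\big(m_2(f_1,f_2),f_3\big) \;\pm\; m_2\big(f_1,m_2(f_2,f_3)\big) \Big) .
\]
Therefore the associator $(f_1\smile f_2)\smile f_3 - f_1\smile(f_2\smile f_3)$ equals $\pm[\delta,m_3(f_1,f_2,f_3)]$, which is a coboundary and hence inner. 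Combined with the previous paragraph (so that the bracket $\smile$ is well defined on the quotient), this shows $\big((\Coder^\infty_\msc{Q}(C)/\Inn^\infty_\msc{Q}(C))[1],\smile\big)$ is associative.

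\emph{The main obstacle} is purely one of sign- and index-bookkeeping: one must verify that the signs emitted by Lemma~\ref{derivate} in the cases $l=2,3$ really do collapse, after quotienting, to the clean relations above, and in particular that the overall sign $(-1)^{p_1+p_2}$ (resp. $(-1)^{p_1+p_2+p_3}$) multiplying the double sum matches the sign built into the definition of $\smile$ so that the associator comes out as a genuine coboundary rather than a coboundary up to an unwanted sign on one of the two terms. A secondary, more conceptual point to get right is the remark above: the $m_j$ and the bracket $[\delta,-]$ are defined on arbitrary common-degree tuples, and Lemma~\ref{derivate}'s proof never uses that its inputs are coderivations, so it is legitimate to feed it an arbitrary tuple $b$ in one slot when proving the ideal property.
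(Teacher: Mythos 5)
Your proposal is correct and follows essentially the same route as the paper, whose proof simply cites Lemma~\ref{derivate} with $l=2$ for everything except associativity and with $l=3$ for associativity; you have merely written out those specializations. Your worry about feeding an arbitrary tuple $b$ into the lemma is moot, since the lemma is already stated for elements of $\Der_\mathscr{Q}(\Omega C)$, which the paper identifies with the space of \emph{all} common-degree tuples rather than only the cocycles.
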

\begin{proof} Everything except the associativity follows directly from Lemma~\ref{derivate} with $l=2$. The stated associativity follows from Lemma~\ref{derivate} with $l=3$.
\end{proof}

\begin{Lemma}\label{gc} Let $(C,\delta)$ be an $A_{\infty}$-coalgebra. Then $$(-1)^p(f\circ (g\circ\delta)-(f\circ g)\circ\delta)=  f\smile g- (-1)^{(p+1)(n+1) }g\smile f$$ for any $f\in \End_\mathscr{Q}(C)_n$ and $g\in \End_\mathscr{Q}(C)_p$. In particular, if $f$ and $g$ are  $A_{\infty}$-coderivations, then
$f\smile g+ (-1)^{(p+1)(n+1) }g\smile f
=(-1)^{n}[\delta,f\circ g]\in \Inn^{\infty}_\mathscr{Q}(C)$.
\end{Lemma}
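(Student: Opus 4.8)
The plan is to derive everything from the pre-Lie identity of Lemma~\ref{preLie} together with the ``Gerstenhaber-type'' relation asserted in the first displayed equation, which is the real content here; once that display is established, both conclusions are essentially formal. First I would unwind the definitions of $f\smile g=m_2(f,g)$ and of $f\circ g$ componentwise, and rewrite the expression $f\circ(g\circ\delta)-(f\circ g)\circ\delta$ using the associator identity of Lemma~\ref{preLie} applied to the triple $(f,g,\delta)$ (or rather to its componentwise avatars in $\Hom_\mathscr{Q}(C,\Omega C)$). The point is that $g\circ\delta$ involves inserting $\delta$ into a single slot below $g$, so $f\circ(g\circ\delta)$ and $(f\circ g)\circ\delta$ differ precisely by the terms in which an application of $\delta$ ``splits'' one of the outputs created by $f$ or $g$ — and collecting those terms, with the Koszul signs bookkept carefully, reproduces exactly $m_2$ applied in the two orders. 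So Step~1 is: expand both sides of the claimed identity $(-1)^p\big(f\circ(g\circ\delta)-(f\circ g)\circ\delta\big)=f\smile g-(-1)^{(p+1)(n+1)}g\smile f$ into sums over compositions $r+s+t+\dots=N$, and verify term-by-term that they agree. I expect this sign-chase to be the main obstacle: the convention $(f\ot_A g)(x\ot_A y)=(-1)^{|g||x|}f(x)\ot_A g(y)$, together with the degree shifts implicit in $\Der_\mathscr{Q}(\Omega C)\cong\Hom_\mathscr{Q}(C,\Omega C)$ and the degree-$1$ nature of $\delta$, makes the signs delicate, and one must be scrupulous about whether $\delta$ acts before or after the $f_s$, $g_u$ in each summand.

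Second, for the ``in particular'' clause I would specialize to the case where $f$ and $g$ are $A_\infty$-coderivations, i.e.\ cocycles in $\Der_\mathscr{Q}(\Omega C)$, so $[\delta,f]=[\delta,g]=0$, equivalently $\delta\circ f=(-1)^{|f|}f\circ\delta$ and likewise for $g$. Then I compute $[\delta,f\circ g]=\delta\circ(f\circ g)-(-1)^{(n+p)}(f\circ g)\circ\delta$; using associativity of $\circ$ up to the pre-Lie correction is not even needed here for the left factor since $\delta\circ(f\circ g)$ can be rewritten by moving $\delta$ past $f$: $\delta\circ(f\circ g)=(\delta\circ f)\circ g + $ (pre-Lie correction) $=(-1)^n (f\circ\delta)\circ g+\cdots$, and one continues moving $\delta$ down through $g$. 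The net effect, after invoking the identity from the first part twice (once to handle $f\circ(g\circ\delta)-(f\circ g)\circ\delta$ and once its $f\leftrightarrow g$ swap), is to produce $(-1)^n\big(f\smile g + (-1)^{(p+1)(n+1)}g\smile f\big)$, which rearranges to the stated formula $f\smile g+(-1)^{(p+1)(n+1)}g\smile f=(-1)^n[\delta,f\circ g]$. That this element lies in $\Inn^\infty_\mathscr{Q}(C)$ is immediate from the definition of inner $A_\infty$-coderivations given in Section~\ref{sect:components}: $f\circ g$ is a tuple of maps in $\Hom_\mathscr{Q}(C,C^{\ox_A n})$ all of the same degree $n+p$, and $[\delta,-]$ applied to any such tuple is by definition inner.

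A cleaner way to organize the second part, which I would likely adopt to minimize recomputation, is to apply the first identity symmetrically: write $X:=f\circ(g\circ\delta)-(f\circ g)\circ\delta$ and $Y:=g\circ(f\circ\delta)-(g\circ f)\circ\delta$, so that $(-1)^pX=f\smile g-(-1)^{(p+1)(n+1)}g\smile f$ and $(-1)^nY=g\smile f-(-1)^{(n+1)(p+1)}f\smile g$. Adding suitable multiples, together with the pre-Lie identity of Lemma~\ref{preLie} which controls $(f\circ g)\circ\delta-f\circ(g\circ\delta)$ versus its $f\leftrightarrow g$ swap, lets me express $f\smile g+(-1)^{(p+1)(n+1)}g\smile f$ purely in terms of associators, and then the coderivation hypothesis $\delta\circ f=(-1)^n f\circ\delta$ collapses those associators into $[\delta,f\circ g]$ up to the sign $(-1)^n$. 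The only genuine computation, then, is Step~1; everything after it is algebraic rearrangement using Lemma~\ref{preLie}, the (graded) Jacobi/pre-Lie relations, and the definition of $\smile=m_2$ in Definition~\ref{defn-cup}.
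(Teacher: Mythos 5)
Your proposal is correct and follows essentially the same route as the paper: the first displayed identity is verified by direct componentwise expansion of $\circ$ and $\smile$ from Definitions~\ref{circ} and~\ref{defn-cup} (the cross terms, where $f$ and $g$ land in distinct tensor factors of the output of $\delta$, are exactly the $m_2$ terms), and the ``in particular'' clause is then obtained by combining that identity with the pre-Lie symmetry of Lemma~\ref{preLie} and the cocycle conditions $\delta\circ f=(-1)^nf\circ\delta$, $\delta\circ g=(-1)^pg\circ\delta$, which is precisely the paper's two-line argument. The only caveat is that Lemma~\ref{preLie} plays no role in establishing the first identity itself (it only records symmetry of the associator, not its value), so the real content of Step~1 is the term-by-term expansion you describe, exactly as in the paper.
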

\begin{proof} The first equality easily follows from the definitions of the operations $\circ$ and $\smile$. The second equality follows from the first one and Lemma \ref{preLie}.
\end{proof}

\begin{theorem} If $(C,\delta)$ is an $A_{\infty}$-coalgebra, then $\Inn^{\infty}_\mathscr{Q}(C)$ is an ideal in $\Coder^{\infty}_\mathscr{Q}(C)$ with respect to the operations $\smile$ and $[ \ , \ ]$. Moreover, $\Big(\big(\Coder^{\infty}_\mathscr{Q}(C)/\Inn^{\infty}_\mathscr{Q}(C)\big)[1],\smile,[ \ , \ ]\Big)$ is a Gerstenhaber algebra (in general, nonunital).
\end{theorem}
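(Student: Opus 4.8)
The plan is to handle the statement in two halves. The first half, that $\Inn^{\infty}_\mathscr{Q}(C)$ is a two-sided ideal for both $\smile$ and $[\ ,\ ]$, requires essentially nothing new: the $\smile$-part is exactly the content of Corollary~\ref{ass}, while the $[\ ,\ ]$-part is the general fact, already recorded in Section~\ref{sect:coders}, that in any dg Lie algebra the coboundaries form a Lie ideal inside the cocycles — applied here to $\mrm{Der}_\mathscr{Q}(\Omega C)$, whose cocycles and coboundaries are by definition $\Coder^{\infty}_\mathscr{Q}(C)$ and $\Inn^{\infty}_\mathscr{Q}(C)$. Consequently $\smile$ and $[\ ,\ ]$ descend to well-defined operations on $H := \big(\Coder^{\infty}_\mathscr{Q}(C)/\Inn^{\infty}_\mathscr{Q}(C)\big)[1]$, and the task reduces to checking that these satisfy the Gerstenhaber axioms.

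Three of the four axioms are immediate from results already in hand. Associativity of $\smile$ on $H$ is Corollary~\ref{ass} (which rests, via Lemma~\ref{derivate} with $l=3$, on the $A_\infty$-algebra structure on $\mrm{Der}_\mathscr{Q}(\Omega C)$). Graded-commutativity of $\smile$ on $H$ is the second assertion of Lemma~\ref{gc}, which identifies $f\smile g + (-1)^{(p+1)(n+1)}g\smile f$ with the inner coderivation $(-1)^{n}[\delta,f\circ g]$. The graded antisymmetry and graded Jacobi identity for $[\ ,\ ]$ hold already on the dg Lie algebra $\mrm{Der}_\mathscr{Q}(\Omega C)$, hence pass to $H$; with the shift $[1]$ taken into account, $[\ ,\ ]$ carries the degree required of a Gerstenhaber bracket. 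No unit for $\smile$ is claimed, which is the meaning of the parenthetical ``nonunital''.

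The remaining, and essential, point is the Poisson (Leibniz) identity
\[
[\bar f\smile\bar g,\bar h]=[\bar f,\bar h]\smile\bar g\ \pm\ \bar f\smile[\bar g,\bar h],\qquad \bar f,\bar g,\bar h\in H,
\]
with signs governed by the Koszul rule on the shifted degrees. I would prove this already in $\mrm{Der}_\mathscr{Q}(\Omega C)$, up to an inner coderivation: for $A_\infty$-coderivations $f,g,h$, expand $[\ ,\ ]$ through the composition $\circ$ (via $[a,b]=a\circ b\pm b\circ a$) and $\smile=m_2$ componentwise on both sides, and exhibit the difference of the two sides as $[\delta,\Phi]$ for an explicit $\Phi=\Phi(f,g,h)$ built from $\circ$ and the higher convolution operations $m_n$ of Lemma~\ref{derivate}. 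The tools feeding this computation are the pre-Lie identity of Lemma~\ref{preLie}, which controls the associator of $\circ$ and the Jacobi-type cancellations; the chain-level identity of Lemma~\ref{gc}, namely $(-1)^{p}\big(f\circ(g\circ\delta)-(f\circ g)\circ\delta\big)=f\smile g-(-1)^{(p+1)(n+1)}g\smile f$, which converts the compositions against $\delta$ that appear in the expansion into honest cup products; the $A_\infty$-relations of Lemma~\ref{derivate}; and the cocycle conditions $[\delta,f]=[\delta,g]=[\delta,h]=0$, which kill the boundary terms. The main obstacle is concentrated entirely here: it is a lengthy but mechanical verification that, after the componentwise expansion, every insertion term either cancels in a pair or is absorbed into $[\delta,\Phi]$. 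I expect no idea beyond Lemmas~\ref{preLie}, \ref{derivate}, and~\ref{gc} to be needed; the whole difficulty lies in organizing the many insertion terms and their Koszul signs so that the identity closes up.
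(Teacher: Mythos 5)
Your reduction is the paper's reduction: the ideal property and the graded Lie structure come from the general dg Lie algebra facts of Section~\ref{sect:coders}, associativity of $\smile$ from Corollary~\ref{ass}, graded commutativity from Lemma~\ref{gc}, and everything is then concentrated in the Poisson identity, which both you and the paper prove at the chain level up to an explicit inner coderivation. The one place where your plan as written would not close up is the specification of the bounding element $\Phi$. The paper's $\Phi$ is $m_2^h(f,g)$, where $m_l^h$ is a \emph{new} brace-type operation obtained from Definition~\ref{defn-cup} by replacing the terminal $\delta$ by $h$, namely $m_l^h(f_1,\dots,f_l)_n=\sum(1^{\ot_A i_0}\ot_A (f_1)_{j_1}\ot_A\cdots\ot_A(f_l)_{j_l}\ot_A 1^{\ot_A i_l})h_{\sum_k i_k+l}$. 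This operation is not expressible through $\circ$ and the $m_n$ of Lemma~\ref{derivate} alone, contrary to what your outline suggests: the $m_n$ have $\delta$ hard-coded in the last slot, and what the associator of $\circ$ against $h$ produces (as in Lemma~\ref{preLie} and the first identity of Lemma~\ref{gc}) is only the symmetrized combination $m_2^h(f,g)\pm m_2^h(g,f)$, whereas the Leibniz defect is bounded by the single unsymmetrized brace $m_2^h(f,g)$. Once the operations $m_l^h$ are admitted, the rest is exactly the mechanical verification you anticipate: one expands $[f\smile g,h]$, $[\delta,m_2^h(f,g)]$, $m_2^{\delta\circ h}(f,g)$ and $m_2^{h\circ\delta}(f,g)$ componentwise and finds that the defect of the Poisson identity equals $(-1)^r m_2^{[\delta,h]}(f,g)+(-1)^n m_2^h([\delta,f],g)+m_2^h(f,[\delta,g])-(-1)^n[\delta,m_2^h(f,g)]$, whose first three terms vanish on cocycles, leaving an element of $\Inn^{\infty}_\mathscr{Q}(C)$. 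So: same strategy throughout, but you must enlarge your toolbox by the $h$-braces before the insertion terms can be organized into a coboundary.
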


\begin{proof} It was shown above that $\smile$ and $[\ , \ ]$ have the required degrees, and $[ \ , \ ]$ satisfies the conditions of a  graded Lie algebra.

The required associativity and graded commutativity of $\smile$ follow from Lemma \ref{gc} and Corollary \ref{ass}. Thus, it remains to prove the Poisson identity.

For an $l$-tuple $(f_1,\dots,f_l)\in \DerC_{p_1}\times\dots\times \DerC_{p_l}$ and $h\in \DerC_q$, we define $m_l^h(f_1,\dots,f_l)\in \DerC_{\sum\limits_{i=1}^lp_i+q}$ by the equality
$$
m_l^h(f_1,\dots,f_l)_n=\sum\limits_{\sum\limits_{k=0}^{l}i_k+\sum\limits_{k=1}^{l}j_k=n}(1^{\ot_Ai_0}\ot_A(f_1)_{j_1}\ot_A\dots\ot_A(f_l)_{j_l}\ot_A1^{\ot_Ai_{l}})h_{\sum\limits_{k=0}^{l}i_k+l}.
$$
Let us take $f\in \DerC_n$, $g\in \DerC_r$ and $h\in \DerC_q$.
Direct calculations show that
\begin{multline*}
[f\smile g,h]=(f\smile g)\circ h+(-1)^{(r+n+1)(q+1)}m_3(h,f,g)-(-1)^{(r+n+1)q}(h\circ f)\smile g\\
-(-1)^{r(q+1)}m_3(f,h,g)-(-1)^{rq}f\smile(h\circ g)-(-1)^{q}m_3(f,g,h)=(f\smile g)\circ h\\
+(-1)^{(r+n+1)(q+1)}m_3(h,f,g)-(-1)^{r(q+1)}m_3(f,h,g)-(-1)^{q}m_3(f,g,h)\\
-(-1)^{(r+1)q}(f\circ h)\smile g-f\smile(g\circ h)+(-1)^{(r+1)q}[f,h]\smile g+f\smile[g, h],
\end{multline*}
\begin{multline*}
[\delta,m^h_2(f,g)]=m^h_3(\delta,f,g)+m^h_2(\delta\circ f,g)+(-1)^nm^h_3(f,\delta,g)+(-1)^nm^h_2(f,\delta\circ g)\\
+(-1)^{r+n}m^h_3(f,g,\delta)-(-1)^{r+n+q}m^h_2(f,g)\circ \delta,
\end{multline*}
\begin{multline*}
m_2^{\delta\circ h}(f,g)=(-1)^{r+n}m^h_3(\delta,f,g)+(-1)^rm^h_2(f\circ \delta,g)+(-1)^{r}m^h_3(f,\delta,g)\\
+m^h_2(f,g\circ \delta)+m^h_3(f,g,\delta)+(-1)^r(f\smile g)\circ h,
\end{multline*}
\begin{multline*}
m_2^{h\circ \delta}(f,g)=(-1)^{(r+n)q+n}m_3(h,f,g)+(-1)^{r(q+1)}(f\circ h)\smile g\\
+(-1)^{(r+1)q}m_3(f,h,g)+(-1)^{r+q}f\smile (g\circ h)+(-1)^rm_3(f,g,h)+m_2^h(f, g)\circ \delta.
\end{multline*}
Hence,
\begin{multline*}
[f\smile g,h]-f\smile [g,h]-(-1)^{(r+1)l}[f,h]\smile g=(-1)^rm_2^{[\delta, h]}(f,g)\\
+(-1)^nm^h_2([\delta,f],g)+m^h_2(f,[\delta, g])-(-1)^{n}[\delta,m^h_2(f,g)].
\end{multline*}
In particular, if $f,g,h \in \Coder^{\infty}_{\mathscr{Q}}(C)$, then
$$[f\smile g,h]-f\smile [g,h]-(-1)^{(r+1)l}[f,h]\smile g=(-1)^{n+1}[\delta,m^h_2(f,g)]\in\Inn^{\infty}_\mathscr{Q}(C).$$
\end{proof}

\begin{rema}
One should note the similarity between our definition of the higher multiplications $m_l$ on $\mrm{Der}_\msc{Q}(\Omega C)$ and the brace operations which appear on usual Hochschild cochains~\cite[Section 3]{vg95}.  Indeed, the $m_l$ are essentially given by certain dualized brace operations.
\end{rema}

\section{$A_{\infty}$-coalgebra structure on a projective resolution}\label{sec:proj-res}

In this section, we show how to put an $A_{\infty}$-coalgebra structure on a projective $A$-bimodule resolution of the $\kk$-algebra $A$.
We will consider resolutions of $A$ that are shifted:
Let $P = P'[-1]$ where $P'$ is a projective $A$-bimodule resolution of $A$, so that $P_1 = P_0'$ and there is an $A$-bimodule chain map $\mu_P:P\rightarrow A$ of degree $-1$ that induces an isomorphism ${\rm H}_*(P)\cong A[-1]$. 
We will call this complex $P$  a {\it shifted projective bimodule resolution} of $A$. 

Let $X$ and $Y$ be $A$-bimodule complexes and $f:X\rightarrow Y$  a homogeneous $A$-bimodule homomorphism (equivalently an $A^e$-module homomorphism where
$A^e=A\ot A^{op}$). 
Denote the differential on $\Hom_{\mathscr{Q}}(X,Y)$ by $\partial$,
that is 
$$
     \partial(f) = d_Yf-(-1)^{|f|}fd_X.
$$
We will write also $f\sim g$ if $f$ is homotopic to $g$.

\subsection{Existence of an $A_{\infty}$-coalgebra structure}
We show that any projective resolution of the $A^e$-module $A$
has an $A_{\infty}$-coalgebra structure of a particular type. 

\begin{theorem}\label{Acostruc}
Let $(P,\mu_P)$ be a shifted bimodule resolution of $A$. Then $P$ admits a weakly counital
$A_{\infty}$-coalgebra structure $\delta$ with $\delta_1=d_{P}$.
\end{theorem}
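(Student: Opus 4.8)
The plan is to construct the maps $\delta_n : P \to P^{\ox_A n}$ inductively on $n$, with $\delta_1 = d_P$ given, and at each stage use the acyclicity of $P$ together with the projectivity of its terms to lift obstruction cycles to the desired chain maps. The key structural fact driving the induction is equation~\eqref{eqn:d-delta}: having chosen $\delta_1, \dots, \delta_{N-1}$ satisfying~\eqref{eq:comp} in all lower degrees, the right-hand side
\[
\omega_N := -\sum_{\stackrel{1<s<N}{r+s+t=N}} (1^{\ox_A r}\ox_A \delta_s\ox_A 1^{\ox_A t})\,\delta_{N-s+1}
\]
is a well-defined degree-$1$ element of $\Hom_{\mathscr{Q}}(P, P^{\ox_A N})$ built from already-constructed data, and we must produce $\delta_N$ with $\partial(\delta_N) = \omega_N$. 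So the induction reduces to two things at each step: (a) checking that $\omega_N$ is a \emph{cycle} in the Hom complex, i.e.\ $\partial(\omega_N) = 0$; and (b) checking that $\omega_N$ is then a \emph{boundary}, i.e.\ that its class in $H^1\bigl(\Hom_{\mathscr{Q}}(P, P^{\ox_A N})\bigr)$ vanishes, so that a preimage $\delta_N$ exists.

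For step (a), the identity $\partial(\omega_N) = 0$ is the standard ``the obstruction to the next stage is itself a cocycle'' computation for $A_\infty$-structures: one expands $\partial$ of the quadratic expression, applies $\partial(\delta_s) = \omega_s$ to each factor via the already-verified instances of~\eqref{eqn:d-delta}, and watches the cubic terms cancel in pairs by reindexing — this is purely formal, an analogue of the Jacobi-type cancellation already used in the excerpt (e.g.\ the manipulations in the proof of Lemma~\ref{derivate}), and I would invoke it rather than grind it out. For step (b), the point is that $P^{\ox_A N}$ is a bounded-below complex of projective $A^e$-modules (a tensor product over $A$ of projective bimodules is $A^e$-projective, since $P$ is $A^e$-projective and each term is a summand of a product of copies of $A^e$), and $P \to A[-1]$ is a projective resolution; hence $\Hom_{\mathscr{Q}}(P, P^{\ox_A N})$ computes $\Ext^{*}_{A^e}(A[-1], P^{\ox_A N})$, and the relevant obstruction lives in a degree where we can kill it. Concretely, $\mu_P : P \to A[-1]$ being a quasi-isomorphism between complexes of projectives, it is a homotopy equivalence onto $A[-1]$ in the appropriate sense, so any cycle $\omega_N : P \to P^{\ox_A N}$ of degree $1$ with $\partial(\omega_N)=0$ that additionally vanishes ``below'' can be corrected to a coboundary; the classical way to see there is no obstruction is the comparison theorem — define $\delta_N$ on the projective $P$ by lifting through the acyclicity of $P^{\ox_A N}$ in positive degrees. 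One must also arrange the weak counit: take $\mu := \mu_P$, note $\delta_1 = d_P$, and choose $\delta_2$ so that $(\mu_P \ox_A \mu_P)\delta_2 = \mu_P$ (a diagonal approximation chain map, which exists by the comparison theorem since both $P$ and $P\ox_A P$ resolve $A[-1]$ up to the shift), then check inductively that the higher $\delta_n$ can be chosen with $(\mu_P^{\ox_A n})\delta_n = 0$ for $n > 2$ — this is automatic if we build $\delta_n$ as a lift landing in the augmentation-ideal part, or can be imposed by modifying $\delta_n$ by an inner term without disturbing~\eqref{eq:comp}.

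The main obstacle I expect is step (b), specifically making precise \emph{why} the obstruction class vanishes rather than merely lies in $\Ext$. The clean argument is: $P$ consists of projective $A^e$-modules and is a resolution of $A[-1]$, while $P^{\ox_A N}$ is a bounded-below complex of projectives; by the dual comparison/lifting theorem, given the cycle $\omega_N \in \Hom_{\mathscr{Q}}(P, P^{\ox_A N})_1$ with $\partial(\omega_N)=0$, we solve $d\,\delta_N \pm \delta_N d = \omega_N$ degree by degree starting from the top of $P$, using at each internal stage that $\omega_N$ hits a cycle in $P^{\ox_A N}$ which, by acyclicity in that degree, is a boundary, and lifting through the projective source term of $P$. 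The one genuinely delicate point is the very bottom degree, where $P$ has homology $A[-1]$ and one must check the induced map on homology, $A[-1] \to H_*(P^{\ox_A N})$, is forced to be zero in the relevant degree — this follows because $\omega_N$ is a sum of terms each factoring through some $\delta_s$ with $s \geq 2$, and on homology $\delta_2$ induces the (co)unital diagonal $A[-1] \to (A[-1])^{\ox_A 2}$ up to the shift bookkeeping, against which the alternating sum in $\omega_N$ telescopes to zero by coassociativity-on-homology. Handling this bottom-degree homology check carefully, and keeping the Koszul signs consistent throughout the induction, is where the real work lies; everything else is formal homological algebra.
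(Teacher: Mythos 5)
Your proposal is correct and follows essentially the same route as the paper: an induction on $n$ in which the quadratic obstruction $\sum_{i=2}^{n-1}\delta_i\circ\delta_{n-i+1}$ is shown to be a cocycle by the formal pre-Lie cancellation, and then shown to bound by verifying that it dies after applying $\mu_P^{\ot_A n}$ (equivalently, induces zero on homology), using the weak counit conditions imposed on the lower $\delta_i$. The paper packages your homology check as membership in the acyclic subcomplex $K_n=\ker\bigl((\mu_P^{\ot_A n})_*\bigr)\subset\Hom_{\mathscr{Q}}(P,P^{\ot_A n})$ (which also makes the weak counitality of $\delta_n$ automatic rather than imposed afterwards), and it actually carries out the two short computations you defer; note only that the obstruction is a degree $2$ cocycle, not degree $1$.
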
 

\begin{proof} There exists such a $\delta_2$ by uniqueness of projective resolutions, and the fact that $P\ot_A P$ is a (shift of a) resolution of $A$.  Consider the exact sequence of complexes
\[
0\to K_n\to \Hom_\msc{Q}(P,P^{\ot_A n})\stackrel{(\mu^{\ot_An})_\ast}{\relbar\joinrel\relbar\joinrel\relbar\joinrel\longrightarrow} \Hom_\msc{Q}(P,A)[-n]\to 0,
\]
where $K_n$ is simply the kernel of $(\mu^{\ot_A n})_*$.  Note that since $(\mu_P^{\ot_An})_\ast$ is a quasi-isomorphism, the complex $K_n$ is acyclic.  One sees this by considering the long exact sequence on cohomology.

Suppose that $n>2$ and we have already constructed $\delta_i$ for $i<n$ in such a way that $(\delta\circ\delta)_i=0$ for all $i<n$, and $\delta_i\in K_i$ for $i>2$.  Let us prove that there exists $\delta_n$ such that $(\delta\circ\delta)_n=0$. 
The required equality is
\begin{equation}\label{eq:600}
0=\sum\limits_{i=1}^n\delta_i\circ\delta_{n-i+1}=\sum\limits_{i=2}^{n-1}\delta_i\circ\delta_{n-i+1}+\partial(\delta_n).
\end{equation}
One applies $\mu_P^{\ot_A n}$ to see that $\sum_{i=2}^{n-1}\delta_i\circ\delta_{n-i+1}$ is in $K_n$.  Since $K_n$ is acyclic, the desired $\delta_n$ exists if and only if this sum is a cocycle.

Applying $\partial $ to $\sum\limits_{i=2}^{n-1}\delta_i\circ\delta_{n-i+1}$, we obtain
\begin{eqnarray*}
\partial\left(\sum\limits_{i=2}^{n-1}\delta_i\circ\delta_{n-i+1}\right)&=&\sum\limits_{i=2}^{n-1}\partial(\delta_i)\circ\delta_{n-i+1}-\sum\limits_{i=2}^{n-1}\delta_i\circ\partial(\delta_{n-i+1})\\
&=& - \sum\limits_{i=2}^{n-1}\left(\sum\limits_{j=2}^{i-1}\delta_j\circ\delta_{i-j+1}\right)\circ\delta_{n-i+1}
+\sum\limits_{i=2}^{n-1}\delta_i\circ\left(\sum\limits_{j=2}^{n-i}\delta_j\circ\delta_{n-i-j+2}\right)\\ &=&\sum\limits_{\scriptsize\begin{array}{c}i+j+k=n+2,\\i,j,k\ge 2\end{array}}\big( - (\delta_i\circ\delta_{j})\circ\delta_{k}+\delta_i\circ(\delta_{j}\circ\delta_{k})\big)=0.
\end{eqnarray*}
Hence, $\sum\limits_{i=2}^{n-1}\delta_i\circ\delta_{n-i+1}$ is a chain map from $P$ to $P^{\ot_An}$ of degree $2$, and we can find the desired solution $\delta_n$ to equation~\eqref{eq:600}.
\end{proof}

Of course, the theorem tells us that any shifted resolution $\mu_P:P\to A$ can be endowed with an $A_\infty$-coalgebra structure such that $\mu_P$ provides a weak counit on $P$.  Throughout this work, when we speak of an $A_\infty$-coalgebra structure on a shifted resolution, we mean one such that the structure map $\mu_P:P\to A$ provides a weak counit.

\subsection{Examples of $A_{\infty}$-coalgebra structures}

We next give some examples, beginning with bar resolutions  having
coassociative  coalgebra
structures (i.e., $A_{\infty}$-coalgebra structures $\delta$ with $\delta_m=0$
for all $m\geq 3$) in Example~\ref{ex:bar}.
Then we give in Example~\ref{ex:xn} an  
$A_{\infty}$-coalgebra structure on a projective bimodule resolution that 
includes some nonzero maps $\delta_m$ for $m\geq 3$.

\begin{example}\label{ex:bar} 
Let $P$ be the shifted bar resolution of the $\kk$-algebra $A$:
\[
   P: \hspace{2cm} \cdots \stackrel{d}{\longrightarrow} A^{\ot 4}
  \stackrel{d}{\longrightarrow} A^{\ot 3} 
  \stackrel{d}{\longrightarrow} A\ot A 
\]
where $\ot = \ot_{\kk}$ and 
$d (a_0\ot \cdots \ot a_{n+1}) = \sum_{i=0}^{n} (-1)^i a_0\ot\cdots 
\ot a_ia_{i+1}\ot\cdots \ot a_{n+1}$ for all $a_0,\ldots, a_{n+1}\in A$, 
with augmentation $\mu_P : A\ot A\rightarrow A$ given by the multiplication
on $A$. 
That is, $P_1 = A\ot A$, $P_0=A^{\ot 3}$, $P_{-1}=A^{\ot 4}$, and so on. 
The standard diagonal map $\Delta_2$ for $P$ may be modified
by including some signs to obtain
$\delta_2: P\ot _A P \rightarrow P$ 
defined by
\[
   \delta_2 (a_0\ot\cdots\ot a_{n+1})  =
    \sum_{i=0}^{n} (-1)^i(a_0\ot\cdots\ot a_i\ot 1) \ot_A (1\ot a_{i+1}\ot
   \cdots\ot a_{n+1})
\]
for all $a_0,\ldots, a_{n+1}\in A$.
We may take $\delta_i = 0$ for all $i\geq 3$.
\end{example}

\begin{example}\label{ex:xn}
Let $A= \kk [x]/(x^n)$, $n>2$. (The case $n=2$ is Koszul; 
see Example~\ref{ex:x2} and Section~\ref{sec:Koszul}.)   
Consider
\[
   P : \hspace{2cm} \cdots\stackrel{\cdot v}{\longrightarrow} A\ot A 
  \stackrel{\cdot u}{\longrightarrow} A\ot A
  \stackrel{\cdot v}{\longrightarrow} A\ot A
  \stackrel{\cdot u}{\longrightarrow} A\ot A  ,
\]
where $\ot = \ot_{\kk}$,
$ u = x\ot 1 - 1\ot x$ and $v = x^{n-1}\ot 1 + x^{n-2}\ot x + \cdots
+ 1\ot x^{n-1}$, with augmentation
$\mu_P: A\ot A\rightarrow A$ given by the multiplication on $A$. 
For each $i$, let $e_i$ denote the element $1\ot 1$ in $A\ot A$ in 
degree~$1-i$.
By convention, we set $e_i=0$ whenever $i<0$.
A diagonal map $\Delta_2$ is given in~\cite[Section~5]{Negron-Witherspoon}
in case $ n =p $ and char$(\kk)=p$.
We modify it 
to define $\delta_2$ here for all $n\geq 3$, independent of characteristic:
\begin{eqnarray*}
  \delta_2(e_{2i}) & = & \sum_{j+l=i} e_{2j}\ot_A e_{2l} 
      -\sum_{\substack{j+l=i\\ a+b+c=n-2}} x^a e_{2j+1}\ot_A x^be_{2l-1}x^c ,\\
  \delta_2(e_{2i+1}) &=& \sum_{j+l=i} e_{2j}\ot_A e_{2l+1} -
   \sum_{j+l=i} e_{2j+1}\ot_A e_{2l} .
\end{eqnarray*}
For $3\le m\le n$, let
\[
\begin{aligned}
  & \delta_m (e_{2i}) \\
  & = (-1)^{m+1} \!\!\! 
   \sum_{\substack{j_1+\cdots +j_m = i-1\\ a_1+\cdots + a_{m+1} = n-m}}
     \!\!  x^{a_1}e_{2j_1+1} \ot_A x^{a_2}e_{2j_2+1}\ot_A\cdots\ot_A 
       x^{a_{m-1}}e_{2j_{m-1}+1}\ot_A  x^{a_m} e_{2j_m+1}x^{a_{m+1}} ,
\end{aligned}
\]
$\delta_m (e_{2i+1})=0$ for all $i$,
and $\delta_m = 0$ for all $m > n$. 
This is an $A_{\infty}$-coalgebra structure on $P$, as
may be checked via some lengthy calculations.
\end{example}

\section{$A_{\infty}$-coderivations on a projective resolution}\label{sec:main}

In this section, we work with an arbitrary shifted bimodule 
resolution $P$ (i.e.~$P = P'[-1]$ for some projective $A$-bimodule
resolution $P'$ of $A$).
We will show that Hochschild cocycles on $P$ give rise to
$A_{\infty}$-coderivations and 
Hochschild coboundaries correspond to inner $A_{\infty}$-coderivations.
We will give some examples and prove our main theorem. 

\subsection{Main Theorem}
We will prove the following 
isomorphism of Gerstenhaber algebras.
The notation $\Coder^{\infty}_{\mathscr{Q}}(P)$ and 
$\Inn^{\infty}_\mathscr{Q}(P)$ were introduced in Section~\ref{subsec:ccc},
and the associative and Lie products on the quotient space 
$\Coder^{\infty}_\mathscr{Q}(P)/\Inn^{\infty}_\mathscr{Q}(P)$ were
introduced in Sections~\ref{subsec:ccc} and~\ref{subsec:cupprod}. 

\begin{theorem}\label{mainthm}
Let $P$ be a shifted projective bimodule resolution of $A$ with $A_{\infty}$-coalgebra
structure $\delta$. 
There is an isomorphism of Gerstenhaber algebras 
\[
    \HH^*(A)\cong
   \big(\Coder^{\infty}_\mathscr{Q}(P)/\Inn^{\infty}_\mathscr{Q}(P)\big)[1] .
\]
\end{theorem}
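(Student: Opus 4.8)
The plan is to construct mutually inverse maps between $\HH^*(A)$ and $\big(\Coder^\infty_\mathscr{Q}(P)/\Inn^\infty_\mathscr{Q}(P)\big)[1]$ and check they are compatible with both the cup product and the Gerstenhaber bracket. The starting point is the identification $\Der_\mathscr{Q}(\Omega P)\cong\Hom_\mathscr{Q}(P,\Omega P)=\prod_{n\ge 0}\Hom_\mathscr{Q}(P,P^{\ox_A n})$ recorded in Section~\ref{sect:coders}. First I would construct the forward map: given a Hochschild cocycle $f$ on $P$, i.e.\ a cocycle in $\Hom_{A^e}(P,P)$ computing $\HH^*(A)$ via the quasi-isomorphism $\mu_P$, Theorem~\ref{lift_exist} (quoted from Section~\ref{sec:main}, which the reader will have reached) produces an $A_\infty$-coderivation $\{f_n\}$ with $f_0=f$; sending the class of $f$ to the class of $\{f_n\}$ in $\Coder^\infty_\mathscr{Q}(P)/\Inn^\infty_\mathscr{Q}(P)$ gives a candidate map, after I check it is well-defined (a coboundary $\partial(s)=\mu_P$-null-homotopic cocycle maps into $\Inn^\infty$, which again uses the lifting construction applied to homotopies). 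For the backward map, I would send an $A_\infty$-coderivation $g=\{g_n\}$ to the class of its $0$-component $g_0\in\Hom_{A^e}(P,A)$ — equivalently compose with the weak counit. The equation~\eqref{eq:Ccomp} with $N=1$ shows $g_0$ is a cocycle in $\Hom_{A^e}(P,A)$, and an inner $A_\infty$-coderivation $[\delta,b]$ has $0$-component $\partial(b_0)$, hence maps to $0$; so this descends to the quotient.

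The core of the argument is that these two maps are inverse to each other, which reduces to showing that the $0$-component map $\Coder^\infty_\mathscr{Q}(P)\to Z^*\Hom_{A^e}(P,A)$ induces an isomorphism on the relevant quotients. Surjectivity onto cohomology is exactly Theorem~\ref{lift_exist} (every cocycle in $\Hom_{A^e}(P,A)$, pulled back along $\mu_P$ to a Hochschild cocycle on $P$, extends). For injectivity I would argue: if $\{g_n\}$ is an $A_\infty$-coderivation whose $0$-component is a coboundary in $\Hom_{A^e}(P,A)$, then after subtracting an inner coderivation I may assume $g_0=0$, and then I must show $\{g_n\}$ is itself inner. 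This is the heart of the proof and I expect it to be the main obstacle: one builds $b=\{b_n\}$ with $[\delta,b]=g$ by induction on $n$, where at each stage the obstruction to solving for $b_n$ lies in the acyclic complex $K_n=\ker(\mu_P^{\ox_A n})_*$ appearing in the proof of Theorem~\ref{Acostruc} — so one must verify the relevant partial sum is a cocycle landing in $K_n$, exactly as in that proof, but now with the coderivation condition~\eqref{eq:Ccomp} supplying the needed identities. The bookkeeping with Koszul signs and the convolution operations $m_l$ from Definition~\ref{defn-cup} is where the real work is.

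Having established the isomorphism of vector spaces, I would then verify multiplicativity. For the cup product: by Corollary~\ref{ass} the operation $\smile=m_2$ descends to the quotient, and I must match $f\smile g$ (computed via Definition~\ref{defn-cup}) with the usual cup product on $\HH^*(A)$. The cleanest route is to note that on $0$-components $(f\smile g)_0$ is, up to sign, $(f_0\ox_A g_0)\delta_2$, which is precisely the formula for the cup product on $\Hom_{A^e}(P,A)$ induced by the diagonal $\delta_2$ (the standard comparison with the cup product on the bar resolution, via $\mu_P$, then finishes this). For the bracket: Lemma~\ref{gc} already tells us $f\smile g\pm g\smile f$ is inner, consistent with graded-commutativity, and one wants $[f,g]$ as defined componentwise in Section~\ref{sect:components} to reduce on $0$-components to the Gerstenhaber bracket. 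Here I would invoke the connection, promised in the introduction and presumably proved in Section~\ref{sec:main}, between the $1$-component $f_1$ of the lifted coderivation and the homotopy lifting of~\cite{Volkov}: since the Gerstenhaber bracket is computed by homotopy liftings in~\cite{Volkov}, and $[f,g]_0$ unwinds to an expression in $f_0,g_0,f_1,g_1$ matching Volkov's formula, the identification of brackets follows. Finally, naturality in the resolution (different choices of $\delta$ or of $P$ give the same answer) follows because the construction is compatible with comparison quasi-isomorphisms, but since the statement fixes $P$ and $\delta$ this is not strictly needed.
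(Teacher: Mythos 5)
Your proposal follows essentially the same route as the paper: the forward map is Theorem~\ref{lift_exist}, the backward map is the $0$-component, the cup product is matched via $(\alpha_f\smile\alpha_g)_0=(f\ot_A g)\delta_2$, and the bracket is matched by identifying $(\alpha_f)_1$ (up to sign) with a homotopy lifting and invoking Volkov's theorem --- which is exactly Lemma~\ref{1comp_homlif} plus \cite[Theorem 4]{Volkov} in the paper. The injectivity step you single out as the heart of the argument is precisely the paper's Lemma~\ref{InnCoder}.

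One caution on that step: your sketch says the obstruction to solving for $b_n$ ``lies in the acyclic complex $K_n$,'' but this is not automatic. The complex $\Hom_\msc{Q}(P,P^{\ot_A n})$ is not acyclic, and the cocycle obstruction at stage $n$ is only seen to be a coboundary after checking that its image under $\mu_P^{\ot_A n}$ is null-homotopic. For odd $n$ this follows from a telescoping argument using $(1\ot_A\mu_P-\mu_P\ot_A 1)\delta_2\sim 2$; for even $n$ the alternating sum degenerates and the claim is false for the na\"ive choice of $b_{n-1}$ --- one must first go back and correct $b_{n-1}$ by a cocycle (the paper takes $-(1^{\ot_A(n-1)}\ot_A\mu_P)\alpha_n$) before the stage-$n$ obstruction becomes trivial. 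So the induction as you state it would stall at the first even stage; with that parity-dependent correction it goes through, and the rest of your argument matches the paper's.
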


The proof is deferred to Section~\ref{subsec:proof} and consists
of several steps, relying on some lemmas and preliminary results.  Our method of proof is very direct.  Indeed, one can view Theorem~\ref{mainthm} as a statement about the existence of solutions to an infinite sequence of equations.  We show directly that, in fact, one can produce solutions to all of the given equations.
\par
 
In Lemma~\ref{InnCoder} we explicitly identify the inner 
$A_{\infty}$-coderivations among all $A_{\infty}$-coderivations. 
We then give in Theorem~\ref{lift_exist} a construction of an \
$A_{\infty}$-coderivation corresponding to a Hochschild cocycle, 
and show the connection to the homotopy lifting method of~\cite{Volkov}. 
For this purpose, in Sections~\ref{subsec:inner}--\ref{subsec:proof}, 
we fix a shifted projective bimodule resolution $P$ 
of $A$ and a weakly counital $A_{\infty}$-coalgebra structure on $P$.
By Theorem~\ref{Acostruc}, such a structure always exists.

\subsection{A remark on methods}

Let us consider an $A$-bimodule resolution $\mu_{P'}:P'\to A$.  For any positive $n$, the tensor power $\mu_{P'}^{\ot_An}:(P')^{\ot_A n}\to A$ is also a resolution of $A$, and there are quasi-isomorphisms 
\[
1^{\ot_A m}\ot_A \mu_{P'}^{\ot_A(n-m)}:P^{\ot_A n}\to P^{\ot_A m}
\]
for each $m\le n$.  In the shifted case, where $P=P'[-1]$ and $\mu_P=\mu_{P'}$ after forgetting the grading, we still have quasi-isomorphisms $1^{\ot_A m}\ot_A \mu_P^{\ot_A(n-m)}:P^{\ot_A n}\to P^{\ot_A m}$.
\par

Since each positive power $P^{\ot_A w}$ is bounded above and projective, the induced maps
\[
(1^{\ot_A m}\ot\mu^{\ot_A (n-m)})_\ast:\Hom_\msc{Q}(P^{\ot_A w},P^{\ot_A n})\to\Hom_\msc{Q}(P^{\ot_A w},P^{\ot_A m})
\]
are all quasi-isomorphisms.  In particular, each map $(\mu^{\ot_A n})_\ast$ to $\Hom_\msc{Q}(P^{\ot_A w},A)$ is a quasi-isomorphism.  So, if we would like to know whether a given cocycle $f$ in a hom complex $\Hom_\msc{Q}(P^{\ot_A w},P^{\ot_A n})$ is a coboundary, it suffices to check whether $f$ is a coboundary after composing with some power of $\mu_P$.
\par

We will make copious use of this fact throughout the section in order to prove that a given map $f$ admits a bounding morphism $\phi$, i.e.\ $\phi$ with $\partial(\phi)=f$.  Most often, we will check after composing with the highest possible power $\mu^{\ot_A n}$.  For example, we have the following useful lemma.

\begin{lemma}\label{lem:940}
Let $P$ be a shifted bimodule resolution of $A$ which is endowed with a weakly counital $A_\infty$-coalgebra structure, as in Theorem~\ref{Acostruc}.  Then the two maps $(1\ot \mu_P)\delta_2$ and $-(\mu_P\ot 1)\delta_2:P\to P$ are homotopic to the identity.
\end{lemma}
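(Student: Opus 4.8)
The plan is to exploit the fact, emphasized in the preceding ``remark on methods,'' that a cocycle in $\Hom_\msc{Q}(P,P)$ is a coboundary as soon as it becomes one after composing with $\mu_P$. Both $(1\ot\mu_P)\delta_2$ and $-(\mu_P\ot 1)\delta_2$ are chain maps $P\to P$ of degree $0$ — indeed $\delta_2$ is a chain map $P\to P^{\ot_A 2}$ by $\delta_1=d_P$ and the $N=2$ case of~\eqref{eq:comp}, and $1\ot\mu_P$, $\mu_P\ot 1$ are chain maps since $\mu_P$ is — so each is a cocycle in $\Hom_\msc{Q}(P,P)$, as is their difference with the identity map $1_P$ (also a cocycle). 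Since $(\mu_P)_\ast:\Hom_\msc{Q}(P,P)\to\Hom_\msc{Q}(P,A)$ is a quasi-isomorphism, it suffices to show $\mu_P(1\ot\mu_P)\delta_2\sim\mu_P$ and $-\mu_P(\mu_P\ot 1)\delta_2\sim\mu_P$ in $\Hom_\msc{Q}(P,A)$.

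First I would handle the left-hand composite. We have $\mu_P(1\ot\mu_P)=\mu_P^{\ot_A 2}$ and $\mu_P(\mu_P\ot 1)=\mu_P^{\ot_A 2}$ up to the Koszul sign that is already accounted for in the convention, so both composites reduce to understanding $\mu_P^{\ot_A 2}\delta_2:P\to A$. Now the weak counit axiom says $(\mu_P\ot_A\mu_P)\delta_2=\mu_P$ on the nose — but wait, Theorem~\ref{Acostruc} only guarantees a weakly counital structure, and by the definition of weakly counital this identity $(\mu\ot_A\mu)\delta_2=\mu$ holds exactly. So in fact one of the two maps, namely $(1\ot\mu_P)\delta_2$, already satisfies $\mu_P\cdot(1\ot\mu_P)\delta_2=\mu_P$ exactly (modulo keeping track of the Koszul sign in the identification $\mu_P(1\ot\mu_P)=\mu_P^{\ot_A2}$), hence is a cocycle in $\Hom_\msc{Q}(P,P)$ mapping to the same class as $1_P$ under the quasi-isomorphism $(\mu_P)_\ast$, hence is homotopic to $1_P$.

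The slightly less immediate half is $-(\mu_P\ot 1)\delta_2$. Here I would use the cocommutativity-up-to-homotopy of $\delta_2$: the two composites $(1\ot\mu_P)\delta_2$ and $(\mu_P\ot 1)\delta_2$ (with the appropriate sign) are the two ``partial counit'' applications of $\delta_2$, and one can compare them by composing with $\mu_P$ and invoking that $\mu_P^{\ot_A2}\delta_2=\mu_P$, together with the Koszul sign relation $(1\ot_A g)(f\ot_A 1)=(-1)^{|f||g|}(f\ot_A g)$ recorded in the introduction, applied with $f=g=\mu_P$ (both of degree $-1$), which produces exactly the sign $-1$ that makes $-(\mu_P\ot 1)\delta_2$ line up with $(1\ot\mu_P)\delta_2$ after composing with $\mu_P$. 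Concretely, $\mu_P\cdot\big(-(\mu_P\ot 1)\delta_2\big) = -\mu_P^{\ot_A 2}\delta_2\cdot(\text{sign}) = \mu_P$, so again this map is a cocycle in the same cohomology class as $1_P$, hence homotopic to $1_P$. The main obstacle — really the only thing requiring care — is bookkeeping of the Koszul signs arising from $|\mu_P|=-1$ and the degree shift $P=P'[-1]$, to confirm that the asserted signs (no sign on the first map, a minus on the second) are precisely the ones forced by the convention; once the sign is pinned down, both statements collapse to the weak counit identity plus the quasi-isomorphism property of $(\mu_P)_\ast$.
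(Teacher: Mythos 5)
Your proposal is correct and follows essentially the same route as the paper: both arguments reduce the claim to the weak counit identity $(\mu_P\ot_A\mu_P)\delta_2=\mu_P$ by composing with $\mu_P$ and then invoke the quasi-isomorphism $(\mu_P)_\ast\colon\Hom_\msc{Q}(P,P)\to\Hom_\msc{Q}(P,A)$ from the preceding remark on methods. Your sign bookkeeping is also right — the factor $(-1)^{|\mu_P||\mu_P|}=-1$ in $(1\ot_A\mu_P)(\mu_P\ot_A 1)=-\mu_P\ot_A\mu_P$ is exactly what forces the minus sign on the second map — and in fact all three maps agree on the nose (not merely up to homotopy) after applying $\mu_P$.
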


\begin{proof}
Recall that $\delta_2$ satisfies the equation $(\mu_P\ot \mu_P)\delta_2=\mu_P$.  Hence the endomorphisms $(1\ot\mu_P)\delta_2$, $-(\mu_P\ot 1)\delta_2$, and the identity map $1_P$ all agree after applying $\mu_P:P\to A$.  It follows that all of the given maps are homotopic in $\End_\msc{Q}(P)$.
\end{proof}

\subsection{Inner coderivations}\label{subsec:inner}

Recall that $P=(P,\mu_P)$ is a shifted bimodule resolution with a corresponding weakly unital $A_\infty$-coalgebra structure.  We first give a characterization of inner $A_{\infty}$-coderivations. 

\begin{Lemma}\label{InnCoder}
Suppose that $\alpha\in \Coder^{\infty}_\mathscr{Q}(P)$. Then $\alpha\in\Inn^{\infty}_\mathscr{Q}(P)$ if, and only if, $\alpha_0\sim 0$.
\end{Lemma}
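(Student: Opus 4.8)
The plan is to prove both directions of the equivalence by relating the 0-component of an $A_\infty$-coderivation to its cocycle condition in the complex $\Hom_\msc{Q}(P,A)[-0]$.

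First I would analyze the $N=0$ and $N=1$ cases of the defining equation~\eqref{eq:Ccomp}. Writing out~\eqref{eq:Ccomp} for small $N$, using that $\delta_1 = d_P$, that $\delta_0$ vanishes, and that $f_0 : P \to A$, one finds that the $N=1$ equation says precisely that $\mu_P f_1 \pm f_0 \mu_P \pm d_A f_0 \pm \ldots$ expresses $\partial(f_0)$ (the differential in $\Hom_\msc{Q}(P,A)$, with $A$ in degree $0$) in terms of $f_1$ and $\delta_2$; more precisely $f_0$ becomes a cocycle in $\Hom_\msc{Q}(P,A)$ once we note $A$ has zero differential and $\mu_P d_P = 0$. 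So every $A_\infty$-coderivation $\alpha$ has $\alpha_0$ a cocycle in $\Hom_\msc{Q}(P,A)$, and the condition $\alpha_0 \sim 0$ is equivalent to $\alpha_0$ being a coboundary there, i.e.\ $\alpha_0 = \partial(h)$ for some $h : P \to A$ of degree $|\alpha|-1$. Since $(\mu_P^{\ot_A n})_*$ are quasi-isomorphisms (the ``remark on methods'' subsection), membership in $\Inn^\infty_\msc{Q}(P)$ should be detectable after applying $\mu_P^{\ot_A n}$, and the only surviving component after such projection is controlled by $\alpha_0$.

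For the forward direction, suppose $\alpha = [\delta, b]$ is inner, where $b = (b_n)_{n\ge 0}$ is a tuple of maps of degree $|\alpha|-1$. I would compute $[\delta,b]_0$ directly from the componentwise bracket formula: only the terms with output in $A = P^{\ot_A 0}$ contribute, and using $\delta_0 = 0$ one gets $[\delta,b]_0 = \partial(b_0)$ (again with $A$ in degree $0$, so this is $-(-1)^{|b_0|} b_0 d_P$, possibly up to a $\mu$-term if one allows $b$ to have a component landing in $A$ composed with $\delta_1$ — but $\delta_1 = d_P$ so it collapses), hence $\alpha_0 = \partial(b_0) \sim 0$. For the converse, suppose $\alpha \in \Coder^\infty_\msc{Q}(P)$ with $\alpha_0 \sim 0$, say $\alpha_0 = \partial(b_0)$. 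Then $\alpha - [\delta, b]$, where $b = (b_0, 0, 0, \ldots)$, is again an $A_\infty$-coderivation (the bracket of $\delta$ with any tuple is one, by the discussion after Definition~\ref{def:coder_comp}; and $\Coder^\infty$ is closed under subtraction since it is the space of cocycles), and its 0-component vanishes. So it suffices to show: an $A_\infty$-coderivation $\beta$ with $\beta_0 = 0$ is inner. Here I would build $b = (b_n)_{n\ge 1}$ inductively so that $\beta = [\delta, b]$: at stage $N$, the equation $\beta_N = [\delta,b]_N$ has the form $\beta_N - (\text{terms in } b_{<N}, \delta) = \partial(b_N)$ in $\Hom_\msc{Q}(P, P^{\ot_A N})$, and one checks the right-hand side is a cocycle (using that $\beta$ and $\delta$ satisfy their respective structure equations, exactly as in the proof of Theorem~\ref{Acostruc}), then uses acyclicity of the relevant kernel complex $K_N$ — or rather, since there is no $\mu$-obstruction here because $\beta_0 = 0$, one uses directly that $\Hom_\msc{Q}(P,P^{\ot_A N})$ has its cohomology concentrated so that the obstruction class, which would live in $\Hom_\msc{Q}(P,A)$-cohomology via $\mu^{\ot_A N}$, vanishes because it is detected by $\beta_0 = 0$.

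The main obstacle I expect is the inductive construction in the converse direction: verifying that at each stage the would-be bounding map exists, which amounts to showing a certain explicit combination of $\beta_i$'s and $\delta_j$'s is a coboundary, not merely a cocycle. The cocycle check is a bookkeeping computation parallel to the one in Theorem~\ref{Acostruc}; the coboundary conclusion relies on the quasi-isomorphism $(\mu^{\ot_A N})_*$ from the ``remark on methods'' subsection together with the fact that applying $\mu^{\ot_A N}$ to the obstruction yields something governed by $\beta_0 = 0$, hence the obstruction is a coboundary. Managing the signs in the componentwise bracket formula and correctly identifying which terms survive projection to $A$ will require care, but is routine.
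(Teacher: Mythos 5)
Your overall strategy matches the paper's: the forward direction via $[\delta,\beta]_0=\partial(\beta_0)$ is correct and is exactly what the paper does, and the reduction of the converse to killing the components of $\alpha$ one at a time by subtracting inner coderivations is also the paper's plan. But the heart of the converse --- showing that at each stage the obstruction is actually a coboundary --- is asserted rather than proved, and the assertion as you state it (``the obstruction \ldots vanishes because it is detected by $\beta_0=0$'') is not correct. The obstruction at stage $m$ is $\mu_P^{\ot_A m}\alpha_m$, and its vanishing up to homotopy is \emph{not} a formal consequence of $\alpha_0=\cdots=\alpha_{m-1}=0$ read off from the degree-$m$ structure equation; it is extracted from the degree-$(m+1)$ structure equation $[\delta,\alpha]_{m+1}=0$ by applying $\mu_P^{\ot_A(m+1)}$ and using the weak counit identity $(\mu_P\ot_A\mu_P)\delta_2=\mu_P$ together with the homotopy $(1\ot_A\mu_P-\mu_P\ot_A 1)\delta_2\sim 2$ of Lemma~\ref{lem:940}. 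When this is carried out, the alternating sum $\sum_{i=0}^{m-1}(-1)^{m-i-1}$ appears, and the computation only yields $\mu_P^{\ot_A m}\alpha_m\sim 0$ when $m$ is odd; when $m$ is even it yields only $2\,\mu_P^{\ot_A m}\alpha_m\sim 0$, which is vacuous in characteristic $2$.

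This is why your inductive scheme, which at stage $N$ introduces only the new component $b_N$ with the $b_{<N}$ already frozen, cannot be completed as written. The paper's proof handles the even case by allowing the bounding element at stage $m$ to have a component in degree $m-1$ as well: one adds the cocycle $\beta_{m-1}=-(1^{\ot_A(m-1)}\ot_A\mu_P)\alpha_m$ (which does not disturb the vanishing already achieved in components $<m$, since $\partial(\beta_{m-1})=0$), and then verifies that the corrected component $H=\alpha_m-[\delta,\beta]_m$ satisfies $\partial(H)=0$ and $\mu_P^{\ot_A m}H\sim 0$, whence $H=\partial(\beta_m)$. Your proposal contains neither the parity case split nor this correction mechanism, so the key step of the lemma is missing.
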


\begin{proof} 
If $\alpha\in\Inn^{\infty}_\mathscr{Q}(P)$, then by definition $\alpha=[\delta,\beta]$ for some $\beta\in\Der_\mathscr{Q}(\Omega C)$, and hence $\alpha_0=\partial(\beta_0)\sim 0$.

Suppose now that $\alpha_0\sim 0$. Then $\alpha_0= \partial (\beta_0)$ for
some $\beta_0$.
Define $\beta^{(0)}\in \mrm{Der}_\msc{Q}(\Omega C)$ by $\beta^{(0)}_0=\beta_0$ and $\beta^{(0)}_{>0}=0$.  Then $(\alpha-[\delta,\beta^{(0)}])_0=0$.

Let $\alpha^{(1)} = \alpha - [\delta,\beta^{(0)}]$ so that $\alpha_0^{(1)} = 0$.
We may assume that $\alpha$ is homogeneous of some degree~$n$.
We will prove by induction on $m$ that if $\alpha^{(m)}\in \Coder^{\infty}_\mathscr{Q}(P)_n$ is such that $\alpha^{(m)}_k=0$ for $0\le k\le m-1$, then there is some $\beta^{(m)}\in\Der_\mathscr{Q}(\Omega C)$ such that $\beta^{(m)}_i=0$ for $i\not\in\{m-1,m\}$ and $(\alpha^{(m)}-[\delta,\beta^{(m)}])_k=0$ for $0\le k\le m$. 
The assertion of the lemma will follow as then 
$\alpha = [\delta,  \sum_{i=0}^\infty \beta^{(i)}]$.
Let us prove this fact.

For simplicity, we will drop superscripts, writing $\alpha$ in place
of $\alpha^{(m)}$:
Assume that $\alpha\in \Coder^{\infty}_{{\mathscr{Q}}}(P)_n$ is such that $\alpha_k =0$ for $ 0\le k \le m-1$.
Applying Definition~\ref{def:coder_comp}, we thus have 
$\partial(\alpha_m)=0$ and $$\partial(\alpha_{m+1})+\sum\limits_{i=0}^{m-1}(1^{\ot_Ai}\ot_A\delta_2\ot_A1^{\ot_A(m-i-1)})\alpha_m-(-1)^n(\alpha_m\ot_A1+1\ot_A\alpha_m)\delta_2=0.$$
Suppose that $2\nmid m$.  Compose this equality with $\mu_P^{\ot_A(m+1)}$. 
Since $\mu_P^{\ot _A (m+1)} \partial (\alpha_{m+1})$ is a coboundary,
we get 
\begin{eqnarray*}
  0 & \sim & \mu_P^{\ot _A (m+1)} \sum_{i=0}^{m-1} (1^{\ot _A i}\ot_A \delta_2
    \ot_A 1^{\ot _A (m-i-1)} )\alpha_m 
     - (-1)^{n} \mu_P^{\ot _A (m+1)} (\alpha_m \ot_A 1 + 1\ot_A \alpha_m)
   \delta_2 \\
  &=& \sum_{i=0}^{m-1} (-1)^{ m-i-1} (\mu_P^{\ot_A i}\ot _A \mu_P\ot_A
   \mu_P^{\ot_A (m-i-1)} ) \alpha_m 
   - \mu_P^{\ot_A m} \alpha_m (1\ot_A \mu_P
   - \mu_P\ot_A 1)\delta_2 \\
 & = &  
     \mu_P^{\ot_Am}\alpha_m-\mu_P^{\ot_Am}\alpha_m(1\ot_A\mu_P-\mu_P\ot_A1)\delta_2.
\end{eqnarray*}
Now $(1\ot_A\mu_P)\delta_2$ and $ - (\mu_P\ot_A 1)\delta_2$ are both homotopic
to the identity map.
It follows that 
$(1\ot_A\mu_P-\mu_P\ot_A1)\delta_2\sim 2$.
Thus $\mu_P^{\ot_Am}\alpha_m\sim 0$, and consequently $\alpha_m\sim 0$. 
Now we can define $\beta_i=0$ for $i\not=m$ and choose $\beta_m$ such that $\alpha_m=\partial(\beta_m)$.

Suppose now that $2\mid m$.  Here we still have $\partial(\alpha_m)=0$. Define $\beta\in \mrm{Der}_\msc{Q}(\Omega C)$ by taking $\beta_i=0$ for $i\neq m-1$ and choose $\beta_{m-1}$ in such a way that $\partial(\beta_{m-1})=0$ and $\mu_P^{\ot_A(m-1)}\beta_{m-1}+\mu_P^{\ot_Am}\alpha_m=0$.  We can choose $\beta_{m-1}=-(1^{\ot_A (m-1)}\ot_A \mu_P)\alpha_m$, for example.  In this case $(\alpha-[\delta,\beta])_k=0$ for $0\le k\le m-1$.
Let us consider
\[
H=\alpha_m-\sum\limits_{i=0}^{m-2}(1^{\ot_Ai}\ot_A\delta_2\ot_A1^{\ot_A(m-i-2)})\beta_{m-1}-(-1)^n(\beta_{m-1}\ot_A1+1\ot_A\beta_{m-1})\delta_2.
\]
Applying $\partial$ to the above equation, since 
$\partial (\alpha_m)=0$, $\partial(\beta_{m-1})=0$, and $\partial (\delta_2)=0$,
we find that $\partial(H)=0$.
Applying $\mu_P^{\ot_A m}$ to the above definition of $H$, since
$(\mu_P\ot_A\mu_P)\delta_2 = \mu_P$ by hypothesis, we have 
$$
\mu_P^{\ot_Am}H\sim \mu_P^{\ot_Am}\alpha_m-\mu_P^{\ot_A(m-1)}\beta_{m-1}+\mu_P^{\ot_A(m-1)}\beta_{m-1}(1\ot_A\mu_P-\mu_P\ot_A1)\delta_2 .
$$
Since $\mu_P^{\ot_A m}\alpha_m = - \mu_P^{\ot_A (m-1)}\beta_{m-1}$ and 
$(1\ot_A \mu_P - \mu_P\ot_A 1)\delta_2 \sim 2$, it follows
that $\mu_P^{\ot_A m } H \sim 0$.
Thus we can choose $\beta_m$ in such a way that $(\alpha-[\delta,\beta'])_m=0$, for $\beta'$ with $\beta'_i=0$ whenever $i\notin \{m-1,m\}$, $\beta'_{m-1}=\beta_{m-1}$ and $\beta'_m=\beta_m$.
\end{proof}

\subsection{Lifting cocycles to $A_{\infty}$-coderivations}

In order to prove our main Theorem~\ref{mainthm}, we will need to 
prove that any Hochschild cocycle on $P$ can be lifted to an $A_{\infty}$-coderivation of $(P,\delta)$. For this purpose we need some additional 
technical lemmas and notation.

First, let us introduce for each $t\ge 1$ and integer $r$ a map $\psi_t^r\in \mrm{End}_\msc{Q}(P)$ of degree $1-t$.  We take
\[
\psi_t^r=\sum\limits_{u=0}^t(-1)^{ru}(\mu_P^{\ot_Au}\ot_A1\ot_A\mu_P^{\ot_A(t-u)})\delta_{t+1}
\]
for $t>1$, or $t=1$ and $r$ even, and otherwise, for all $l$,
\[
\psi_1^{2l+1}=(1\ot_A\mu_P-\mu_P\ot_A1)\delta_{2}-1 . 
\]
These maps will be used in Lemma~\ref{main_tec}, an essential step in
the proof of Theorem~\ref{lift_exist} that states a Hochschild cocycle
can be lifted to an $A_{\infty}$-coderivation.
We first prove some properties of the maps $\psi^r_t$.

\begin{Lemma}\label{psider}
The above functions $\psi_t^r$ satisfy the equations 
\[
\partial(\psi_t^r)+\sum\limits_{s=1}^{t-1}(-1)^{s}\psi_s^r\psi_{t-s}^{r-s}=0.
\]
\end{Lemma}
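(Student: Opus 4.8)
The identity to prove is a quadratic relation among the maps $\psi_t^r$, and the natural strategy is to reduce it to the Maurer-Cartan equation \eqref{eq:comp} satisfied by $\delta$. First I would set up the right bookkeeping: the map $\psi_t^r$ is built from $\delta_{t+1}$ by inserting the counit $\mu_P$ in all but one tensor slot, with alternating signs $(-1)^{ru}$ governed by the parity $r$. So I expect $\partial(\psi_t^r)$ to be computable directly from \eqref{eqn:d-delta}, which expresses $\partial(\delta_{t+1})$ as a sum of compositions $(1^{\ox_A a}\ox_A\delta_s\ox_A 1^{\ox_A b})\delta_{t-s+2}$. Applying the counit insertions to this expression should, after regrouping, produce exactly the terms $\pm\psi_s^r\psi_{t-s}^{r-s}$: a composition $\psi_s\psi_{t-s}$ is precisely ``insert $\delta_{t-s+1}$, then insert $\delta_{s+1}$ into one of its slots, with counits elsewhere,'' which matches the shape of the double-$\delta$ terms once one contracts the $\mu_P$'s.

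The main subtlety, and where I'd spend most of the effort, is the sign accounting and the anomalous case $t=1$ with $r$ odd. The two ``shapes'' of $\psi_1^r$ — the generic formula $(\mu_P\ox_A 1 + 1\ox_A\mu_P)\delta_2$ for $r$ even versus $\psi_1^{2l+1}=(1\ox_A\mu_P - \mu_P\ox_A 1)\delta_2 - 1$ for $r$ odd — differ by a sign on one term and by the subtracted identity map. The identity map contributes nothing to $\partial$ (it is a chain map: $\partial(1)=0$ since $\delta_1 = d_P$), but it does contribute to the quadratic sum $\sum_s (-1)^s \psi_s^r\psi_{t-s}^{r-s}$ whenever $s=1$ or $t-s=1$ lands in the odd-$r$ case. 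I would therefore organize the proof by isolating the contribution of the ``$-1$'' pieces separately, checking that they telescope against the discrepancy between $(1\ox_A\mu_P - \mu_P\ox_A 1)$ and $(1\ox_A\mu_P + \mu_P\ox_A 1)$; Lemma~\ref{lem:940} (that $(1\ox_A\mu_P)\delta_2\sim 1\sim -(\mu_P\ox_A 1)\delta_2$) will not suffice here since we need an exact equality, not a homotopy, so the alternating sign $(-1)^{ru}$ is doing real work. Concretely, I'd verify the $t=2$ and $t=3$ cases by hand to pin down the signs, then set up the general induction.

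For the general step, the plan is: expand $\partial(\psi_t^r)$ using $\partial(\delta_{t+1}) = -\sum_{1<s<t+1}(1^{\ox_A a}\ox_A\delta_s\ox_A 1^{\ox_A b})\delta_{t-s+2}$ from \eqref{eqn:d-delta}, noting also that $\psi_t^r$ has degree $1-t$ so the Koszul sign $(-1)^{|\psi_t^r|} = (-1)^{1-t}$ enters $\partial(\psi_t^r) = d_P\psi_t^r - (-1)^{1-t}\psi_t^r d_P$. Then in each double-$\delta$ term, contract the counits: a $\mu_P$ sitting in a slot adjacent to an inner $\delta_s$ either gets absorbed into $\delta_s$ (if that slot is one of the $s$ output slots of $\delta_s$) or passes through. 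Grouping the resulting terms by ``which copy of $\delta$ is outermost'' recovers $\psi_s^r\psi_{t-s}^{r-s}$; the shift $r \mapsto r-s$ in the inner superscript is exactly the shift in the alternating sign caused by the $s$ extra tensor factors produced by the outer $\delta_{t-s+1}$. The last thing to check is that the boundary terms $s=2$ and $s=t$ (extreme values) produce the $\psi_1$ factors with the correct odd/even superscript parity so that the anomalous formula kicks in consistently. I expect the whole argument to be a careful but ultimately mechanical sign chase, with the genuine content being the observation that the counit-inserted Maurer-Cartan equation factors through $\psi$-compositions; the hard part is purely combinatorial sign management around the odd-superscript base case.
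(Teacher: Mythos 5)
Your plan is essentially the paper's proof: expand $\partial(\psi_t^r)$ via the $A_\infty$ relation \eqref{eqn:d-delta}, contract the counit insertions case by case (using weak counitality so that terms with all slots of an inner $\delta_{s+1}$, $s>1$, hit by $\mu_P$ vanish, while the $s=1$ terms produce the extra identity-type contributions matching the ``$-1$'' in $\psi_1^{2l+1}$), and regroup into compositions $\psi_s^r\psi_{t-s}^{r-s}$ with the superscript shift coming from the sign bookkeeping. The paper carries this out as a single direct computation with a three-index sum and an explicit case analysis rather than an induction, but the decomposition and the key observations are the same.
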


\begin{proof} Let us introduce $\mu_k^l=\mu_P^{\ot_Ak}\ot_A1\ot_A\mu_P^{\ot_A(l-k)}$. Then, applying~(\ref{eqn:d-delta}), we have 
\begin{equation}\label{partpsi}
\partial(\psi_t^r)=\sum\limits_{u=0}^t\sum\limits_{s=1}^{t-1}\sum\limits_{v=0}^{t-s}(-1)^{ru+t+1}\mu_u^t\big(1^{\ot_Av}\ot_A\delta_{s+1}\ot_A1^{\ot_A(t-s-v)}\big)\delta_{t-s+1}.
\end{equation}
Note that
\begin{multline*}
\mu_u^t\big(1^{\ot_Av}\ot_A\delta_{s+1}\ot_A1^{\ot_A(t-s-v)}\big)\delta_{t-s+1}\\
=\begin{cases}(-1)^{vs+t-s}\big(\mu_{u-v}^{s}\delta_{s+1}\big)\big(\mu_v^{t-s}\delta_{t-s+1}\big),&\mbox{if $v\le u\le v+s$},\\
(-1)^{t-v}\mu_{u-1}^{t-1}\delta_t,&\mbox{if $s=1$ and $0\le v\le u-2$},\\
(-1)^{t-v-1}\mu_{u}^{t-1}\delta_t,&\mbox{if $s=1$ and $u+1\le v\le t-1$},\\
0,&\mbox{otherwise.}
\end{cases}
\end{multline*}
Substituting the obtained values into \eqref{partpsi} and considering separately the cases $2\mid t$ and $2\nmid t$, we find that $\partial(\psi_t^r)=\sum\limits_{s=1}^{t-1}(-1)^{s+1}\psi_s^r\psi_{t-s}^{r-s}$.
\end{proof}

We next use the maps $\psi^r_t$ in the definition of additional maps
$\phi_t$ that will be used in the proof of Lemma~\ref{main_tec}.

\begin{Lemma}\label{phi} 
There exist maps $\phi_t:P\rightarrow P$ ($t\ge 1$) such that $|\phi_t|=-t$ and
$$
\partial(\phi_t)=\psi_t^{t-1}-\sum\limits_{s=1}^{t-1}\psi_s^{t-1}\phi_{t-s}
$$
for all $t\ge 1$.
\end{Lemma}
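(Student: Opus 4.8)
The statement asserts the existence of maps $\phi_t:P\to P$ of degree $-t$ satisfying the recursion $\partial(\phi_t)=\psi_t^{t-1}-\sum_{s=1}^{t-1}\psi_s^{t-1}\phi_{t-s}$. The natural approach is induction on $t$, constructing the $\phi_t$ one at a time. At each stage, the right-hand side
\[
R_t:=\psi_t^{t-1}-\sum_{s=1}^{t-1}\psi_s^{t-1}\phi_{t-s}
\]
is an already-constructed map $P\to P$ of degree $1-t$, and we need to show $R_t$ is a coboundary in $\End_\msc{Q}(P)$, i.e.\ that $\partial(R_t)=0$ and then invoke acyclicity of the relevant Hom complex (using that $P$ is a bounded-above complex of projectives and $\mu_P$ is a quasi-isomorphism, as recalled in Section~\ref{sec:main}). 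Since a map $P\to P$ of degree $1-t<0$ is a coboundary as soon as it is a cocycle (and composing with $\mu_P$ detects this), the whole content is the verification that $\partial(R_t)=0$.

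\textbf{Base case.} For $t=1$ we have $R_1=\psi_1^{0}=\sum_{u=0}^1(\mu_P^{\ot_A u}\ot_A 1\ot_A\mu_P^{\ot_A(1-u)})\delta_2=(1\ot_A\mu_P+\mu_P\ot_A 1)\delta_2$... wait — more carefully $\psi_1^0=(1\ot_A\mu_P)\delta_2+(\mu_P\ot_A1)\delta_2$, which by Lemma~\ref{lem:940} is homotopic to $1-1=0$; but actually what we want is just that $\psi_1^0$ is a cocycle homotopic to something, so a bounding $\phi_1$ exists. Concretely $\partial(\psi_1^0)=0$ follows from Lemma~\ref{psider} with $t=1$ (the sum is empty), and $\psi_1^0\sim 0$ after applying $\mu_P$ because $(\mu_P\ot_A\mu_P)\delta_2=\mu_P$; hence $\phi_1$ exists.

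\textbf{Inductive step.} Assuming $\phi_1,\dots,\phi_{t-1}$ have been constructed, I compute $\partial(R_t)$. Using $\partial(\psi_t^{t-1})=-\sum_{s=1}^{t-1}(-1)^s\psi_s^{t-1}\psi_{t-s}^{t-1-s}$ from Lemma~\ref{psider}, and $\partial(\psi_s^{t-1}\phi_{t-s})=\partial(\psi_s^{t-1})\phi_{t-s}+(-1)^{|\psi_s^{t-1}|}\psi_s^{t-1}\partial(\phi_{t-s})$ together with the inductive formula for $\partial(\phi_{t-s})$, everything should reorganize into a double sum over compositions $\psi_a^{?}\psi_b^{?}\phi_c$ (and terms $\psi_a\psi_b$) in which the signs and superscripts telescope/cancel by the identity in Lemma~\ref{psider} again. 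The key bookkeeping is that the superscripts behave additively under composition in exactly the way Lemma~\ref{psider} is phrased ($\psi_s^r\psi_{t-s}^{r-s}$), and the degree of $\psi_s^{t-1}$ is $1-s$, so the Koszul signs $(-1)^{|\psi_s^{t-1}|}=(-1)^{1-s}=(-1)^{s+1}$ line up with the $(-1)^s$ appearing in Lemma~\ref{psider}. After this cancellation one is left with $\partial(R_t)=0$, and then acyclicity gives $\phi_t$ with $\partial(\phi_t)=R_t$, of degree $|R_t|-1=-t$ as required.

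\textbf{Main obstacle.} The only real difficulty is the sign- and index-bookkeeping in showing $\partial(R_t)=0$: one must expand $\partial$ of the defining right-hand side, substitute both Lemma~\ref{psider} and the inductive hypothesis, and check that the resulting terms cancel in pairs. I expect no conceptual obstruction — it is a telescoping argument of the same flavor as the proof of Lemma~\ref{psider} — but the re-indexing of the triple sums (splitting a composition $\psi\psi\phi$ according to where the $\partial$ landed) is where care is needed. One convenient device is to package the $\psi_t^r$ and $\phi_t$ into a single generating-function-style identity in $\End_\msc{Q}(P)$ graded by $t$, so that the recursion becomes a single equation $\partial\Phi=\Psi-\Psi\Phi$ (with $\Psi$ satisfying $\partial\Psi=-\Psi\Psi$ up to signs from Lemma~\ref{psider}), whence $\partial(\Psi-\Psi\Phi)=-\Psi\Psi-(\partial\Psi)\Phi\pm\Psi(\partial\Phi)=-\Psi\Psi+\Psi\Psi\Phi\pm\Psi(\Psi-\Psi\Phi)=0$ formally; one then only needs to confirm the signs work out componentwise.
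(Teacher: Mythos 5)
Your proposal follows essentially the same route as the paper's proof: induction on $t$, observing that the right-hand side $R_t$ has negative degree so it suffices to show $\partial(R_t)=0$, and then deriving this by combining Lemma~\ref{psider} with the inductive hypothesis so that the double sum telescopes into $\sum_{s=1}^{t-1}(-1)^{s+1}\psi_s^{t-1}\bigl(R_{t-s}-\partial(\phi_{t-s})\bigr)=0$. The sign bookkeeping you flag does work out exactly as you predict (the Koszul sign $(-1)^{|\psi_s^{t-1}|}=(-1)^{1-s}$ aligns with the $(-1)^s$ in Lemma~\ref{psider}), and your base case argument matches the paper's.
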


\begin{proof} For $t=0$ we simply need to show that $\psi_1^0$ is a coboundary.  
Since $\delta_2$ is a cocycle and $\psi_1^0=(1\ot_A\mu_P+\mu_P\ot_A1)\delta_2$, we have $\partial(\psi_1^0)=0$.  Since $\mu_P\psi_1^0=0$, $\psi_1^0$ is indeed a coboundary, and there is a map $\phi_1$ satisfying the required conditions. Let us now proceed by induction on $t$. Suppose that $t\ge 2$ and we have constructed $\phi_i$ for $1\le i\le t-1$ satisfying the required conditions. Let 
\[
   \Phi=\psi_t^{t-1}-\sum\limits_{s=1}^{t-1}\psi_s^{t-1}\phi_{t-s} .
\]
Since $\Phi$ has degree $1-t<0$, it is enough to verify that it is a chain map. Using Lemma~\ref{psider}, we get
\begin{multline*}
\partial(\Phi)=\sum\limits_{u=1}^{t-1}(-1)^{u+1}\psi_u^{t-1}\psi_{t-u}^{t-u-1}-\sum\limits_{s=1}^{t-1}\sum\limits_{u=1}^{s-1}(-1)^{u+1}\psi_u^{t-1}\psi_{s-u}^{t-u-1}\phi_{t-s}+\sum\limits_{s=1}^{t-1}(-1)^s\psi_s^{t-1}\partial(\phi_{t-s})\\
=\sum\limits_{s=1}^{t-1}(-1)^{s+1}\psi_s^{t-1}\big(\psi_{t-s}^{t-s-1}-\sum\limits_{r=1}^{t-s-1}\psi_r^{t-s-1}\phi_{t-s-r}-\partial(\phi_{t-s})\big)=0.
\end{multline*}
\end{proof}

\begin{Lemma}\label{main_tec} Let $i$ be any nonnegative integer. Suppose that $\alpha\in\DerP_n$ is such that $[\delta,\alpha]_j=0$ for $j\le 2i$. Then $\mu_P^{\ot_A(2i+1)}[\delta,\alpha]_{2i+1}\sim 0$.
\end{Lemma}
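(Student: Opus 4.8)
The proof will be a direct computation: I would exhibit an explicit bounding morphism. For $s\ge 0$ write $\bar\alpha_s:=\mu_P^{\ot_A s}\alpha_s\in\Hom_\mathscr{Q}(P,A)$, so $\bar\alpha_0=\alpha_0$, and note that the $j=0$ instance of the hypothesis says $\partial(\alpha_0)=[\delta,\alpha]_0=0$; this is the only place $j=0$ is used. First I would unwind $\mu_P^{\ot_A(2i+1)}[\delta,\alpha]_{2i+1}$ from Definition~\ref{def:coder_comp}, applying $\mu_P^{\ot_A(2i+1)}$ on the left and invoking weak counitality ($\mu_P^{\ot_A m}\delta_m=0$ for $m>2$, $(\mu_P\ot_A\mu_P)\delta_2=\mu_P$, and $\mu_P d_P=0$). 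The ``outer $\delta$'' half of the bracket then collapses: only the $\delta_2$-terms survive, producing a $\pm$-alternating sum of copies of $\bar\alpha_{2i}$, and because the index $2i+1$ is \emph{odd} this sum is empty. The ``inner $\delta$'' half, using that each $\bar\alpha_s$ is $A^e$-linear and hence slides (up to Koszul signs) past the $\mu_P$'s on neighboring tensor factors, rewrites as $\sum_{s=0}^{2i+1}\pm\,\bar\alpha_s\circ\psi_{2i+1-s}^{\,\rho_s}$ for suitable parities $\rho_s$, under the convention $\psi_0:=d_P$ (so the $s=2i+1$ term is $\bar\alpha_{2i+1}d_P=\pm\partial(\bar\alpha_{2i+1})$, already a coboundary).

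Next I would set $h:=\sum_{s=0}^{2i}\pm\,\bar\alpha_s\circ\phi_{2i+1-s}$, with $\phi_t$ the maps of Lemma~\ref{phi}, and show $\mu_P^{\ot_A(2i+1)}[\delta,\alpha]_{2i+1}=\partial(h)$. Expanding $\partial(h)$ by the Leibniz rule gives terms $\partial(\bar\alpha_s)\circ\phi_{2i+1-s}$ and $\pm\,\bar\alpha_s\circ\partial(\phi_{2i+1-s})$. For the first, $\partial(\bar\alpha_s)=\mu_P^{\ot_A s}\partial(\alpha_s)$, and since $[\delta,\alpha]_s=0$ for $s\le 2i$, the coderivation equation of Definition~\ref{def:coder_comp} expresses $\partial(\alpha_s)$ as a sum of $\delta_{\ge 2}$-terms composed with lower $\alpha$'s, which one substitutes. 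For the second, use $\partial(\phi_t)=\psi_t^{t-1}-\sum_{r=1}^{t-1}\psi_r^{t-1}\phi_{t-r}$. After expanding everything and regrouping --- with $\delta\circ\delta=0$ and Lemma~\ref{psider} used to control the remaining $\partial(\psi_t^r)$ --- the pieces that are scalar multiples of $\mu_P^{\ot_A j}[\delta,\alpha]_j$ for $j\le 2i$ all drop out by hypothesis, and the surviving remainder is precisely the expansion of $\mu_P^{\ot_A(2i+1)}[\delta,\alpha]_{2i+1}$ from the previous paragraph. The conceptual content is already isolated: the $\phi_t$ are built exactly so as to turn the non-closed maps $\psi_t^{t-1}$ into $\partial$-images compatibly with nesting, and Lemma~\ref{psider} guarantees the nesting is consistent.

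The main obstacle will be the Koszul-sign and parity bookkeeping: checking that the $\psi_t^r$ produced in the two expansions carry exactly the parity labels $r$ for which Lemma~\ref{phi} furnishes a $\partial$-preimage, and that the re-indexing matching the $\phi$-Leibniz expansion against the lower brackets $\mu_P^{\ot_A j}[\delta,\alpha]_j$ is exact. Here the slightly ad hoc value $\psi_1^{2l+1}=(1\ot_A\mu_P-\mu_P\ot_A1)\delta_2-1$ plays its role: by Lemma~\ref{lem:940} its ``$-1$'' absorbs a would-be leftover copy of $\bar\alpha$ coming from a $\delta_2$-contribution. One could instead organize the whole argument as an induction on $i$, the base case $i=0$ being the one-line computation $\mu_P[\delta,\alpha]_1=\pm\partial(\mu_P\alpha_1)-(-1)^n\mu_P\big((1\ot_A\alpha_0+\alpha_0\ot_A1)\delta_2\big)=\pm\partial(\mu_P\alpha_1)\pm\alpha_0\circ\psi_1^0=\pm\partial(\mu_P\alpha_1)\pm\partial(\alpha_0\circ\phi_1)$, using $\partial(\alpha_0)=0$ and $\partial(\phi_1)=\psi_1^0$, with the inductive step discarding lower pieces via the already-established vanishings $\mu_P^{\ot_A(2k+1)}[\delta,\alpha]_{2k+1}\sim 0$ for $k<i$; the effort looks comparable either way.
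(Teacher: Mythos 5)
Your proposal is correct and follows essentially the same route as the paper: the paper also reduces $\mu_P^{\ot_A(2i+1)}[\delta,\alpha]_{2i+1}$ via weak counitality to $\sum_j\pm\,\mu_P^{\ot_A j}\alpha_j\psi_{2i+1-j}^{2i-j}$ and then bounds it using the $\phi_t$ of Lemma~\ref{phi}, merely packaging your explicit primitive $h=\sum_s\pm\,\bar\alpha_s\phi_{2i+1-s}$ as a telescoping sum $A_0=\sum_t(A_t-A_{t+1})$ with each difference equal to $\pm\,\mu_P^{\ot_A(2i-t)}\partial(\alpha_{2i-t}\phi_{t+1})$. The remaining work in both versions is the same sign and parity bookkeeping you identify.
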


\begin{proof}
We apply the weakly counital property to find 
\[
\mu_P^{\ot_A(2i+1)}[\delta,\alpha]_{2i+1}=\mu_P^{\ot_A(2i+1)}\partial(\alpha_{2i+1})-(-1)^n\sum\limits_{j=0}^{2i}(-1)^{(2i+1-j)n}\mu_P^{\ot_Aj}\alpha_j\psi_{2i+1-j}^{2i-j}
\]
\begin{equation}\label{eq:1088}
\sim -\sum\limits_{j=0}^{2i}(-1)^{(2i-j)n}\mu_P^{\ot_Aj}\alpha_j\psi_{2i+1-j}^{2i-j}.
\end{equation}
Let us introduce $A_t=\sum\limits_{j=0}^{2i-t}(-1)^{(2i-j)n}\mu_P^{\ot_Aj}\alpha_j\left(\psi_{2i+1-j}^{2i-j}-\sum\limits_{s=2i-t-j+1}^{2i-j}\psi_s^{2i-j}\phi_{2i+1-j-s}\right)$, where the functions $\phi_u$ are as in Lemma~\ref{phi}.  Note that $-A_0$ is the final term in the above sequence~\eqref{eq:1088}, and $A_k=0$ for $k>2i$.  By considering the ranges of the sums involved in the definition of the $A_t$, one calculates the difference
\[
A_t-A_{t+1}=(-1)^{tn}\mu_P^{\ot_A(2i-t)}\alpha_{2i-t}\left(\psi_{t+1}^{t}-\sum\limits_{s=1}^{t}\psi_s^{t}\phi_{t-s+1}\right)+\sum\limits_{j=0}^{2i-t-1}(-1)^{(2i-j)n}\mu_P^{\ot_Aj}\alpha_j\psi_{2i-t-j}^{2i-j}\phi_{t+1}
\]
\[
=(-1)^{tn}\mu_P^{\ot_A(2i-t)}\alpha_{2i-t}\partial(\phi_{t+1})+\sum\limits_{j=0}^{2i-t-1}(-1)^{(2i-j)n}\mu_P^{\ot_Aj}\alpha_j\psi_{2i-t-j}^{2i-j}\phi_{t+1}
\]
\[
=(-1)^{(t+1)n}\mu_P^{\ot_A(2i-t)}\partial(\alpha_{2i-t}\phi_{t+1})\sim 0 
\]
for $0\le t\le 2i$. Thus, $\mu_P^{\ot_A(2i+1)}[\delta,\alpha]_{2i+1}\sim -A_0\sim -A_{2i+1}=0$.
\end{proof}

Now we are ready to prove existence of
$A_{\infty}$-coderivations corresponding to Hochschild cocycles. 

\begin{theorem}\label{lift_exist}
Let $P$ be any shifted projective bimodule resolution of $A$, with $A_\infty$-coalgebra structure as in Theorem~\ref{Acostruc}. 
For any chain map $f:P\rightarrow A$ of degree $n$, there exists an $A_{\infty}$-coderivation $\alpha_f$ of the same degree with $(\alpha_f)_{0}=f$.
\end{theorem}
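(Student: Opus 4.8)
The plan is to construct the components $(\alpha_f)_n$ inductively, exactly mirroring the inductive structure used in the proof of Theorem~\ref{Acostruc}, but now solving the $A_\infty$-coderivation equation~\eqref{eq:Ccomp} rather than the Maurer-Cartan equation. Set $(\alpha_f)_0 = f$; this is a cocycle in $\Hom_\msc{Q}(P,A)$ by hypothesis. Suppose that $(\alpha_f)_j$ has been constructed for all $j \le m-1$ in such a way that $[\delta,\alpha_f]_j = 0$ for $j \le m-1$ (note $[\delta,\alpha_f]_j$ only involves the components $(\alpha_f)_k$ with $k \le j$, so this makes sense). The equation we must solve at step $m$ is $[\delta,\alpha_f]_m = 0$, which by the componentwise formula for the bracket and the fact that $\delta_1 = d_P$ takes the form
\[
\partial\big((\alpha_f)_m\big) = -\sum_{\substack{r+s+t=m\\ s>1}} (1^{\ot_A r}\ot_A \delta_s \ot_A 1^{\ot_A t})(\alpha_f)_{m-s+1} + (-1)^n \sum_{\substack{r+s+t=m\\ s < m}} (1^{\ot_A r}\ot_A (\alpha_f)_s \ot_A 1^{\ot_A t})\delta_{m-s+1},
\]
i.e.\ $\partial\big((\alpha_f)_m\big) = R_m$ where $R_m$ is a fixed map $P \to P^{\ot_A m}$ built from lower components. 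Since $P^{\ot_A m}$ is a bounded-above complex of projectives and $(\mu_P^{\ot_A m})_\ast : \Hom_\msc{Q}(P,P^{\ot_A m}) \to \Hom_\msc{Q}(P,A)[-m]$ is a quasi-isomorphism (the fact emphasized in the ``remark on methods'' subsection), a solution $(\alpha_f)_m$ exists if and only if $R_m$ is a cocycle and $\mu_P^{\ot_A m} R_m$ is a coboundary.

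The cocycle condition $\partial(R_m) = 0$ is a purely formal consequence of $\delta\circ\delta = 0$, $[\delta,[\delta,-]] = \tfrac12[[\delta,\delta],-] = 0$, and the already-established vanishing $[\delta,\alpha_f]_j = 0$ for $j < m$: one computes $\partial(R_m)$ by applying $\partial$ termwise and uses the relations~\eqref{eq:comp} among the $\delta_s$ together with the inductive hypothesis, just as in the analogous computation in the proof of Theorem~\ref{Acostruc}. This is a routine if somewhat lengthy bookkeeping exercise. The genuinely substantive point is the second condition: one must show $\mu_P^{\ot_A m} R_m$ is a coboundary. Here I would split into the cases $m$ even and $m$ odd. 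When $m = 2i+1$ is odd, this is precisely what Lemma~\ref{main_tec} gives us, applied to $\alpha = \alpha_f$: by the inductive hypothesis $[\delta,\alpha_f]_j = 0$ for $j \le 2i$, hence $\mu_P^{\ot_A(2i+1)}[\delta,\alpha_f]_{2i+1} \sim 0$, and $[\delta,\alpha_f]_{2i+1}$ differs from $R_{2i+1}$ only by the coboundary term $\partial((\alpha_f)_{2i+1})$, which we are trying to solve for — so rephrasing, the obstruction class of $\mu_P^{\ot_A m} R_m$ vanishes and $(\alpha_f)_m$ can be chosen. The even case $m = 2i$ needs an additional argument, and I expect this to be the main obstacle.

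For the even case, the trick (again parallel to the even/odd dichotomy in Lemma~\ref{InnCoder}) is that one is not obliged to keep the lower components fixed: one has freedom to adjust $(\alpha_f)_{m-1}$ by any $\partial$-cocycle without disturbing the equations already solved, since replacing $(\alpha_f)_{m-1}$ by $(\alpha_f)_{m-1} - u$ with $\partial(u) = 0$ changes $[\delta,\alpha_f]_{m-1}$ only by $\partial$-terms that vanish and changes $R_m$ in a controlled way. Concretely, one chooses $u = -(1^{\ot_A(m-1)}\ot_A\mu_P)(\alpha_f)_{m-1}$-type corrections so that $\mu_P^{\ot_A m} R_m$ becomes, after this modification, visibly a coboundary; the key input is again Lemma~\ref{lem:940}, i.e.\ that $(1\ot_A\mu_P - \mu_P\ot_A 1)\delta_2 \sim 2$, which is what makes the even-degree contributions collapse. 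Once both parity cases are handled, the induction runs to completion and produces an $A_\infty$-coderivation $\alpha_f$ of degree $n$ with $(\alpha_f)_0 = f$; tracking the construction also shows $(\alpha_f)_1$ is a homotopy lifting of $f$ in the sense of~\cite{Volkov}, as claimed in the introduction, though that identification is a separate unwinding of definitions. I would organize the write-up so that the formal cocycle check is dispatched quickly by citing the corresponding step of Theorem~\ref{Acostruc}, and concentrate the exposition on the two parity cases and their reliance on Lemmas~\ref{lem:940}, \ref{phi}, and~\ref{main_tec}.
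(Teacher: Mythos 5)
Your proposal is correct and follows essentially the same route as the paper: induction on the component index, the formal cocycle check via $[\delta,[\delta,\alpha_f]]=0$, Lemma~\ref{main_tec} for odd $m$, and for even $m$ the adjustment of the $(m-1)$-component by a cocycle of the form $(1^{\ot_A(m-1)}\ot_A\mu_P)$ applied to the obstruction term (in the paper this is $u=(1^{\ot_A(m-1)}\ot_A\mu_P)\Phi_{\alpha_0,\dots,\alpha_{m-1}}$, not to $(\alpha_f)_{m-1}$ itself), collapsed via $(1\ot_A\mu_P-\mu_P\ot_A 1)\delta_2\sim 2$. The only cosmetic difference is that the paper phrases the obstruction as $\Phi_{\alpha_0,\dots,\alpha_{m-1}}=[\delta,\alpha_f]_m-\partial((\alpha_f)_m)$ rather than your $R_m$.
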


\begin{proof}

Let us set $(\alpha_f)_{0} = f$, and note that $(\alpha_f \circ \delta)_{0} = f \circ \delta_1 = f d_{P[-1]} =0$
since $f$ is a cocycle. On the other hand, $(\delta\circ \alpha_f)_{0} = \delta_0\circ (\alpha_f)_1 =0$ for any choice of $(\alpha_f)_1$ since $\delta_{0}=0$.

Now assume that $i>0$ and $(\alpha_f)_k$ has been defined for $0\le k\le i-1$ in such a way that $[\delta,\alpha_f]_k=0$ for $0\le k\le i-1$.
We are going to prove that it is possible to define $(\tilde\alpha_f)_k$ for $0\le k\le i$ in such a way that $(\tilde\alpha_f)_k=(\alpha_f)_k$ for $k<i-1$ and $[\delta,\tilde\alpha_f]_k=0$ for $0\le k\le i$ with any choice of $(\tilde\alpha_f)_k$ for $k>i$.
Since $[\delta,[\delta,\alpha]]=0$ for any $\alpha\in\DerP$, we have
$$
0=[\delta,[\delta,\alpha_f]]_i=(\delta\circ[\delta,\alpha_f])_i-(-1)^n([\delta,\alpha_f]\circ\delta)_i=\partial([\delta,\alpha_f]_i).
$$
Note now that $[\delta,\alpha_f]_i=\partial\big((\alpha_f)_i\big)+\Phi_{\alpha_0,\dots,\alpha_{i-1}}$, where $\Phi_{\alpha_0,\dots,\alpha_{i-1}}:P\rightarrow P^{\ot_Ai}$ is a map depending on $(\alpha_f)_k$ with $0\le k\le i-1$. Hence, we have $\partial(\Phi_{\alpha_0,\dots,\alpha_{i-1}})=0$ and $(\alpha_f)_i$ such that $[\delta,\alpha_f]_i=0$ exists if and only if $\mu_P^{\ot_Ai}\Phi_{\alpha_0,\dots,\alpha_{i-1}}\sim 0$.

In the case $2\nmid i$ we have $\mu_P^{\ot_Ai}[\delta,\alpha_f]_i\sim 0$ for any choice of $(\alpha_f)_k$ ($k\ge i$) by Lemma~\ref{main_tec}, that is $\mu_P^{\ot_Ai}\Phi_{\alpha_0,\dots,\alpha_{i-1}}\sim \mu_P^{\ot_Ai}[\delta,\alpha_f]_i\sim 0$.

Suppose now that $2\mid i$. Let us introduce $u=\left(1^{\ot_A(i-1)}\ot_A\mu_P\right)\Phi_{\alpha_0,\dots,\alpha_{i-1}}:P\rightarrow P^{\ot_A(i-1)}$. By construction, we have $\mu_P^{\ot_A(i-1)}u=\mu_P^{\ot_Ai}\Phi_{\alpha_0,\dots,\alpha_{i-1}}$ and $\partial(u)=0$.
Let us set $(\tilde\alpha_f)_k=(\alpha_f)_k$ for $0\le k\le i-2$ and $(\tilde\alpha_f)_{i-1}=(\alpha_f)_{i-1}+u$. We still have $[\delta,\tilde\alpha_f]_k=0$ for $0\le k\le i-1$, since $\partial(\Phi_{\tilde\alpha_0,\dots,\tilde\alpha_{i-1}})=0$. On the other hand, direct calculations show that
\[
\begin{aligned}
&\mu_P^{\ot_Ai}\Phi_{\tilde\alpha_0,\dots,\tilde\alpha_{i-1}}\\
&= \ \mu_P^{\ot_Ai}\Phi_{\alpha_0,\dots,\alpha_{i-1}}+\mu_P^{\ot_Ai}\sum\limits_{j=0}^{i-2}(1^{\ot_Aj}\ot_A\delta_2\ot_A1^{\ot_A(i-2-j)})u
-(-1)^n\mu_P^{\ot_Ai}(u\ot_A1+1\ot_Au)\delta_2\\
&= \ \mu_P^{\ot_A{i-1}}u+\sum\limits_{j=0}^{i-2}(-1)^{i-j}\mu_P^{\ot_A(i-1)}u
-\mu_P^{\ot_A(i-1)}u(1\ot_A\mu_P+(-1)^{i-1}\mu_P\ot_A1)\delta_2\\
&= \ 2\mu_P^{\ot_A(i-1)}u-\mu_P^{\ot_A(i-1)}u(1\ot_A\mu_P-\mu_P\ot_A1)
\delta_2\\
&\sim 2\mu_P^{\ot_A(i-1)}u-2\mu_P^{\ot_A(i-1)}u \ = \ 0.
\end{aligned}
\]
The argument above shows that there is $(\tilde\alpha_f)_i$ such that $[\delta,\tilde\alpha_f]_k=0$ for $0\le k\le i$.
\end{proof}

For some resolutions, the picture is simpler than this general case.
For example, for the bar resolution (Example~\ref{ex:bar}), 
$A_{\infty}$-coderivations $\alpha$ are in fact essentially just
coderivations, that is one may always take $\alpha_1$ to be a
coderivation and $\alpha_i =0$ for $i\geq 2$.
For a resolution $P$ having a coassociative diagonal map
(such as the Koszul resolutions of Section~\ref{sec:Koszul} below),
we may take $\delta_i =0$ for $i\geq 3$, leading to simplified
conditions defining the $\alpha_i$: 
For all $N$,
\begin{equation}\label{eqn:coassoc-delta}
   \partial(\alpha_N) = - \sum_{i+j=N-2} (1^{\ot_A i}\ot_A\delta_2
   \ot_A 1^{\ot_A j})\alpha_{N-1}
   + (-1)^{|\alpha|} (1\ot_A \alpha_{N-1} + \alpha_{N-1}\ot_A 1)\delta_2 .
\end{equation}
Written another way,
\[
\begin{aligned}
 &  \sum_{i+j=N-1} (1^{\ot_A i}\ot_A \delta_1\ot_A 1^{\ot_Aj})\alpha_N
  + \sum_{i+j=N-2} (1^{\ot_Ai}\ot _A \delta_2\ot_A 1^{\ot_A j})\alpha_{N-1}\\
 & \quad \quad = (-1)^{|\alpha|} \alpha_N \delta_1
   + (-1)^{|\alpha|} (1\ot_A \alpha_{N-1} + \alpha_{N-1}\ot_A 1)\delta_2 .
\end{aligned}
\]

\subsection{Homotopy liftings and proof of Theorem~\ref{mainthm}}\label{subsec:proof}

Let us now recall the definition of a homotopy lifting from~\cite{Volkov},
adjusted here to fit our grading and sign choices. 
To prove our main Theorem~\ref{mainthm}, we will need a connection between
$A_{\infty}$-coderivations and homotopy liftings, given by Lemma~\ref{1comp_homlif} below.
First we define homotopy liftings.

\begin{Def}\label{def:hom_lif}
Let $P$ be a shifted projective bimodule resolution of $A$ with $A_{\infty}$-coalgebra
structure $\delta$, and let 
$f:P\rightarrow A$ be a chain map. A map $\phi_f:P\rightarrow P$ is called a {\it homotopy lifting} of the pair $(f,\delta_2)$ if $\partial(\phi_f)=(f\otimes_A 1+1\otimes_A f)\delta_2$ and $\mu_P\phi_f+f\phi\sim 0$ for some $\phi:P\rightarrow P$ of degree $-1$ such that 
\begin{equation}\label{eq:hom1}
\partial(\phi)=(\mu_P\otimes_A 1+1\otimes_A \mu_P)\delta_2 . 
\end{equation}
\end{Def}

\begin{rema}
Note that a map $\phi:P\rightarrow P$ of degree $-1$ satisfying \eqref{eq:hom1} exists~\cite{Volkov}. Moreover, any two such maps are homotopic. Thus, the condition $\mu_P\phi_f+f\phi\sim 0$ holds for some $\phi$ satisfying \eqref{eq:hom1} if and only if it holds for any $\phi$ satisfying \eqref{eq:hom1}.
\end{rema}

\begin{Lemma}\label{1comp_homlif}
Let $P$ be a shifted projective bimodule resolution of $A$ with weakly counital $A_{\infty}$-coalgebra
structure $\delta$.  Let $\alpha$ be an $A_{\infty}$-coderivation on $P$ of degree $n$. Then $(-1)^n\alpha_1$ is a homotopy lifting for $(\alpha_0,\delta_2)$.
\end{Lemma}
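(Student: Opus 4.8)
The plan is to unwind the defining equation~\eqref{eq:Ccomp} for an $A_\infty$-coderivation in low arities $N$ and match it against the two conditions in Definition~\ref{def:hom_lif}. First I would write out \eqref{eq:Ccomp} for $N=1$. Using $\delta_1=d_P$ and that $\delta_0=0$, the only surviving terms are $\delta_1\alpha_1$ on the left and $\alpha_1\delta_1$ together with $\delta_2 f_0$ on the right (here $f=\alpha$, $\alpha_0 = f_0\colon P\to A$), which after sorting signs via the Koszul convention yields $\partial(\alpha_1) = (-1)^n(\alpha_0\ot_A 1 + 1\ot_A\alpha_0)\delta_2$. Hence $\partial\big((-1)^n\alpha_1\big) = (\alpha_0\ot_A 1 + 1\ot_A\alpha_0)\delta_2$, which is exactly the first condition required of a homotopy lifting of $(\alpha_0,\delta_2)$.

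Next I would produce the auxiliary map $\phi$ of degree $-1$ with $\partial(\phi) = (\mu_P\ot_A 1 + 1\ot_A\mu_P)\delta_2$. The natural candidate is $\phi := -\phi_1$, where $\phi_1$ is the map furnished by Lemma~\ref{phi}: there, $\partial(\phi_1) = \psi_1^0 = (1\ot_A\mu_P + \mu_P\ot_A 1)\delta_2$, so up to sign this is precisely \eqref{eq:hom1}. (Alternatively one just invokes the remark after Definition~\ref{def:hom_lif}, which asserts such a $\phi$ exists and is unique up to homotopy, so any fixed choice will do.) The remaining task is the second condition: $\mu_P(-1)^n\alpha_1 + \alpha_0\phi \sim 0$. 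To get this I would look at the image under $\mu_P$ of the $N=1$ relation, or more directly use Lemma~\ref{main_tec} with $i=0$: since $[\delta,\alpha]_0 = 0$ and $[\delta,\alpha]_1 = 0$ (as $\alpha$ is an $A_\infty$-coderivation, so $[\delta,\alpha]=0$ identically), the lemma's proof machinery gives $\mu_P^{\ot_A 1}[\delta,\alpha]_1 \sim 0$, and tracking the construction of $A_0$ in that proof one reads off the homotopy between $\mu_P\alpha_1(-1)^n$ and $\mp\alpha_0\phi_1$, i.e. the combination $\mu_P(-1)^n\alpha_1 + \alpha_0\phi$ is null-homotopic after the correct sign bookkeeping in the $\psi$'s and $\phi$'s.

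Concretely, I expect the argument to run: write $[\delta,\alpha]_1 = \partial(\alpha_1) + (\text{terms involving }\alpha_0,\delta_2) = 0$, apply $\mu_P$, use $(\mu_P\ot_A\mu_P)\delta_2 = \mu_P$ (weak counit) to collapse $\mu_P\delta_2$-type terms, and then recognize the residual $\alpha_0\psi_1^0 = \alpha_0\partial(\phi_1) = \partial(\alpha_0\phi_1)$ since $\alpha_0$ is a cocycle; this exhibits $\mu_P(-1)^n\alpha_1 + \alpha_0\phi$ as a coboundary. The main obstacle is purely the sign accounting — making sure the $(-1)^n$ in the statement, the Koszul signs in $\alpha_0\ot_A 1 + 1\ot_A\alpha_0$ versus $(1\ot_A f + f\ot_A 1)$ in Definition~\ref{def:hom_lif}, and the sign in the chosen $\phi$ all line up so that both conditions hold on the nose rather than up to a stray sign. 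There is no conceptual difficulty: everything is forced by \eqref{eq:Ccomp} in arity $\le 1$, the weak counit identity, and Lemma~\ref{phi}; the only care needed is bookkeeping, plus noting that changing $\phi$ within its homotopy class does not affect the condition $\mu_P\phi_f + f\phi \sim 0$, as recorded in the remark after Definition~\ref{def:hom_lif}.
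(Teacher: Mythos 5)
Your treatment of the first condition is correct and matches the paper: the $N=1$ instance of~\eqref{eq:Ccomp}, with $\delta_0=0$ and $\delta_1=d_P$, gives exactly $\partial(\alpha_1)=(-1)^n(\alpha_0\ot_A 1+1\ot_A\alpha_0)\delta_2$, hence $\partial\big((-1)^n\alpha_1\big)=(\alpha_0\ot_A1+1\ot_A\alpha_0)\delta_2$.

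The second condition is where your argument has a genuine gap. The relation $\mu_P\big((-1)^n\alpha_1\big)+\alpha_0\phi\sim 0$ is a statement about $\mu_P\alpha_1$ itself up to homotopy, whereas the $N=1$ equation only constrains $\partial(\alpha_1)$. These are not the same: a map $\alpha_1$ satisfying the $N=1$ relation can be perturbed by an arbitrary cocycle in $\End_\msc{Q}(P)$ (e.g.\ by the identity map, for which $\mu_P\cdot 1_P=\mu_P\not\sim 0$), so no amount of sign bookkeeping will extract the second condition from arity $\le 1$. Likewise, Lemma~\ref{main_tec} with $i=0$ concerns $\mu_P[\delta,\alpha]_1$, which is identically zero here and carries no new information. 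What actually pins down $\mu_P\alpha_1$ up to homotopy is the $N=2$ component of the coderivation equation: the paper applies $\mu_P^{\ot_A 2}$ to $[\delta,\alpha]_2=0$, uses the weak counit identity to collapse $(\mu_P\ot_A\mu_P)\delta_2\alpha_1$ to $\mu_P\alpha_1$, and obtains equation~\eqref{eq:1209}, in which the term $(-1)^n\alpha_0\psi_2^2$ appears --- note that $\psi_2^2$ involves $\delta_3$, so the higher $A_\infty$-structure enters essentially. The witness $\phi$ is then taken to be $\psi_2^2-(1\ot_A\mu_P+\mu_P\ot_A1)\delta_2u$ (with $\partial(u)=(1\ot_A\mu_P-\mu_P\ot_A1)\delta_2-2$ from Lemma~\ref{lem:940}), not $\pm\phi_1$. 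You are right that, by the remark after Definition~\ref{def:hom_lif}, once the condition holds for one $\phi$ solving~\eqref{eq:hom1} it holds for all; but your proposal never establishes it for any $\phi$, because the ingredient it relies on ($N=1$ plus the weak counit identity) is insufficient. You need to bring in $[\delta,\alpha]_2=0$, and with it $\alpha_2$ and $\delta_3$.
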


\begin{proof} The equality $\partial(\alpha_1) = (-1)^n    (\alpha_0\ot_A 1 + 1\ot _A \alpha_0) \delta_2$ is equivalent to $[\delta,\alpha]_1=0$.  So we need only find $\phi$ solving~\eqref{eq:hom1} and $\mu_P\alpha_1+\alpha_1\phi\sim 0$.

Note now that
\begin{equation}\label{eq:1209}
\begin{aligned}
0=\mu_P^{\ot_A2}[\delta,\alpha]_2&\sim (\mu_P\ot_A\mu_P)\delta_2\alpha_1-(-1)^n\alpha_0\psi_2^2-\mu_P\alpha_1(1\ot_A\mu_P-\mu_P\ot_A 1)\delta_2\\
&=\mu_P\alpha_1-(-1)^n\alpha_0\psi_2^2-\mu_P\alpha_1(1\ot_A\mu_P-\mu_P\ot_A 1)\delta_2 .
\end{aligned}
\end{equation}
There exists $u:P\rightarrow P$ of degree $-1$ such that $\partial(u)=(1\ot_A\mu_P-\mu_P\ot_A 1)\delta_2-2$, by Lemma~\ref{lem:940}. We will show that $\phi=\psi_2^2-(1\ot_A \mu_P + \mu_P\ot _A 1)\delta_2u$ provides the desired function.
\par

We have
\[
\begin{aligned}
\mu_P\alpha_1(1\ot_A\mu_P-\mu_P\ot_A 1)\delta_2&=2\mu_P\alpha_1+\mu_P\alpha_1\partial(u)\\
&\sim 2\mu_P\alpha_1-(-1)^n\alpha_0(1\ot_A \mu_P + \mu_P\ot _A 1)\delta_2u.
\end{aligned}
\]
Thus, one multiplies~\eqref{eq:1209} by $(-1)^{n+1}$ and applies the above equivalence to find
$$
0\sim(-1)^n\mu_P\alpha_1+\alpha_0\big(\psi_2^2-(1\ot_A \mu_P + \mu_P\ot _A 1)\delta_2u\big)=(-1)^n\mu_P\alpha_1+\alpha_0\phi.
$$
Finally, the equality
$$
\partial(\phi)=\partial\big(\psi_2^2-(1\ot_A \mu_P + \mu_P\ot _A 1)\delta_2u\big)=(\mu_P\otimes_A 1+1\otimes_A\mu_P)\delta_2
$$
follows from the expression
\[
\partial(\psi_2^2)=\psi_1^2\psi_1^1=-(\mu_P\ot_A 1+1\ot_A\mu_P)\delta_2+(\mu_P\ot_A 1+1\ot_A\mu_P)\delta_2(1\ot_A\mu_P-\mu_P\ot_A 1)\delta_2
\]
of Lemma~\ref{psider} and the calculation
\[
\begin{aligned}
&-\partial\big((\mu_P\ot _A 1+1\ot_A\mu_P)\delta_2u\big)=-(\mu_P\ot _A 1+1\ot_A\mu_P)\delta_2\partial(u)\\
&\hspace{2cm}=2(\mu_P\ot _A 1+1\ot_A\mu_P)\delta_2-(\mu_P\ot _A 1+1\ot_A\mu_P)\delta_2(1\ot_A\mu_P-\mu_P\ot_A 1)\delta_2.
\end{aligned}
\]
\end{proof}

Now we will prove our main Theorem~\ref{mainthm}.

\begin{proof}[Proof of Theorem~\ref{mainthm}] 
By Theorem~\ref{lift_exist}, for any Hochschild cocycle $f$ on $P$, there exists an $A_{\infty}$-coderivation $\alpha_{f}$ such that $(\alpha_{f})_0=f$. By Lemma \ref{InnCoder}, this construction gives a well-defined bijection $F$ from $\HH^*(A)$ to $\big(\Coder^{\infty}_\mathscr{Q}(P)/\Inn^{\infty}_\mathscr{Q}(P)\big)[1]$.
Since 
$$(\alpha_f\smile\alpha_g)_0=(f\ot_Ag)\delta_2 \ \ \mbox{ and }
 \ \ [\alpha_f,\alpha_g]_0=f(\alpha_g)_1-(-1)^{(|f|-1)(|g|-1)}g(\alpha_f)_1,$$
$F$ is an isomorphism of Gerstenhaber algebras:
The former expression induces precisely the cup product on $\HH^*(A)$.
The latter expression induces the Gerstenhaber bracket on $\HH^*(A)$ by
\cite[Theorem 4]{Volkov} and Lemma \ref{1comp_homlif}.
\end{proof}

\subsection{Coderivations on the tensor coalgebra}\label{subsec:Stasheff}
Next we make a direct comparison of $A_{\infty}$-coderivations
on the bar resolution $B(A)$ of $A$ with 
coderivations on the tensor coalgebra of $A$.
This comparison will clarify how we 
view $A_{\infty}$-coderivations on an arbitrary bimodule
resolution $P$ as a generalization of such coderivations when they are cocycles,
and thus our results as generalizing those of Stasheff~\cite{Stasheff}.

We take $P=B(A)$ here, with $A_{\infty}$-coalgebra structure as in Example~\ref{ex:bar}.
Let $\alpha$ be an $A_{\infty}$-coderivation on $B(A)$
for which $\alpha_n=0$ whenever $n\geq 2$ so that $\alpha = \alpha_0+\alpha_1$.
By Lemma~\ref{1comp_homlif}, up to a sign, $\alpha_1$ is a homotopy lifting
for $(\alpha_0, \delta_2)$.
View $T(A)$ as embedded in $B(A)$ via $A^{\ot n}\cong k\ot A^{\ot n}\ot k\hookrightarrow
A^{\ot (n+2)}$ for each $n$.
Then $\alpha_1|_{T(A)}$ is a coderivation that is in the kernel of the differential on $T(A)$.
Conversely, given a coderivation on $T(A)$ that is a cocycle, 
composing with projection onto $A$
yields an element of $\Hom_k(T(A),A)$ which can be extended to an $A$-bimodule
homomorphism $\alpha_0$ on $B(A)$.
We may extend the original coderivation and denote the corresponding function
to $A$ by $\alpha_1$, and the sum $\alpha_0\pm \alpha_1$ is thus 
an $A_{\infty}$-coderivation on $B(A)$. 
Under this correspondence, brackets are preserved by their
definitions as graded commutators, and so our Theorem~\ref{mainthm}
generalizes Stasheff's description of the Gerstenhaber bracket
as a graded commutator of coderivations on the tensor coalgebra~\cite{Stasheff}.

\subsection{Examples}

We now give some examples.
In Example~\ref{ex:x2} we consider 
a Koszul algebra and its coassociative Koszul resolution,
while in Examples~\ref{ex:x3} and~\ref{ex:x3char3} we consider 
resolutions with nonzero higher $A_{\infty}$-coalgebra maps.

\begin{example}\label{ex:x2} 
Let $\kk$ be an arbitrary field and let A =$\kk [x]/(x^2)$.
Let $P$ be the following resolution of $A$
(cf.\ Example~\ref{ex:xn}):
\[
  P: \hspace{2cm} \cdots \stackrel{ \cdot v}{\longrightarrow} A\ot A 
 \stackrel{ \cdot u}{\longrightarrow} A\ot A 
 \stackrel{ \cdot v}{\longrightarrow} A\ot A 
 \stackrel{ \cdot u}{\longrightarrow} A\ot A , 
\]
where $u = x\ot 1 -1\ot x$ and $v=x\ot 1 + 1\ot x$. 
For each $i$, let $e_i$ denote the element $1\ot 1$ in $A\ot A$.
An embedding of $P$ into the bar resolution on $A$
(defined in Example~\ref{ex:bar}) is given by
\[
  \iota ( e_i) = 1\ot x\ot x\ot\cdots\ot x\ot 1
   \ \ \ \mbox{ in } \ A^{\ot (i+2)} 
\]
for each $i$.
An induced map $\delta_2:P\rightarrow P\ot_A P$ is thus given by
\[
   \delta_2(e_i) = \sum_{j+l=i}(-1)^j e_j\ot e_l 
\]
for all $i$. 
We may take $\delta_n =0$ for $n\geq 3$.
One may check directly that $\delta$ gives $P$ the structure
of an $A_{\infty}$-coalgebra.
See also Section~\ref{sec:Koszul} for a general construction for Koszul algebras.

Now consider the Hochschild 1-cocycle on $A$ 
given by $\alpha_0 (e_1) = x$
(and $\alpha_0(e_i) =0$ for $i\neq 1$).
Let 
\[
   \alpha_1(e_i) = - i e_i
\]
and $\alpha_m(e_i)=0$ for all $i$ and all $m\geq 2$.
Straightforward calculations show that $(\alpha_i)$ is an 
$A_{\infty}$-coderivation.
Consider the Hochschild 2-cocycle given by $\beta_0(e_2)=x$
(and $\beta_0(e_i)=0$ for $i\neq 2$).
Let 
\[
   \beta_1(e_{2i})=-e_{2i-1} \ \ \ \mbox{ and } \ \ \
  \beta_1(e_{2i+1})=0
\]
for all $i$. Let $\beta_m(e_i)=0$ for all $m\geq 2$ and all $i$.
Then $(\beta_i)$ is an $A_{\infty}$-coderivation.
\end{example}

\begin{example}\label{ex:x3}
Let $\kk$ be an arbitrary field and let $A=\kk[x]/(x^3)$.
We take resolution $P$ with $A_{\infty}$-coalgebra structure
$\delta$ as given in Example~\ref{ex:xn}.
Consider the Hochschild 1-cocycle given by $\alpha_0(e_1)=x$
(and $\alpha_0(e_i)=0$ for $i\neq 1$).
Let
\begin{eqnarray*}
  \alpha_1(e_{2i}) & = & -3i e_{2i} , \\
  \alpha_1(e_{2i+1}) & = & (-3i-1) e_{2i+1}
\end{eqnarray*}
for all $i$ and $\alpha_m(e_i) =0$ for all $i$ and all $m\geq 2$. 
Straightforward calculations show that $(\alpha_i)$ is an 
$A_{\infty}$-coderivation.
Consider the Hochschild 2-cocycle given by $\beta_0(e_2)=x$ 
(and $\beta_0(e_i)=0$ for $i\neq 2$).
Let 
\[
   \beta_1(e_{2i})= - e_{2i-1} \ \ \ \mbox{ and }
 \ \ \ \beta_1(e_{2i+1}) = 0
\]
for all $i$.
Let $\beta_m(e_i)=0$ for all $m\geq 2$ and all $i$.
Then $(\beta_i)$ is an $A_{\infty}$-coderivation.
\end{example}

Next is our first example of an $A_{\infty}$-coderivation
$\alpha$ with $\alpha_2$ nonzero. 
See also Example~\ref{ex:poly-ring}. 

\begin{example}\label{ex:x3char3}
Let $\kk$ be a field of characteristic~3, and $A= \kk [x]/(x^3)$.
We again take resolution $P$ and $A_{\infty}$-coalgebra structure
as in Example~\ref{ex:xn}.
Consider the Hochschild 1-cocycle given by
$\alpha_0(e_1) = x^2$ (and $\alpha_0(e_i)=0$ for $i\neq 1$). 
Let
\begin{eqnarray*}
   \alpha_1(e_{2i}) & = & 0, \\
    \alpha_1(e_{2i+1}) & = & - e_{2i+1}x - xe_{2i+1}, \\
    \alpha_2(e_{2i}) & = & 0 , \\
    \alpha_2(e_{2i+1}) & = & \sum_{j+k = i} e_{2j+1}\ot_A e_{2k+1}
\end{eqnarray*}
for all $i$,
and $\alpha_m(e_i)=0$ for all $m>2$ and all $i$. 
Straightforward calculations and induction on $m$ show that 
$(\alpha_i)$ is an $A_{\infty}$-coderivation.
\end{example}

We end this section with some examples in which Gerstenhaber brackets are
found using these techniques. 

\begin{example} 
Let $\kk$ be an arbitrary field.
First let $A=\kk[x]/(x^2)$, as in Example~\ref{ex:x2}.
By Lemma~\ref{1comp_homlif} and the Gerstenhaber bracket formula
of~\cite{Volkov} that is recalled in the proof of Theorem~\ref{mainthm} above,
the cocycles $\alpha_0, \beta_0$ of Example~\ref{ex:x2} have Gerstenhaber
bracket
\[
    [ \alpha,\beta ]_0 = -\alpha_0\beta_1 + \beta_0\alpha_1 = \beta_0 ,
\]
as may be determined by evaluating on $e_2$.

Next let $A = \kk[x]/(x^3)$, as in Example~\ref{ex:x3}.
The cocycles $\alpha_0,\beta_0$ of Example~\ref{ex:x3} similarly
have Gerstenhaber bracket $[\alpha,\beta]_0 = \beta_0$.
\end{example}

\section{Connected graded algebras}\label{sec:conn-gr}

In this section, we will show that many connected graded algebras
have the property that the $A_{\infty}$-coderivations on $P$ corresponding
to Hochschild cocycles are locally finite, that is, they
factor through the direct sum $\oplus_{n\geq 0} P^{\ot_A n}$.  
These are the connected graded algebras satisfying a certain finiteness condition, (HF) below, which is always satisfied by Koszul algebras and Noetherian connected graded algebras.  We will also give some explicit formulas and further results on
$A_{\infty}$-coderivations for Koszul algebras. 

\subsection{Cohomology and connected graded algebras}
\label{sect:Hgr}

Let $A$ be a connected graded $\kk$-algebra, so that $A=\oplus_{n\geq 0}A_n$ with $A_0\cong \kk$.  We suppose additionally that $A$ satisfies the following homological finiteness condition:
\begin{enumerate}
\item[(HF)] $\dim \Ext_A^i(\kk,\kk)<\infty$ for each $i$.
\end{enumerate}
We note that any connected graded algebra which satisfies (HF) must be finitely generated, as $\Ext^1_A(\kk,\kk)$ is identified with the dual of a minimal space of generators (see e.g.~\cite[Section~5]{LPWZ}).  We also note that if $A$ is Noetherian, or Koszul, then $A$ satisfies the condition (HF).

Let $B$ be a connected graded algebra.  We are thinking here of $B=A$ or $B=A^e$.  Then any finitely generated graded $B$-module $M$ admits a minimal (graded) projective resolution $P(M)\to M$, with $P(M)_{-i}=B\ot V_{-i}$ for some graded vector space $V_{-i}$.  The minimality assumption gives $\Ext_B^i(M,\kk)=(V_{-i})^\ast$ (cf.~\cite[Section~2]{atv}).  We refer to the gradings on $P(M)$ induced by the grading on $B$ as the {\it internal grading}.  When $P(M)$ is finitely generated in each degree, we also refer to the induced grading on each $\Ext_B^i(M,\kk)$ as the internal grading.
\par

Recall that for any augmented algebra $A$ we have $\Ext^\ast_{A^e}(A,\kk)\cong\Ext_A^\ast(\kk,\kk)$~\cite[Lemma~9.1.9]{weibel}.  In particular, for our connected graded algebra $A$ the condition (HF) implies $\dim \Ext^i_{A^e}(A,\kk)<\infty$ for each $i$, and hence the minimal bimodule resolution $P(A)\to A$ of $A$ is such that each $P(A)_{-i}$ is finitely generated.
\par

The important point here is that any $A$ satisfying (HF) admits a graded bimodule resolution $P'$ with the following property:
\begin{enumerate}
\item[(HF$'$)] Each $P'_{-i}$ is finitely generated, in internal degrees $i\le d\le l(i)$, for some positive integer $l(i)$ which depends on $i$.
\end{enumerate}
(The lower bound $i$ is obtained by induction and the fact that $A$ is generated in positive degree.)  Indeed, the existence of a graded bimodule resolution $P'$ satisfying (HF$'$) is equivalent to condition (HF).  We note that such a $P'$ is free in each degree, by a Nakayama type argument.  We say a shifted bimodule resolution $P$ satisfies (HF$'$) if the associated non-shifted bimodule resolution $P'$ satisfies (HF$'$).

\subsection{Local finiteness of $A_{\infty}$-coderivations}

Let $P$ be a graded, shifted, bimodule resolution of $A$.  Then $T(P)$ has an induced internal grading, and we may speak of homogenous endomorphisms of $T(P)$ with respect to the internal grading (of some particular degree).  For the completed algebra $\widehat{T}(P)$, we say a continuous endomorphism of $\widehat{T}(P)$ is homogenous with respect to the internal grading if each of the truncations $\widehat{T}(P)/I^{n'}\to \widehat{T}(P)/I^n$ is homogenous, where $I$ denotes the ideal generated by $P$.
\par

We fix now $A$ a connected graded algebra satisfying the finiteness condition (HF), and $P$ a shifted bimodule resolution satisfying (HF$'$).  Recall that elements in $\mrm{Der}(\Omega)$, for a topological algebra $\Omega$, are by definition continuous.

\begin{lemma}\label{lem:lf}
Any degree $n$ derivation $f\in \mrm{Der}_\msc{Q}(\widehat{T}(P))$ which is homogenous with respect to the internal grading is locally finite.  Rather, any such $f$ is in the image of the completion map $\mrm{Der}_\msc{Q}(T(P))\to \mrm{Der}_\msc{Q}(\widehat{T}(P))$.
\end{lemma}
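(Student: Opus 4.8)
The plan is to exploit the finiteness condition (HF$'$) together with the fact that a continuous derivation of $\widehat T(P)$ is completely determined by its restriction to the topological generators $P$, i.e.\ by the tuple $(f_n)_{n\ge 0}$ with $f_n=p_n f|_P : P\to P^{\ot_A n}$. So it suffices to show that for each fixed $c\in P$, only finitely many of the $f_n(c)$ are nonzero; by $A^e$-linearity and the fact (noted in \S\ref{sect:Hgr}) that each $P_{-i}$ is finitely generated over $A^e$, it is enough to check this on a finite generating set in each homological degree, hence on a single homogeneous generator $c$ of internal degree $d$ and homological degree $-i$ (so $c$ lives in degree $1-i$ of the shifted complex).

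First I would record the two gradings in play on $P^{\ot_A n}$: the homological grading and the internal grading. A homogeneous generator $c\in P$ of homological degree $1-i$ and internal degree $d$ satisfies $i\le d\le l(i)$ by (HF$'$); more to the point, every homogeneous element of $P$ of internal degree $d$ has homological degree in the range $1-d\le \text{(hom.\ deg.)}\le 1$, since (HF$'$) forces the homological degree $-j$ part of $P$ to sit in internal degrees $\ge j$. Next I would observe that $f_n$ has internal degree $0$ (it is homogeneous of internal degree zero because $f$ is, being built from the internally-homogeneous structure) and some fixed homological degree $n_0:=|f|$ coming from the degree of $f$ as a derivation. Therefore $f_n(c)$ is a sum of tensors $c_1\ot_A\cdots\ot_A c_n$ with $\sum|c_k|_{\mathrm{int}}=d$ and each $c_k$ a homogeneous element of $P$. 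Since $P$ has nothing in internal degree $0$ (as $A$ is connected graded, $P_1=P_0'$ and the augmentation kills degree $0$; more precisely each $c_k$ must have internal degree $\ge 1$), each factor contributes internal degree at least $1$, so $n\le d$. Thus $f_n(c)=0$ for all $n>d$, which is exactly local finiteness for the element $c$.

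The second sentence of the lemma, that $f$ lies in the image of $\mrm{Der}_\msc{Q}(T(P))\to\mrm{Der}_\msc{Q}(\widehat T(P))$, then follows formally: local finiteness of the generating tuple $(f_n)$ means the associated derivation restricts to a derivation of the dense subalgebra $T(P)$ (its restriction to generators lands in $T(P)\subset\widehat T(P)$, and a derivation is determined on products by Leibniz), and $f$ is the completion of that. I would phrase this exactly as in the discussion preceding Definition~\ref{def:discrete-cobar}, where the analogous statement is made for $d_\Omega$ when $C$ is locally finite.

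The main obstacle is bookkeeping rather than conceptual: I need the precise interaction between the shift $P=P'[-1]$, the internal grading, and the homological grading, and I must be careful that ``homogeneous with respect to the internal grading'' for a continuous endomorphism (defined via the truncations $\widehat T(P)/I^{n'}\to\widehat T(P)/I^n$) really does force each component $f_n$ to preserve internal degree on the nose, so that the degree-counting argument $\sum|c_k|_{\mathrm{int}}=|c|_{\mathrm{int}}$ is valid. Once that is pinned down, the ``each factor has internal degree $\ge 1$, hence $n\le |c|_{\mathrm{int}}$'' estimate closes the argument with essentially no computation.
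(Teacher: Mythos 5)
Your overall strategy---reduce to a single homogeneous generator $c$ and bound the tensor length $n$ of $f_n(c)$ by a degree count---is the same as the paper's, and your treatment of the second sentence (local finiteness of the restriction to generators implies $f$ comes from a derivation of the dense subalgebra $T(P)$) matches the paper as well. But the key step of your degree count contains a genuine error: the claim that ``$P$ has nothing in internal degree $0$'' is false. We have $P_1=P'_0$, and $P'_0$ surjects onto $A$, whose degree-$0$ part is $\kk\neq 0$; so $P'_0$ necessarily has a nonzero internal-degree-$0$ component (for the bar or minimal resolution, $P'_0=A\ot A$ contains $1\ot 1$). Condition (HF$'$) only gives that $P'_{-j}$ lives in internal degrees $\ge j$, which for $j=0$ permits degree $0$. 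Consequently a tensor $c_1\ot_A\cdots\ot_A c_n$ can have arbitrarily many factors of internal degree $0$ (all lying in $P_1$), e.g.\ $e_0\ot_A\cdots\ot_A e_0$ in Example~\ref{ex:poly-ring}, and the inequality $n\le\sum_k|c_k|_{\mathrm{int}}$ fails. Your estimate $n\le d$ therefore does not follow, and the argument as written does not close.

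The fix is to run the count on the \emph{sum} of the two gradings rather than on the internal grading alone, which is in effect what the paper does. By (HF$'$), a homogeneous $c_k\in P_{1-j}=P'_{-j}$ satisfies $|c_k|_{\mathrm{int}}\ge j=1-|c_k|_{\mathrm{hom}}$, i.e.\ $|c_k|_{\mathrm{int}}+|c_k|_{\mathrm{hom}}\ge 1$ for every nonzero homogeneous element of $P$ (you state this inequality yourself in the form ``homological degree $\ge 1-d$'' but then do not use it). Since $f_n(c)$ has fixed total homological degree $|c|_{\mathrm{hom}}+n_0$ and fixed total internal degree $|c|_{\mathrm{int}}+e$ (where $e$ is the internal degree of $f$ --- note the lemma only assumes $f$ homogeneous, not of internal degree $0$ as you assert; the paper calls this degree $d$), summing the inequality over the $n$ factors gives $n\le |c|_{\mathrm{int}}+e+|c|_{\mathrm{hom}}+n_0$, which is bounded as $c$ ranges over the finitely many generators of a fixed $P_{1-i}$. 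With that correction your proof coincides with the paper's.
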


\begin{proof}
Let $d$ denote the degree of $f$ with respect to the internal grading.  It suffices to show that for each $i$ there is a corresponding $N$ such that $f_k:P\to P^{\ot_A k}$ vanishes on $P_{-i+1}$ whenever $k>N$.  Let $l$ be the maximal internal degree of a generator for $P_{-i+1}$, i.e.\ the maximal nonvanishing degree of the reduction $k\ot_{A^e}P_{-i+1}$.  The restriction of $f_k$ to $P_{-i+1}$ corresponds to a degree $n-k+1$ map from the unshifted resolution $P_{-i}'\to (P')^{\ot_A k}$, which we also denote by $f_k$ by abuse of notation.  Since each $P'_{-j}$ is generated in internal degrees at least $j$, the space $\left((P')^{\ot_A k}\right)_{-i+n-k+1}$ vanishes in degrees less than $k+i-n-1$.  We can therefore take $N=d+l-i+n+1$ to find that $f_k|_{P'_{-i}}$ vanishes for all $k>N$.  Thus $f$ is locally finite.
\par

The local finiteness implies that $f|_P:P\to\widehat{T}(P)$ has image in $T(P)$, and thus defines a unique derivation $F:T(P)\to T(P)$ such that the completion $\hat{F}$ recovers $f$.
\end{proof}

In our construction of the $A_\infty$-coalgebra structure on $P$, in the proof of Proposition~\ref{Acostruc}, we are free to assume that all of the $\delta_i$ preserve the internal grading.  By the previous lemma, the $A_\infty$-coalgebra structure on $P$ will therefore be locally finite, and we may consider the discrete cobar construction $\Omega^\mrm{disc}P$ of Definition~\ref{def:discrete-cobar}.  
We have the completion map
\begin{equation}\label{eq:cmplt}
cmpl:\mrm{Der}_\msc{Q}(\Omega^\mrm{disc}P)\to \mrm{Der}_\msc{Q}(\Omega P),\ \ f\mapsto \hat{f},
\end{equation}
which is a map of dg Lie algebras.  The completion map is also a map of dg algebras, under the convolution product of Section~\ref{subsec:cupprod}.
\par

Let us take $\mrm{Coder}_{lf}^\infty(P)$ and $\mrm{Inn}^\infty_{lf}(P)$ to be the cocycles and coboundaries in $\mrm{Der}_\msc{Q}(\Omega^\mrm{disc}P)$, respectively.

\begin{theorem}\label{thm:lf}
For $A$ a connected graded algebra satisfying {\rm (HF)}, and $P$ a shifted resolution satisfying {\rm(HF$'$)}, the completion map~\eqref{eq:cmplt} is a quasi-isomorphism.  Consequently, there is an identification of Gerstenhaber algebras
\begin{equation}\label{eq:1461}
\HH^\ast(A)\cong \mrm{Coder}^\infty_{lf}(P)/\mrm{Inn}^\infty_{lf}(P)[1].
\end{equation}
\end{theorem}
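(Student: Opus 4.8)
The plan is to prove first that the completion map~\eqref{eq:cmplt} is a quasi-isomorphism; the displayed identification~\eqref{eq:1461} is then formal. Indeed $cmpl$ is a map of dg Lie algebras and, under the convolution multiplications $\{m_n\}$, a map of dg algebras, so the induced cohomology isomorphism $\Coder^\infty_{lf}(P)/\Inn^\infty_{lf}(P)\xrightarrow{\ \sim\ }\Coder^\infty_\mathscr{Q}(P)/\Inn^\infty_\mathscr{Q}(P)$ respects both the cup product and the bracket, and composing its inverse with the Gerstenhaber isomorphism of Theorem~\ref{mainthm} yields~\eqref{eq:1461}.

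For the quasi-isomorphism I would exploit the internal grading throughout. As observed just before the statement, we may choose the $A_\infty$-coalgebra structure $\delta$ on $P$ so that every $\delta_n$ is homogeneous of internal degree $0$; then the adjoint differentials $[d_\Omega,-]$, the convolution products, and the bracket on both $\mrm{Der}_\mathscr{Q}(\Omega^\mrm{disc}P)$ and $\mrm{Der}_\mathscr{Q}(\Omega P)$ preserve the internal grading, and $cmpl$ becomes a morphism of internally graded complexes which, in each fixed internal degree $e$, is an \emph{isomorphism} of complexes $\mrm{Der}_\mathscr{Q}(\Omega^\mrm{disc}P)^{(e)}\to\mrm{Der}_\mathscr{Q}(\Omega P)^{(e)}$: injectivity is immediate from $T(P)\subset\widehat T(P)$, and surjectivity is precisely Lemma~\ref{lem:lf}, which says an internally homogeneous continuous derivation of $\widehat T(P)$ is locally finite. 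The cleanest way to finish is then to re-run the constructions of Section~\ref{sec:main} keeping all choices internally homogeneous. A Hochschild cocycle splits, in each homological degree, into finitely many internally homogeneous cocycles (as $\HH^n(A)=\bigoplus_j\HH^n(A)_j$ with finite-dimensional summands, by~(HF)), so one may assume $f$ is internally homogeneous; in the induction of Theorem~\ref{lift_exist} the obstruction $\Phi_{\alpha_0,\dots,\alpha_{i-1}}$ is then internally homogeneous, being built from the homogeneous maps $\delta$, $\mu_P$, $\psi^r_t$, $\phi_t$ and the previously chosen homogeneous $\alpha_k$, hence its bounding morphism $\alpha_i$ can be taken internally homogeneous, and by Lemma~\ref{lem:lf} the resulting $\alpha_f$ lies in $\Coder^\infty_{lf}(P)$. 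The same reasoning upgrades Lemma~\ref{InnCoder}: if $\alpha\in\Coder^\infty_{lf}(P)$ has $\alpha_0\sim 0$ then the bounding derivation $\beta$ can be taken locally finite, so $\alpha\in\Inn^\infty_{lf}(P)$. Thus $f\mapsto\alpha_f$ descends to a bijection $F_{lf}\colon\HH^*(A)\to\Coder^\infty_{lf}(P)/\Inn^\infty_{lf}(P)[1]$ with $cmpl\circ F_{lf}=F$, the bijection of Theorem~\ref{mainthm}; since $F$ and $F_{lf}$ are bijections, $cmpl$ induces a bijection on cohomology, i.e.\ $cmpl$ is a quasi-isomorphism.

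The step I expect to be the main obstacle is exactly the passage from the internal-degreewise isomorphism to a statement about the full complexes: neither $\mrm{Der}_\mathscr{Q}(\Omega^\mrm{disc}P)$ nor $\mrm{Der}_\mathscr{Q}(\Omega P)$ is the direct sum, nor the direct product, of its internal-degree subcomplexes (in a fixed cohomological degree a derivation may have nonzero internal components in infinitely many internal degrees, one arising for each cohomological degree of $P$), so the re-running must be done with care precisely where a \emph{locally finite} $A_\infty$-coderivation fails to be internally homogeneous: one decomposes $\alpha$ into internally homogeneous parts (each locally finite, with the same bounds as $\alpha$), treats each by the homogeneous argument, and must then check that the homogeneous bounding derivations can be chosen so as to sum to a single locally finite one. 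This is where the finiteness hypothesis~(HF$'$), via Lemma~\ref{lem:lf}, is genuinely needed. An equivalent organization of the same content compares the two derivation complexes through the $\mfk{m}$-adic (tensor-length) filtration: $cmpl$ induces the identity $\Hom_\mathscr{Q}(P,P^{\ot_A p})=\Hom_\mathscr{Q}(P,P^{\ot_A p})$ on associated gradeds, hence an isomorphism on $E_1$-pages; the filtration on $\mrm{Der}_\mathscr{Q}(\Omega P)$ is complete, and~(HF$'$) together with Lemma~\ref{lem:lf} controls the filtration in each internal degree well enough for the spectral sequences to converge, so that $cmpl$ is a quasi-isomorphism.
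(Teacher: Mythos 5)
Your proposal is correct and follows essentially the same route as the paper: reduce to showing that the evaluation-at-$f_0$ maps from both derivation complexes to $\Hom_\mathscr{Q}(P,A)$ are quasi-isomorphisms, lift internally homogeneous cocycles via a homogeneous rerun of Theorem~\ref{lift_exist} so that Lemma~\ref{lem:lf} forces local finiteness, and handle the injectivity step by decomposing a locally finite coderivation into internally homogeneous components and bounding each compatibly. The only place the paper is more explicit is exactly the obstacle you flag: it records the concrete bound $N(i)=\max\{M(j):0\le j\le i\}$ on the internal degrees of the bounding components $\beta(k)|_{P(i)}$ to guarantee that they assemble into a single locally finite $\beta$.
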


\begin{proof}
We have the diagram
\[
\xymatrix{
\mrm{Der}_\msc{Q}(\Omega^\mrm{disc}P)\ar[rr]^{cmpl}\ar[dr]_{z_{lf}} & & \mrm{Der}_\msc{Q}(\Omega P)\ar[dl]^z\\
 &\Hom_\msc{Q}(P,A) &,
}
\]
where the maps to $\Hom_\msc{Q}(P,A)$ take $f$ to $f_0$.  The map $z$ is a quasi-isomorphism by Theorem~\ref{mainthm}.  (Or, more precisely, by Lemma~\ref{InnCoder} and Theorem~\ref{lift_exist}.)  To see that $cmpl$ is a quasi-isomorphism, it suffices to show that $z_{lf}$ is a quasi-isomorphism.  In this case, the identification~\eqref{eq:1461} will follow by Theorem~\ref{mainthm} and the fact that $cmpl$ is a dg (Lie) algebra map.
\par

To see that $z_{lf}$ is a quasi-isomorphism, we note that $\Hom_\msc{Q}^n(P,A)$ is spanned by homogeneous elements with respect to the internal grading, and for homogenous $\alpha_0\in \Hom_\msc{Q}^n(P,A)$ we can produce a lift $\alpha\in \mrm{Der}_\msc{Q}(\Omega P)$ which is also homogenous with respect to the internal degree.  Indeed, our construction of such a lift in the proof of Theorem~\ref{lift_exist} can be done in a homogeneous manner.  In this case $\alpha$ is locally finite, by Lemma~\ref{lem:lf}, and we therefore have a lift of $\alpha_0$ in $\mrm{Coder}^\infty_{lf}(P)$.  This shows that any cocycle in $\Hom_\msc{Q}(P,A)$ lifts to a cocycle in $\mrm{Der}_\msc{Q}(\Omega^\mrm{disc}P)$.
\par

To see that $z_{lf}$ is a quasi-isomorphism we must show that if $\alpha$ is a locally finite $A_{\infty}$-coderivation such that $\alpha_0$ is a coboundary, then $\alpha$ admits a locally finite bounding element $\beta$.
\par

Since each $P_{-i}$ is generated in finitely many internal degrees, any $f\in \mrm{Der}_{\mathscr{Q}}(\Omega P)$ can be written uniquely as a convergent sum $\sum_{k=-\infty}^\infty f(k)$ such that $f(k)$ is homogenous of internal degree $k$ and the composites $\pi_nf(k):P_{-i}\to \oplus_{l\le n}P^{\ot l}$ vanish for all but finitely many $k$.  We have $[\delta,\sum_k f(k)]=\sum_k[\delta,f(k)]$, and $[\delta,f(k)]=[\delta,f](k)$ since $\delta$ is homogenous of degree $0$.  So, an element $\alpha$ is bounded by some $\beta$ if and only if $\alpha(k)=[\delta,\beta(k)]$ for each $k$.
\par

Now, let $P(i)$ denote the homogeneous degree $i$ portion of $P$ with respect to the internal degree.  Since $P_{-i+1}$ is generated in a finite number of degrees which are at least $i$, we have $P(i)=\oplus_{l\le i+1} P_l(i)$, and one can conclude from Lemma~\ref{lem:lf} that a derivation $f$ is locally finite if and only if for each $i$ there is an integer $N(i)$ such that $f|_{P(i)}=\sum_{k=-N(i)}^{k=N(i)}f(k)|_{P(i)}$.  For a locally finite cocycle $\alpha$, we have $\alpha|_{P(i)}=\sum_{k=-M(i)}^{M(i)}\alpha(k)|_{P(i)}$ for some integers $M(i)$, and we claim that if $\alpha_0$ is a coboundary then there exists a bounding element $\beta$ for $\alpha$ such that
\[
\beta|_{P(i)}=\sum_{k=-N(i)}^{N(i)}\beta(k)|_{P(i)},\ \ \text{for }N(i)=\mrm{max}\{M(j):0\le j\le i\}.
\]
Indeed, one can use homogeneity of $\delta$ and freeness of $P$ to prove that such a $\beta=\sum_{n=1}^\infty\beta_n$ exists via induction on $n$ (cf.\ the proof of Lemma~\ref{InnCoder}).  Such a $\beta$ provides the desired locally finite bounding element.
\end{proof}

We next give a small example illustrating local finiteness.

\begin{example}\label{ex:poly-ring} 
Let $\kk$ be a field of arbitrary characteristic, and $A=\kk[x]$.
Let $K$ be the resolution of $A$ as an $A^e$-module given by
\[
   K: \hspace{2cm}   0\rightarrow A^e 
    \stackrel{\cdot (x\ot 1 - 1\ot x)}{\relbar\joinrel\relbar\joinrel
     \relbar\joinrel\relbar\joinrel\relbar\joinrel\longrightarrow}
        A^e .
\]
Then $K$ embeds into the bar resolution: 
Identify the degree~0 component $K_0 = A^e$ with that in the bar 
resolution~(\ref{ex:bar}), and in degree~1, send $e_1:= 1\ot 1$ in $K_1=A^e$
to $1\ot x\ot 1$ in $A^{\ot 3}$ in the bar resolution.
The coalgebra structure $\delta_2$ on the bar resolution restricts to
a coalgebra structure for $K$.
We find that
\begin{eqnarray*}
  \delta_2(e_0) & = & e_0\ot e_0 , \\
  \delta_2(e_1)& = &e_0\ot e_1 - e_1\ot e_0 .
\end{eqnarray*}
Let $n$ be any positive integer and consider the Hochschild 1-cocycle given by 
$\alpha_0(e_1)=x^n$ (and $\alpha_0(e_0)=0$).
Let 
\begin{eqnarray*}
\alpha_1(e_1) & = &  - \sum_{a+b = n-1} x^a e_1 x^b , \\
\alpha_2(e_1) & = & \sum_{a+b+c=n-2} x^a e_1\ot_A x^b e_1 x^c , \\
   \vdots     & &  \vdots  \\
\alpha_i(e_1)  & = & (-1)^i \sum_{a_1+\cdots a_{i+1}=n-i} x^{a_1}e_1
     \ot_A x^{a_2}e_1\ot_A \cdots\ot_A x^{a_i} e_1 x^{a_{i+1}} , \\
   \vdots  &&  \vdots \\
\alpha_n(e_1) &= & (-1)^n e_1\ot_A\cdots\ot_A e_1 , 
\end{eqnarray*}
$\alpha_m(e_0)=0$ for all $m$, 
and $\alpha_m(e_1)=0$ for all $m>n$. 
Lengthy calculations show that $(\alpha_i)$ is an $A_{\infty}$-coderivation;
see also the general formula~(\ref{eqn:general-Koszul}) below 
for Koszul algebras.
\end{example}

\subsection{Koszul algebras}\label{sec:Koszul}

Now we focus on Koszul algebras and 
show that the grading on such algebras appears in the structure
of $A_{\infty}$-coderivations corresponding to Hochschild cocycles.
These $A_{\infty}$-coderivations will all be locally finite,
that is, they take values in $\oplus_{k\geq 0} P^{\ot_A k}$,
by Lemma~\ref{lem:lf}. 
We recall  definitions of Koszul algebras and their resolutions next. 
Our Koszul algebras $A$ will always be graded and connected.
As discussed above, the grading on $A$ imposes an additional grading on
$\Ext^*_A(\kk,\kk)$ so that for each $i$, 
$\Ext^i_A(\kk,\kk)=\oplus_j \Ext^{i,j}_A(\kk,\kk)$.

By definition, $A$ is a {\em Koszul algebra} 
if $\Ext^{i,j}_A(\kk,\kk)=0$ whenever $i\neq j$.
This implies that $A$ is quadratic, that is all its relations are
in degree~2.
It is equivalent to exactness of the following sequence, so that it
is a projective $A^e$-module resolution of $A$~\cite{Kraehmer}:
Let $V = A_1$ and let $R \subset V\ot V$ be the subspace of 
quadratic relations, where $\ot = \ot_{\kk}$, 
so that $A\cong T_{\kk}(V)/(R)$.
The Koszul resolution is defined from this information:
\[
  K: \hspace{2cm}   \cdots \longrightarrow A\ot R\ot A\longrightarrow A\ot V\ot A
  \longrightarrow A\ot A
\]
with $K_0=A\ot A$, $K_1=A\ot V\ot A$, and for each $i\geq 2$,
$K_i=A\ot K_i'\ot A$ where
\[
   K_i' = \bigcap_{j+l=i-2} (V^{\ot j}\ot R\ot V^{\ot l} ) .
\]
Each $K_i$ embeds canonically into $A^{\ot (i+2)}$, and $K$
is in this way a subcomplex of the bar resolution of Example~\ref{ex:bar}.
It is shown in~\cite{BGSS} that through this embedding, there is a
coassociative diagonal map $\Delta_2$ on $K$.
This map may be modified by including some signs 
as in Example~\ref{ex:bar} to obtain a map $\delta_2$
on $K$: 
\[
   \delta_2(a_0\ot\cdots \ot a_{n+1}) = \sum_{i=0}^n (-1)^i
      (a_0\ot\cdots \ot a_i\ot 1)\ot _A (1\ot a_{i+1}\ot\cdots\ot a_{n+1})
\]
for all $a_0,\ldots, a_{n+1}\in A$ on the bar resolution (it may be checked that
$\delta_2$ in fact restricts to a map from $K_n$ to $K_{n-1}$). 
Take $\delta_i=0$ for all $i\geq 3$.

We set $P_i:=K_{1-i}$, so that $P$ is a shifted projective resolution
of $A$ as an $A^e$-module. 
We will use the $A_{\infty}$-coalgebra structure on $P$ as given by
the maps $\delta_n$  above.
In particular, we have $P_i=0$ and $\delta_i=0$ for $i>2$.



Note that the grading on $A$ induces a grading on $P$. Thus, $P$ has a $\bZ\times \bZ$-grading such that elements from $A_p\ot K_i'\ot A_q$ have degree $(i,p+i+q)$. 
A homogeneous element with respect to this grading can be represented by a map from $K_n'$ to $A_p$ for some integers $n$ and $p$. The corresponding element has degree $(n, p-n)$ and belongs to $\HH^n(A)$.

Let us take an element of $\HH^*(A)$ that is represented by a map from $K_n'$ to $A_p$ and construct $\alpha\in \Coder^{\infty}_\mathscr{Q}(P)$ representing it
following the prescription in the proof of Theorem~\ref{lift_exist}. 
Start with $\alpha_0$, a map from $P_{1-n}$ to $A$.
For each $k$, 
we construct $\alpha_k|_{P_{-i}}$ by induction on $i$.
For each $s\geq 0$, denote by $\pi_s : V^{\ot s}\rightarrow A_s$ the
canonical projection and let $\tau: V\rightarrow K_1$ be the 
canonical embedding. 

Let us set $\alpha_k|_{P_{-i}}=0$ for $i<n-1$. Let us now choose some vector space basis $x_1,\dots,x_l$ of $K_n'$. Represent $\alpha_0(x_t)$ by an element $y_t\in V^{\ot p}$, via a choice of
section $A_p\rightarrow V^{\ot p}$ of $\pi_p$.
We will use the canonical identification of $(K_1\ot A)\ot_A (A\ot K_1)$ with $K_1\ot A\ot K_1$.
Let us define $\alpha_k|_{P_{1-n}}$ by the equalities
\begin{equation}\label{eqn:general-Koszul}
\alpha_k(1\ot x_t\ot 1)=
\sum_{i_0+\cdots + i_k = p-k}
\! (-1)^k(\pi_{i_0}\ot_A \tau\ot_A \pi_{i_1}\ot_A \tau\ot_A\cdots\ot_A \tau\ot_A \pi_{i_{k}})y_t
\end{equation}
for $1\le t\le l$. 

\begin{prop}
Let $A$ be a Koszul algebra, with shifted Koszul resolution $P$, and let
$\alpha_0: P_{1-n}\rightarrow A$ be a Hochschild $n$-cocycle.
Then $\alpha_k$ ($k\ge 1$), as given above on $P_{-i}$ for $i\leq n-1$,
can be extended to components $P_{-i}$ with $i>n-1$ in such a way that $\alpha=\sum_{k\geq 0}\alpha_k$ is an $A_\infty$-coderivation.
\end{prop}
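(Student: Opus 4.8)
The plan is to reduce to the simplified recursion, verify the prescribed formulas are internally consistent on the low-degree part of $P$, and then extend to the rest of the resolution by the obstruction argument of Theorem~\ref{lift_exist}. Two features of the Koszul case make this tractable. Since the Koszul resolution carries the coassociative structure with $\delta_m=0$ for $m\ge 3$, the condition that $\alpha=\sum_{k\ge 0}\alpha_k$ be an $A_\infty$-coderivation is precisely equation~\eqref{eqn:coassoc-delta} for all $N$, and the equation for $\alpha_N$ then involves only $\alpha_N$, $\alpha_{N-1}$ and $\delta_2$. Moreover, $\delta_2$ restricted to the Koszul complex sends $K_m$ into $\bigoplus_{j+l=m}K_j\ot_A K_l$, so the right-hand side of~\eqref{eqn:coassoc-delta} evaluated on $P_{-i}$ involves only the restrictions of the maps $\alpha_k$ and $\alpha_{k-1}$ to $P_{-j}$ with $j\le i$. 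Hence one may build $\alpha$ by a double induction: an outer induction on $i$, defining all of $\alpha_k|_{P_{-i}}$ at the $i$-th stage, and for fixed $i$ an inner induction on $k$.

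For the base, $i\le n-1$, one checks directly that the maps of~\eqref{eqn:general-Koszul} satisfy~\eqref{eqn:coassoc-delta} on $P_{1-n}$ (on $P_{-i}$ with $i<n-1$ every term vanishes). This is a combinatorial identity inside tensor powers of $V=A_1$: write $\alpha_k(1\ot x_t\ot 1)$ as the signed sum over all ways of cutting the word $y_t\in V^{\ot p}$ into $k$ length-one separators, each hit by $\tau$, and $k+1$ blocks, each hit by a projection $\pi_{i_j}$. Applying $\delta_2$ and sorting by the location of the cut, a cut at an existing separator reproduces the terms $(1\ot_A\alpha_{k-1}+\alpha_{k-1}\ot_A 1)\delta_2$, while a cut inside a $\pi$-block reproduces the terms $-\sum(1^{\ot_A i}\ot_A\delta_2\ot_A 1^{\ot_A j})\alpha_{k-1}$; the factor $(-1)^k$ in~\eqref{eqn:general-Koszul} is exactly what makes the two families assemble into~\eqref{eqn:coassoc-delta}. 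Koszulity, in the form $K_m'=\bigcap_{j+l=m-2}V^{\ot j}\ot R\ot V^{\ot l}$, guarantees that the words produced by these cuts lie in the correct subspaces $K_m'$, so that $\delta_2$ is genuinely defined on $K$ at each step.

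For the inductive step fix $i\ge n$ and assume all $\alpha_k|_{P_{-j}}$ with $j<i$ have been defined so that~\eqref{eqn:coassoc-delta} holds through degree $i-1$; in the inner induction assume also that $\alpha_{k-1}|_{P_{-i}}$ has been defined (recall $\alpha_0|_{P_{-i}}=0$ since $i\ge n$). The equation for $\alpha_k|_{P_{-i}}$ reads $\partial(\alpha_k)|_{P_{-i}}=\Phi_k$, where $\Phi_k$ is the already-defined right-hand side of~\eqref{eqn:coassoc-delta}, and, as in the proof of Theorem~\ref{lift_exist}, $\Phi_k$ is a cocycle. Since $P_{-i}$ is a free $A^e$-module it suffices to define $\alpha_k|_{P_{-i}}$ on generators, i.e.\ to show $\Phi_k$ is a coboundary in the relevant local $\Hom$-complex, and for this it is enough to check after composing with a suitable power of $\mu_P$, as in the discussion preceding Lemma~\ref{lem:940}. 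When $k$ is odd this is Lemma~\ref{main_tec} (with $\delta_{\ge 3}=0$ the auxiliary maps $\psi_t^r$, $\phi_t$ degenerate, but the argument is unaffected); when $k$ is even, hence $k\ge 2$, one first adjusts the previously constructed component $\alpha_{k-1}|_{P_{-i}}$ by a cocycle $u=(1^{\ot_A(k-1)}\ot_A\mu_P)\Phi_k$, using Lemma~\ref{lem:940} and the relation $(1\ot_A\mu_P-\mu_P\ot_A 1)\delta_2\sim 2$, exactly as in the even case of Theorem~\ref{lift_exist}; this adjustment does not disturb the equations at levels $<k$ since $\partial(u)=0$, and $\alpha_{k-1}|_{P_{-i}}$ is not one of the prescribed components. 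This produces $\alpha_k|_{P_{-i}}$ for every $k$, completing the induction, and $\alpha=\sum_{k\ge 0}\alpha_k$ is the desired $A_\infty$-coderivation.

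The main obstacle is the base-case identity: correctly matching signs and index ranges when $\delta_2$ is applied to the insertion formula~\eqref{eqn:general-Koszul}, and checking that the resulting tensor words land in the subspaces $K_m'$ so that $\delta_2$ restricts to $K$ at each stage. Once this is established, the extension to higher $i$ is a routine adaptation of the obstruction argument of Theorem~\ref{lift_exist}, the only structural difference being that the induction is organized by the degree $i$ in the resolution rather than by the tensor degree $k$, so as to respect the prescribed low-degree components.
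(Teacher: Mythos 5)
Your base case is the same as the paper's proof and is where the real content lies: the paper likewise reduces to the recursion \eqref{eqn:coassoc-delta} (using $\delta_{\ge 3}=0$), evaluates $\partial(\alpha_k)$ on the generators $1\ot x_t\ot 1$ of $P_{1-n}$ using $\alpha_k|_{P_{-i}}=0$ for $i<n-1$, and matches the two sides as telescoping sums indexed by cuttings of the word $y_t\in V^{\ot p}$. Your description of the cut-location bookkeeping is a faithful outline of that computation, and your observation that the whole problem is an iterated lifting governed only by $\alpha_{k-1}$ and $\delta_2$ is correct.

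The extension step, however, is both over-engineered and, as written, contains a flawed move. Once the components on $P_{-j}$ for $j\le n-1$ are prescribed and verified, the remaining problem is an ordinary comparison-theorem lift: one must solve $\delta_1^{\ot_A k}\,\alpha_k|_{P_{-i}} = \Phi_k|_{P_{-i}} + (-1)^{|\alpha|}\alpha_k|_{P_{-(i-1)}}\delta_1$ for $i\ge n$, and the right-hand side is a cycle landing in $(P^{\ot_A k})_{n-i}$ with $n-i\le 0 < k$, where $P^{\ot_A k}$ is exact. So the lift exists with no obstruction, no appeal to Lemma~\ref{main_tec}, and no parity distinction; this is exactly the mechanism the paper uses in Proposition~\ref{prop:Pleq0} (``$P_{\le 0}^{\ot_A k}$ is exact in negative degrees''). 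The machinery of Theorem~\ref{lift_exist} (composing with $\mu_P^{\ot_A k}$, the even-case replacement of $\alpha_{k-1}$ by $\alpha_{k-1}+u$) is designed to produce the \emph{top} component of $\alpha_k$, where exactness fails --- but here that top component is prescribed by \eqref{eqn:general-Koszul} and already verified. Worse, importing the even-case adjustment into your inner induction at fixed $i$ is not innocuous: replacing $\alpha_{k-1}|_{P_{-i}}$ by $\alpha_{k-1}|_{P_{-i}}+u$ changes $\partial(\alpha_{k-1})|_{P_{-i}}$ by $\delta_1^{\ot_A(k-1)}u$, so the equation $\partial(\alpha_{k-1})|_{P_{-i}}=\Phi_{k-1}|_{P_{-i}}$ that you established at the previous inner step survives only if $\delta_1^{\ot_A(k-1)}u=0$, which you neither check nor can expect in general (the global identity $\partial(u)=0$ used in Theorem~\ref{lift_exist} does not localize to a single resolution degree). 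Dropping the parity cases and replacing them with the exactness argument above repairs the proof and brings it in line with the paper's.
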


\begin{proof}
As above, we first define $\alpha_k \mid_{P_{-i}} =0$ for $i< n-1$
and $\alpha_k\mid _{P_{1-n}}$ by~(\ref{eqn:general-Koszul}).
We will check that such $\alpha_k$ satisfies the equality
$$\partial(\alpha_k)|_{P_{-i}}=\left.\left((-1)^{n+1}(\alpha_{k-1}\otimes_A 1+1\otimes_A\alpha_{k-1})\delta_2-\sum\limits_{i=0}^{k-2}(1^{\ot_Ai}\ot_A\delta_2\ot_A1^{\ot_A(k-2-i)})\alpha_{k-1}\right)\right|_{P_{-i}}$$
for $i\le n-1$. 
Indeed, 
the left hand side evaluated on $x_t$ in $K_n'$ (that is, on $1\ot x_t\ot 1$ in $P_{1-n}$) is 
\[
\begin{aligned}
 & \sum_{i+j = k-1} (1^{\ot _A i}\ot_A\delta_1 \ot_A 1^{\ot_A j}) \alpha_k(1\ot x_t\ot 1)
       + (-1)^n \alpha_k \delta_1(1\ot x_t\ot 1)\\
 &=\sum_{i+j=k-1}  (1^{\ot_A i}\ot_A \delta_1\ot_A 1^{\ot_A j}) \sum_{i_0+\cdots + i_k=p-k}
   (-1)^k (\pi_{i_0}\ot_A\tau\ot_A\cdots\ot_A\tau\ot_A \pi_{i_k}) y_t\\ 
   &\hspace{4cm}+ (-1)^n \alpha_k(x_t\ot 1 + (-1)^n\ot x_t) .
\end{aligned}
\]
Now $\alpha_k|_{K_{n-1}} = 0$, so $ \alpha_k(x_t\ot 1) = 0 = \alpha_k( 1\ot x_t)$.
The above expression may now be rewritten as 
\[
   \sum\limits_{\scriptsize\begin{array}{c}i_0+\cdots + i_k = p-k\\ j\in\{ 0,\ldots, k-1\}\end{array}}
    (-1)^k (\pi_{i_0}\ot_A\tau_{i_0}\ot_A\cdots \ot_A\delta_1\tau_{i_j}\ot_A\cdots\ot_A
    \pi_{i_k} ) y_t 
\]
where $\tau_{i_l} = \tau$ for all $l$.
The right hand side evaluated on $x_t$ in $K_n'$ is 
\[
\begin{aligned}
 & (-1)^{n+1} (\alpha_{k-1}\ot_A 1 + 1\ot_A \alpha_{k-1})
  ((1\ot 1)\ot_A (1\ot x_t\ot 1) + (-1)^n (1\ot x_t\ot 1)\ot_A (1\ot 1))\\
& - \sum_{i=0}^{k-2} (1^{\ot _A i}\ot_A \delta_2\ot_A 1^{\ot_A (k-2-i)})
   \! \sum_{i_0+\cdots + i_{k-1} = p-k+1}  (-1)^{k-1}
   (\pi_{i_0} \ot_A\tau\ot_A\cdots\ot_A \tau\ot_A \pi_{i_{k-1}}) y_t \\
& = \sum_{i_0+\cdot + i_{k-1}=p-k+1} 
   (-1)^k (\pi_{i_0}\ot_A \tau\ot_A\cdots\ot_A\tau\ot_A\pi_{i_{k-1}}) y_t\ot_A (1\ot 1)\\
   &\hspace{.4cm} + \sum_{i_0+\cdots + i_{k-1}=p-k+1} (-1)^{k-1} (1\ot 1)\ot_A (\pi_{i_0}\ot_A
   \tau\ot_A\cdots\ot_A \tau\ot_A\pi_{i_{k-1}})y_t \\
 & \hspace{.4cm}+ \sum_{\substack{i_0+\cdots + i_{k-1} =p-k+1\\ j\in\{0,\ldots,k-2\}}}
    (-1)^k (\pi_{i_0}\ot_A \tau_{i_0}\ot_A\cdots\ot_A \delta_2\tau_{i_j}\ot_A
  \cdots\ot_A \pi_{i_{k-1}}) y_t  . 
\end{aligned}
\]
The first two sums above also appear within the third, with opposite signs,
and so they cancel.
Comparing the left hand side obtained previously 
with what remains of the right hand side here, since
\begin{eqnarray*}
  \delta_1(1\ot v\ot 1) & = & v\ot 1 - 1\ot v \ \ \mbox{ and } \\
  \delta_2(1\ot v\ot 1) & = &  (1\ot 1)\ot_A (1\ot v\ot 1)
    - (1\ot v\ot 1)\ot_A (1\ot 1) , 
\end{eqnarray*}
each is a telescoping sum in which there is further cancellation,
with remaining terms coinciding. 
Thus we have shown that $\alpha_k$ satisfies, on such $P_{-i}$, 
the condition to be the 
$k$-th component of an $A_{\infty}$-coderivation.
\end{proof}


We use these observations in the next result.

\begin{prop}\label{prop:Pleq0}
Let $A$ be a Koszul algebra, with shifted Koszul resolution $P$,
and let $\alpha_0$ be a Hochschild cocycle mapping $P_{1-n}$ to $A_p$. 
The corresponding $A_{\infty}$-coderivation 
$\alpha$ defined by~(\ref{eqn:general-Koszul}) satisfies 
$\alpha_k(P)\subset P_{\le 0}^{\ot_Ak}$ for all $k\ge 1$.
\end{prop}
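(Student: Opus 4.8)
The plan is to induct on the homological degree, carrying a bihomogeneity hypothesis. Because $A$ is Koszul, the generators $K_m'$ of the Koszul resolution sit in internal degree exactly $m$, so the structure maps $\delta_1=d_P$ and $\delta_2$ are homogeneous for the $(\mathrm{homological},\mathrm{internal})$ bigrading $P$ inherits from $A$; consequently the defining system~\eqref{eqn:coassoc-delta} of an $A_\infty$-coderivation is homogeneous, and one may require each component $\alpha_k$ to be bihomogeneous. Such an $\alpha_k$ necessarily maps $P_{-i}$, i.e.\ $K_{i+1}$, into $\bigoplus_{m_1+\cdots+m_k=i+1+k-n}K_{m_1}\otimes_A\cdots\otimes_A K_{m_k}$; for $i\le n-1$ this is the zero map or the map~\eqref{eqn:general-Koszul}, in which all $m_j=1$, so the base of the induction already lands in $P_0^{\otimes_A k}\subseteq P_{\le0}^{\otimes_A k}$. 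The statement to propagate is that all $m_j\ge1$, that is, $\alpha_k(P_{-i})\subseteq P_{\le0}^{\otimes_A k}$.

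For the step, fix $i>n-1$ and assume the claim for smaller $i$ and, at the present $i$, for the components $\alpha_j$ with $j<N$ (the components being built in order of increasing $j$, and each one by increasing homological degree). Equation~\eqref{eqn:coassoc-delta} writes $\alpha_N|_{P_{-i}}$ as a $\partial$-primitive of a cocycle $\Psi$ assembled from $\delta_1,\delta_2$ and the components already in hand. First one checks $\Psi\in P_{\le0}^{\otimes_A N}$: the only summands of~\eqref{eqn:coassoc-delta} that can place a tensor factor in $P_1=K_0$ are those in which $\delta_2$ hits an outermost slot, and each such summand either evaluates some $\alpha_j$ on $P_1$ — which is zero, since $P_1$ lies strictly above the support $P_{1-n}$ of $\alpha_0$ and, inductively, above the support of every $\alpha_j$ — or is cancelled by a partner summand under the Koszul sign rule. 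This is the analogue of the telescoping cancellations in the proof of the preceding proposition and is the bulk of the computation.

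It then remains to choose the $\partial$-primitive of $\Psi$ inside $P_{\le0}^{\otimes_A N}$, and here is the mechanism I would use. Split $d_{P^{\otimes_A N}}=d'+d''$, where $d''$ is the part that carries a $P_0$ tensor factor into $P_1$; since $\Psi$ is clean, $0=d_{P^{\otimes_A N}}\Psi=d'\Psi+d''\Psi$ forces $d''\Psi=0$. Let $Z\subseteq P_{\le0}^{\otimes_A N}$ be the subspace of clean tensors killed by $d''$. Then $(Z,d')$ is a subcomplex of $(P^{\otimes_A N},d_{P^{\otimes_A N}})$ (again from $d^2=0$), the two differentials agree on $Z$, and $\Psi\in Z$; so any $d'$-primitive of $\Psi$ inside $Z$ is a genuine $\partial$-primitive lying in $P_{\le0}^{\otimes_A N}$. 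Finally, $(Z,d')$ is acyclic in the homological range we need: filtering $P^{\otimes_A N}$ by the number of $P_0$ (and $P_1$) tensor factors and using that the tail syzygy $\ker(\mu_P)$ is projective over $A$ on each side (the counit $A^e\to A$ splitting as a one-sided module map), the homology of the associated graded — hence of $Z$ — is concentrated in homological degrees bounded in terms of $N$ alone, which the degree $i+1+N-n$ of a primitive of $\Psi$ exceeds for $i$ beyond a bounded range. Thus the required bihomogeneous primitive exists in $P_{\le0}^{\otimes_A N}$, and the induction closes.

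I expect the two delicate points to be, first, the sign bookkeeping that makes $\Psi$ clean, and second, the identification and acyclicity of the subcomplex $(Z,d')$ in high degree; the latter is the genuine obstacle, since a careless argument produces only a primitive modulo $P_1$-factors, and it is precisely the subcomplex structure of $Z$ — forced by $d^2=0$ together with $d''\Psi=0$ — that upgrades such a primitive to an honest one. If this route proves too heavy, the fallback is to write down a closed concatenation formula extending~\eqref{eqn:general-Koszul} to every $K_m$ (reparsing words of $K_m'$ via the embedding $\tau\colon V\to K_1$), for which membership in $P_{\le0}^{\otimes_A k}$ is visible by inspection, and to verify~\eqref{eqn:coassoc-delta} for it by the same kind of telescoping computation.
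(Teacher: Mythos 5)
Your overall strategy coincides with the paper's: induct, use bihomogeneity, observe that the explicit formula~\eqref{eqn:general-Koszul} lands in $P_0^{\ot_A k}$, show by a telescoping cancellation that the right-hand side of the defining equation for $\partial(\alpha_k)$ is ``clean'' (no $K_0=P_1$ tensor factors), and then solve for a clean primitive using exactness of $P_{\le 0}^{\ot_A k}$ in negative degrees. To your credit, you isolate the one genuinely delicate point, which the paper's proof passes over in a single sentence: since $d_{P^{\ot_A k}}$ pushes $P_0$-factors into $P_1$, the graded submodule $P_{\le 0}^{\ot_A k}$ is \emph{not} a subcomplex of $P^{\ot_A k}$, so ``exact in negative degrees'' refers to the truncated differential $d'$ and a $d'$-primitive of $\Psi$ need not be a $d$-primitive. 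Your subcomplex $Z=\{x\in P_{\le 0}^{\ot_A k}: d''x=0\}$ is the right object (it is indeed a subcomplex, $d=d'$ on it, and $\Psi$ lands in it), and your observation that the two minor points are the sign bookkeeping and the acyclicity of $Z$ is accurate.

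The gap is precisely in the acyclicity of $(Z,d')$. First, $Z$ is not an associated graded piece of your filtration by the number of top-degree factors: it is the kernel of the degree-one map $d''$ from one graded piece to the next, and extracting $H_\ast(Z)$ from the homology of the associated graded requires a further long-exact-sequence or spectral-sequence comparison that you do not supply. Second, and more seriously, your own conclusion (``concentrated in homological degrees bounded in terms of $N$ alone, which \dots exceeds for $i$ beyond a bounded range'') only covers sufficiently large $i$, whereas the induction needs every $i\ge n$. The very first step $i=n$ already places the cycle $\Psi$ in homological degree $0$ of $P_{\le 0}^{\ot_A k}$, where the truncated complex is \emph{not} exact (its degree-zero homology is $\Omega^{\ot_A k}$ for $\Omega=\ker\mu_P$); there one must argue that the specific class of $\Psi$ vanishes, using that $\Psi$ already bounds in the acyclic complex $\ker(\mu_P)^{\ot_A k}$, rather than appeal to exactness. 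So the argument as written fails exactly where the work is, for small $i$. (A complete treatment should also record that the replacement of $\alpha_k$ performed for odd $k$ in the proof of Theorem~\ref{lift_exist} preserves cleanness, as the paper notes; your proposal omits this, but it is a minor check.)
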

\begin{proof}
For $k=1$ the required condition is satisfied automatically:
Specifically, we may define $\alpha_1$ via~(\ref{eqn:general-Koszul}) 
on $P_{-i}$ for $i\le n-1$ as above and extend to $P_{-i}$
for $i>n-1$ where degree conditions force its image to be in $P_{\le 0}$. 
On the $k$-th step we need to construct $\alpha_k$ in such a way that
$$\partial(\alpha_k)=(-1)^{n+1}(\alpha_{k-1}\otimes 1+1\otimes\alpha_{k-1})\delta_2-\sum\limits_{i=0}^{k-2}(1^{\ot_Ai}\ot_A\delta_2\ot_A1^{\ot_A(k-2-i)})\alpha_{k-1}.$$
Assume that $\alpha_{k-1}(P)\subset P_{\le 0}^{\ot_A(k-1)}$. Then it is easy to show using the formulas for $\delta_2$ that the right hand side of the last equality lies in $P_{\le 0}^{\ot_Ak}$. Since we have already constructed $\alpha_k|_{P_{1-n}}$ in such a way that $\alpha_k(P_{1-n})\subset \left(P_{\le 0}^{\ot_Ak}\right)_0$ and $P_{\le 0}^{\ot_Ak}$ is exact in negative degrees, we can produce $\alpha_k$ in such a way that $\alpha_k(P)\subset P_{\le 0}^{\ot_Ak}$.

Note that if $2\nmid k$, then during the construction of $\alpha_{k+1}$ as
in Theorem~\ref{lift_exist}, we replace $\alpha_k$ by
$$
\alpha_k+\left(1^{\ot_Ak}\ot_A\mu_P\right)\left(\sum\limits_{i=0}^{k-1}(1^{\ot_Ai}\ot_A\delta_2\ot_A1^{\ot_A(k-1-i)})\alpha_{k}+(-1)^n(\alpha_{k}\otimes_A 1+1\otimes_A\alpha_{k})\delta_2\right),
$$
but one can show that this does not change our conditions.
\end{proof}

\begin{rema}
By Proposition~\ref{prop:Pleq0} for Koszul algebras, we obtain 
an element $\alpha\in \Coder^{\infty}_\mathscr{Q}(P)$ such that $\alpha_k(P)\subset P_{\le 0}^{\ot_Ak}$ and all the components of $\alpha$ are homogeneous of degree $p-n$ as $A$-bimodule homomorphisms.
In particular, we have $\alpha_k=0$ for $k>p$ and $\alpha_k|_{P_{-i}}=0$ for $i<n-1$. For an element of $\HH^*(A)$ represented by a map from $K_n'$ to $A_p$ then, we have constructed maps $\alpha_p|_{\oplus_{i\ge n}K_i'}:\oplus_{i\ge n}K_i'\rightarrow \left(\oplus_{i\ge 1}K_i'\right)^{\ot p}$. The necessary inclusion $\alpha_p(\oplus_{i\ge n}K_i')\subset \left(\oplus_{i\ge 1}K_i'\right)^{\ot p}$ follows from a degree argument.
Note that in fact $\oplus_{i\ge 0}K_i'$ has the structure of dg coalgebra $C$, called the Koszul dual coalgebra of $A$. The map $\alpha$ induces a map from $C$ to the $p$-th component of the bar resolution of $C$. One can show that this map is a cocycle and that this cocycle is a coboundary if and only if $\alpha_0$ is a coboundary. Moreover, if $\beta\in \Coder^{\infty}_\mathscr{Q}(P)$ is another coderivation corresponding to an element of $\HH^*(A)$ represented by a map from $K_m'$ to $A_p$, then the formula for the bracket $[\alpha,\beta]_{p+q}$ is exactly the formula for the Gerstenhaber bracket of the elements $\alpha_p$ and $\beta_q$. This gives a new proof of the fact that $\HH^*(A)$ and $\HH^*(C)$ are isomorphic as Gerstenhaber algebras (cf.~\cite{Keller-derived}). We do not give the details of the proof here because this fact is well known.
\end{rema}

\bibliographystyle{abbrv}

\end{document}